\documentclass{article}


\usepackage{amsmath}
\usepackage{amssymb}
\usepackage{amsthm}
\usepackage[all]{xy}


\swapnumbers
\theoremstyle{plain}
\newtheorem{thm}{Theorem}[section]
\newtheorem{lem}[thm]{Lemma}
\newtheorem{cor}[thm]{Corollary}
\newtheorem{prop}[thm]{Proposition}

\theoremstyle{definition}

\newtheorem{rem}[thm]{Remark}
\newtheorem{dfn}[thm]{Definition}
\newtheorem{notation}[thm]{Notation}






\title{Rigidity theorem for presheaves with $\Omega$-transfers}

\date{}

\author{Alexander Neshitov}


\begin{document}

\maketitle

\begin{abstract}
In 1983 A.~Suslin proved the Quillen-Lichtenbaum conjecture about algebraic $K$-theory of algebraically closed fields. The proof was based on a theorem called the Suslin rigidity
theorem.
In the present paper we prove the rigidity theorem for homotopy
invariant presheaves with $\Omega$-transfers, introduced by
I.~Panin. This type of presheaves includes the $K$-functor and algebraic
cobordism of M.~Levine and F.~Morel.

Keywords: rigidity theorem, presheaves with transfers,
  cohomology theories.  MSC2000: 14F43

\end{abstract}

\section{Introduction}
The original rigidity theorem of A.~Suslin can be formulated as
follows \cite{Suslin}:

Let $k_0\subseteq k$ be an extension of algebraically closed fields.
Then for every prime $p$ different from $\text{char } F$, the canonical map
\[
K_n(k_0,\mathbb{Z}/p)\to K_n(k,\mathbb{Z}/p)
\]
is an isomorphism, where $K_n$ is the $n$-th algebraic $K$-theory.
In later works of H.~Gillet, R.~Thomason~\cite{Gillet Thomason} and O.~Gabber~\cite{Gabber}
analogous results were obtained for the strict Henselization of a local
ring of a closed point on a smooth variety, and for Henselian pairs.
Further steps were made by I.~Panin, S.~Yagunov and J. Hornbostel (\cite{Panin Yagunov}, \cite{Yagunov}, \cite{Hornbostel Yagunov}), and O.~R\"ondings and P.~\O stv\ae r~\cite{Rondings
Ostvaer}.
All known proofs are based on the existence of transfers and on homotopy invariance of $K$-groups.
In the paper by A.~Suslin and V.~Voevodsky~\cite{Suslin Voevodsky}, $K$-groups were replaced by
algebraic singular homology groups. Their main result states that
for a variety over the complex numbers, the algebraic singular homology groups with
finite coefficients
coincide with usual singular homology.

The following conjecture about complex cobordism, due to V.~Voevodsky, is of special interest:
\[
MGL^{i,0}(\mathbb{C})/nMGL^{i,0}(\mathbb{C})=\Omega^U_{-2i}/n\Omega^U_{-2i},\quad
\]
where $MGL$ denotes the algebraic cobordism spectrum theory and $\Omega^U$ denotes the complex cobordism.

Recently G.~Garkusha and I.~Panin have developed an approach to solve
this conjecture, which requires a rigidity theorem for the so-
called homotopy-invariant presheaves $\mathcal{F}$ with $\Omega$-transfers, as well as
the fact that $\mathcal{F}(S)=\mathcal{F}(\text{Spec } k)$, where $S$ is a Henselization (at
a closed point) of a smooth variety over $k$.

In the present paper we prove the rigidity theorem for homotopy
invariant presheaves with $\Omega$-transfers. Namely, we prove the following

\begin{thm}
Let $k$ be an algebraically closed field, $y\in Y$ be a closed point
of a nonsingular variety over $k$,  and $S=\text{Spec } \mathcal{O}_{Y,y}^{et}$ a spectrum of a
local ring in \'etale topology.
Let $\mathfrak{X}/S$ be a smooth affine curve over $S$ which admits a good
compactification (see \ref{def_of_good_compactification}) and let $\mathcal{F}$ be a homotopy invariant
presheaf with $\Omega$-transfers such that $n\mathcal{F}=0$, where $(n,
\text{char } k)=1$.

Then for any two sections $x,y\colon S\to\mathfrak{X}$ coinciding in the
closed point of $S$,
the two maps $\mathcal{F}(x),\mathcal{F}(y)\colon \mathcal{F}(\mathfrak{X})\to\mathcal{F}(S)$ coincide.
\end{thm}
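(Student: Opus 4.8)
The plan is to recast Suslin's original argument in terms of the relative Picard group of the good compactification. Write $\bar{\mathfrak{X}}/S$ for the good compactification of $\mathfrak{X}$ (see \ref{def_of_good_compactification}), put $\bar{\mathfrak{X}}_\infty=\bar{\mathfrak{X}}\setminus\mathfrak{X}$ for its boundary, and let $s\in S$ denote the closed point; since $k$ is algebraically closed, $S$ is strictly Henselian with residue field $k$. To a section $x\colon S\to\mathfrak{X}$ I associate the closed subscheme $D_x:=x(S)\subset\mathfrak{X}$, which is finite over $S$ (in fact isomorphic to $S$ via the structure map) and disjoint from $\bar{\mathfrak{X}}_\infty$; together with the tautological trivialisation of $\mathcal{O}_{\bar{\mathfrak{X}}}(D_x)$ on the complement of $D_x$, this defines a class $\langle x\rangle$ in the relative Picard group $\operatorname{Pic}(\bar{\mathfrak{X}},\bar{\mathfrak{X}}_\infty)$, and similarly $\langle y\rangle$. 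As $x$ and $y$ are sections, $\langle x\rangle$ and $\langle y\rangle$ have the same fibrewise degree, so the difference $\langle x\rangle-\langle y\rangle$ lies in the subgroup $J(S)$ classifying degree-zero classes, where $J$ is the relative generalised Jacobian, a smooth commutative group scheme over $S$.

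The first main step is geometric: $\langle x\rangle-\langle y\rangle$ is divisible by $n$ in $\operatorname{Pic}(\bar{\mathfrak{X}},\bar{\mathfrak{X}}_\infty)$. Indeed, since $x(s)=y(s)$, the point $\langle x\rangle-\langle y\rangle\in J(S)$ reduces to the identity in $J(s)$, i.e.\ it lies in $K:=\ker\!\big(J(S)\to J(s)\big)$. Because $S$ is Henselian local and $n$ is invertible on $S$, the subgroup $K$ of points reducing to the identity section is uniquely divisible by $n$ — a standard property of the formal completion along the identity section of a smooth group scheme over a Henselian local ring — so there is $w\in\operatorname{Pic}(\bar{\mathfrak{X}},\bar{\mathfrak{X}}_\infty)$ with $\langle x\rangle-\langle y\rangle=n\,w$.

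The second step is the technical heart of the proof: turning classes in the relative Picard group into maps $\mathcal{F}(\mathfrak{X})\to\mathcal{F}(S)$. Using the $\Omega$-transfer structure of $\mathcal{F}$ I construct an additive pairing
\[
\Phi\colon\operatorname{Pic}(\bar{\mathfrak{X}},\bar{\mathfrak{X}}_\infty)\otimes_{\zz}\mathcal{F}(\mathfrak{X})\longrightarrow\mathcal{F}(S).
\]
Every class in $\operatorname{Pic}(\bar{\mathfrak{X}},\bar{\mathfrak{X}}_\infty)$ is a difference of classes of divisors $D\subset\mathfrak{X}$ that are finite over $S$ and disjoint from $\bar{\mathfrak{X}}_\infty$ (a moving argument over the local base $S$, using that $\mathfrak{X}$ is affine), so it is enough to set $\Phi(D\otimes-)$ to be the composite of the restriction $\mathcal{F}(\mathfrak{X})\to\mathcal{F}(D)$ with the $\Omega$-transfer $\mathcal{F}(D)\to\mathcal{F}(S)$ along the finite morphism $D\to S$, the framing data demanded by the $\Omega$-transfer being supplied by the conormal bundle of $D$ inside $\bar{\mathfrak{X}}$ — this is the point at which the good compactification enters. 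The real content is well-definedness, namely that $\Phi(D\otimes-)$ depends only on the class of $D$ in $\operatorname{Pic}(\bar{\mathfrak{X}},\bar{\mathfrak{X}}_\infty)$. Given a relative linear equivalence $D\sim D'$ witnessed by a rational function on $\bar{\mathfrak{X}}$ congruent to $1$ near $\bar{\mathfrak{X}}_\infty$, one spreads it out to a divisor $\mathcal{D}\subset\bar{\mathfrak{X}}\times\mathbb{A}^1$ that is finite over $S\times\mathbb{A}^1$, disjoint from $\bar{\mathfrak{X}}_\infty\times\mathbb{A}^1$, and restricts to $D$ over $S\times\{0\}$ and to $D'$ over $S\times\{1\}$; additivity and base change for $\Omega$-transfers produce from $\mathcal{D}$ an element of $\mathcal{F}(S\times\mathbb{A}^1)$ whose two endpoint restrictions are $\Phi(D\otimes a)$ and $\Phi(D'\otimes a)$, and the homotopy invariance of $\mathcal{F}$ forces these to agree. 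One checks finally, straight from the definition of the $\Omega$-transfer along the graph of a morphism, that $\Phi(\langle x\rangle\otimes-)=\mathcal{F}(x)$ and $\Phi(\langle y\rangle\otimes-)=\mathcal{F}(y)$.

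Granting the two steps, the theorem is immediate: for every $a\in\mathcal{F}(\mathfrak{X})$,
\[
\mathcal{F}(x)(a)-\mathcal{F}(y)(a)=\Phi\!\big((\langle x\rangle-\langle y\rangle)\otimes a\big)=\Phi(n\,w\otimes a)=n\cdot\Phi(w\otimes a)=0,
\]
since $n\mathcal{F}=0$ kills $\mathcal{F}(S)$, whence $\mathcal{F}(x)=\mathcal{F}(y)$. I expect the divisibility in the first step to be essentially classical. The genuine obstacle is the second step — making the $\Omega$-transfer formalism yield a \emph{well-defined} pairing on the relative Picard group, in particular handling the framings attached to divisors (including their behaviour near the boundary) and ensuring that the spreading-out of a relative rational equivalence stays finite over $S\times\mathbb{A}^1$ — which is precisely what the notion of good compactification is designed to make possible.
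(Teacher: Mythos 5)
Your overall architecture matches the paper's: reduce the theorem to a statement in the relative Picard group via an additive pairing $Pic(\overline{\mathfrak{X}},\mathfrak{X}_{\infty})\otimes\mathcal{F}(\mathfrak{X})\to\mathcal{F}(S)$ built from $\Omega$-transfers along divisors finite over $S$, with well-definedness coming from homotopy invariance. Your sketch of the second step is exactly the mechanism of the paper's Lemma~\ref{6}: a relative linear equivalence is realized by a morphism to $\mathbb{P}^1_S$ (equivalently, a divisor finite over $\mathbb{A}^1_S$ interpolating between the two divisors), and $\mathcal{F}(i_0)=\mathcal{F}(i_\infty)=\mathcal{F}(p)^{-1}$ forces the two transfers to agree. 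What you defer --- that every class is represented by divisors which are disjoint unions of sections, and that the witnessing rational function can be chosen to extend to a morphism $\overline{\mathfrak{X}}\to\mathbb{P}^1_S$, \'etale over $0$ and $\infty$ and sending $\mathfrak{X}_{\infty}$ to $1$ --- is precisely what the paper's Sections~\ref{generic position results}--\ref{picard group} (Bertini-type general position, Lemmas~\ref{3}, \ref{4}, \ref{5}) provide. One small correction: the paper's $\Omega$-transfers carry no framing data (a correspondence is just a finite flat surjective l.c.i.\ map together with an arbitrary second leg), so the conormal-bundle framings you worry about are not an issue.

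Where you genuinely diverge is the first step. The paper never mentions the relative Jacobian: it uses the Suslin--Voevodsky injection $Pic(\overline{\mathfrak{X}},\mathfrak{X}_{\infty})/n\hookrightarrow H^2_{et}(\overline{\mathfrak{X}},j_!\mu_n)$ together with proper base change over the Henselian local $S$ to get $Pic(\overline{\mathfrak{X}},\mathfrak{X}_{\infty})/n\hookrightarrow Pic(\overline{X},X_{\infty})/n$, where the hypothesis $x(0)=y(0)$ makes the two classes literally equal. Your route --- $\langle x\rangle-\langle y\rangle$ lies in the kernel of reduction $J(S)\to J(k)$ and that kernel is $n$-divisible --- is in the spirit of Suslin's original argument and should work over the strictly Henselian $S$, but as stated it has concrete gaps: (i) you need the degree-zero part of the honest relative Picard group to be identified with $J(S)$ for a smooth (semiabelian) group scheme $J/S$, i.e.\ representability of the relative Picard functor with modulus $\mathfrak{X}_{\infty}$ plus a comparison over $S$, neither of which is free and neither of which the paper needs; (ii) the justification via ``the formal completion along the identity'' is not valid over a Henselian, non-complete base --- the correct argument is that $[n]$ on $J$ is \'etale since $n$ is invertible, the fibre of $[n]$ through $\langle x\rangle-\langle y\rangle$ has a $k$-point on the closed fibre (the identity, using $x(0)=y(0)$), and Henselianness lifts it to a section; note that only divisibility, not unique divisibility, is needed, and the resulting $n$-th root must still be shown to come from $Pic(\overline{\mathfrak{X}},\mathfrak{X}_{\infty})$ itself rather than from the functor. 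The paper's cohomological route buys exactly the avoidance of these representability and comparison questions at the cost of quoting SV~2.2 and proper base change; your route, once (i)--(ii) are supplied, keeps the argument on the level of group schemes and is closer to Suslin's original rigidity proof.
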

As a consequence we prove the following
\begin{cor}
For the closed point embedding $\imath\colon \text{Spec } k \to S$,
the induced pullback map $\mathcal{F}(\imath)\colon\mathcal{F}(S)\to\mathcal{F}(k)$ is an isomorphism.
\end{cor}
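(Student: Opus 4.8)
The plan is to deduce the corollary from Theorem 1.1 by a retraction argument together with an induction on $d=\dim Y$; throughout I write $f^{*}=\mathcal{F}(f)$ for a morphism $f$. First, since $k$ is algebraically closed the residue field of $y$ is $k$, so $S=\text{Spec }\mathcal{O}_{Y,y}^{et}$ is a $k$-scheme and the structure morphism $\pi\colon S\to\text{Spec }k$ is a retraction of $\imath$, i.e.\ $\pi\circ\imath=\mathrm{id}$. Applying $\mathcal{F}$ gives $\imath^{*}\circ\pi^{*}=\mathrm{id}_{\mathcal{F}(k)}$, so $\mathcal{F}(\imath)$ is split surjective and $\mathcal{F}(\pi)$ split injective; hence it suffices to prove that $\mathcal{F}(\imath)$ is injective, and then $\mathcal{F}(\pi)$ is automatically its two-sided inverse. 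I will prove the following by induction on $d$: for every nonsingular variety $Y$ of dimension $d$ over $k$ with closed point $y$, the map $\mathcal{F}(\imath)$ attached to $S=\text{Spec }\mathcal{O}_{Y,y}^{et}$ is injective. The case $d=0$ is trivial, since there $S=\text{Spec }k$ and $\imath=\mathrm{id}$.

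For the inductive step, shrink $Y$ so that there is an étale morphism $Y\to\mathbb{A}^{d}_{k}$ with $y\mapsto 0$ (possible as $y$ is a smooth $k$-rational point and $k=\bar k$); this changes neither $S$ nor the statement. Since an étale morphism induces an isomorphism of étale-topology local rings at points with residue field $k$, we get $S\cong\text{Spec }\mathcal{O}_{\mathbb{A}^{d}_{k},0}^{et}$, and writing $\mathbb{A}^{d}_{k}=\mathbb{A}^{1}_{k}\times_{k}\mathbb{A}^{d-1}_{k}$ and $S'=\text{Spec }\mathcal{O}_{\mathbb{A}^{d-1}_{k},0}^{et}$, the same principle applied to the pro-étale map $\mathbb{A}^{1}_{S'}\to\mathbb{A}^{d}_{k}$ gives $S\cong\text{Spec }\mathcal{O}_{\mathbb{A}^{1}_{S'},0}^{et}$. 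Let $r\colon S\to S'$ be the morphism induced by $\mathbb{A}^{1}_{S'}\to S'$; it carries the closed point of $S$ to that of $S'$, so $\mathcal{F}(\imath_{S})\circ\mathcal{F}(r)=\mathcal{F}(\imath_{S'})$, which is an isomorphism by the inductive hypothesis applied to $(\mathbb{A}^{d-1}_{k},0)$. Thus it is enough to prove that $\mathcal{F}(r)\colon\mathcal{F}(S')\to\mathcal{F}(S)$ is an isomorphism.

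The zero section of $\mathbb{A}^{1}_{S'}\to S'$ passes through the closed point $0$ of $S$, hence, by the universal property of the étale-topology local ring $\mathcal{O}_{S}$ (using that $\mathcal{O}_{S'}$ is strictly Henselian), factors as a morphism $\tau\colon S'\to S$ with $r\circ\tau=\mathrm{id}_{S'}$; so $\mathcal{F}(r)$ is split injective. For surjectivity, use continuity of $\mathcal{F}$ to write $\mathcal{F}(S)=\varinjlim_{\beta}\mathcal{F}(V_{\beta})$ over affine étale neighborhoods $V_{\beta}$ of $0$ in $\mathbb{A}^{1}_{S'}$; every element of $\mathcal{F}(S)$ is $\kappa^{*}c$ for some $V=V_{\beta}$ with structure map $\rho\colon V\to S'$ and canonical map $\kappa\colon S\to V$ (so $\rho\circ\kappa=r$) and some $c\in\mathcal{F}(V)$. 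The point $\tilde y_{V}=\kappa\circ\imath_{S}$ is a $k$-rational point of the closed fibre of $V/S'$, so by Hensel's lemma $\rho$ has a section $\sigma\colon S'\to V$ through $\tilde y_{V}$. Now $V\times_{S'}S$ is a smooth affine curve over $S$, admitting a good compactification obtained by base change from one of $V/S'$ (itself extracted, via relative normalization, from $\mathbb{P}^{1}_{S'}\supset\mathbb{A}^{1}_{S'}$), and $\delta_{1}=(\kappa,\mathrm{id}_{S})$, $\delta_{2}=(\sigma\circ r,\mathrm{id}_{S})$ are two sections of $(V\times_{S'}S)/S$ coinciding at the closed point of $S$. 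Theorem 1.1 gives $\mathcal{F}(\delta_{1})=\mathcal{F}(\delta_{2})$; composing with the pullback along the first projection $V\times_{S'}S\to V$ yields $\kappa^{*}c=(\sigma\circ r)^{*}c=r^{*}(\sigma^{*}c)$. Hence $\mathcal{F}(r)$ is surjective, so an isomorphism, and the induction goes through.

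The part I expect to be delicate is not the diagram chasing but the geometry supporting it. First, one must know that the good-compactification hypothesis of Theorem 1.1 survives the operations used — étale localization inside $\mathbb{A}^{1}_{S'}$, relative normalization over $\mathbb{P}^{1}_{S'}$, and base change along $r$ — so that the curves $V\times_{S'}S$ to which the theorem is applied genuinely admit good compactifications in the sense of Definition \ref{def_of_good_compactification}; controlling the fibres of such compactifications over the (regular, local) base seems to be where the real work lies, and is presumably why the paper isolates a careful notion of good compactification. Second, on the foundational side, one needs continuity of $\mathcal{F}$, i.e.\ $\mathcal{F}(\text{Spec }\mathcal{O}_{Y,y}^{et})=\varinjlim_{\beta}\mathcal{F}(V_{\beta})$ over étale neighborhoods, to legitimize the reduction to the finite level $c\in\mathcal{F}(V)$. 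Granting these, the corollary is a formal consequence of Theorem 1.1, Hensel's lemma, and the retraction $\pi$.
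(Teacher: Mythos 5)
Your overall skeleton is the same as the paper's: split the problem via the retraction $\pi\circ\imath=\mathrm{id}$, induct on the dimension, reduce to $S\cong\text{Spec }\mathcal{O}_{\mathbb{A}^d_k,0}^{et}$, write $\mathcal{F}(S)$ as a filtered colimit over \'etale neighbourhoods, and then use the rigidity theorem for a relative curve with two sections agreeing at the closed point (one the canonical section, one factoring through $S_{d-1}$ via a Hensel-lemma section of the projection) to push the image of a finite level into the image of $\mathcal{F}(S_{d-1})$. But there is a genuine gap at exactly the point you flag and then wave away: you must know that the curves to which you apply Theorem 1.1 admit a good compactification in the sense of Definition~\ref{def_of_good_compactification}, and your proposed mechanism does not supply it. For an arbitrary affine \'etale neighbourhood $V$ of $0$ in $\mathbb{A}^1_{S'}$, the relative normalization of $\mathbb{P}^1_{S'}$ in $V$ is finite over $\mathbb{P}^1_{S'}$, but it need not be smooth over $S'$ (normalization does not behave well in families over a higher-dimensional local base), and the complement $\overline{V}\setminus V$ need not be finite and \'etale over $S'$ (it can meet the preimage of $\mathbb{A}^1_{S'}$, since $V\to\mathbb{A}^1_{S'}$ is not finite, and it can be ramified or non-flat over $S'$). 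Producing good compactifications for a cofinal system of neighbourhoods is precisely the content of a presentation lemma of Gabber/Artin type; it is not a formality, and fixing the coordinate projection $\mathbb{A}^d\to\mathbb{A}^{d-1}$ in advance, as you do, is in general not compatible with it.

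The paper resolves this by a different organization of the same idea: instead of localizing inside $\mathbb{A}^1_{S'}$, it takes an affine \'etale neighbourhood $W$ of $(\mathbb{A}^d_k,0)$ and invokes \cite{Panin Stavrova Vavilov}, Prop.~3.3, which (after shrinking $W$ to an open $U$) produces a projection $p\colon U\to U'\subseteq\mathbb{P}^{d-1}_k$, an open embedding $U\subseteq\overline{U}$ with $\overline{U}\to U'$ smooth projective of relative dimension $1$, and boundary $Y=\overline{U}\setminus U$ finite \'etale over $U'$; the good compactification of $\mathfrak{X}=U\times_{U'}S$ over $S$ is then obtained by base change, and the auxiliary map $\pi\colon S\to S_{d-1}$ is built from this fibration rather than from the standard projection. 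So the relative-curve direction is chosen by the presentation result, not fixed beforehand. A secondary, smaller point: your use of ``continuity'' to write $\mathcal{F}(S)=\varinjlim_\beta\mathcal{F}(V_\beta)$ over neighbourhoods $V_\beta$ of $0$ in $\mathbb{A}^1_{S'}$ needs a word of justification, since each $V_\beta$ is itself only a pro-object of $\text{\bf{Sm}}_k$ and $\mathcal{F}$ is extended to pro-objects by Definition~\ref{def_of_presheaf_on_proobjects}, so a double-colimit/cofinality argument is required; the paper sidesteps this by taking the colimit over \'etale neighbourhoods of $(\mathbb{A}^d_k,0)$, which are honest smooth $k$-varieties. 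With a presentation lemma substituted for your normalization step, your argument would go through and would essentially coincide with the paper's proof.
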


The present paper is organized as follows.
In section~\ref{presheavestr} we introduce the notion of a homotopy invariant presheaf
with $\Omega$-transfers.
In sections~\ref{generic position results} and~\ref{picard group} we provide preliminary results which will be used in
the proof of the main theorem. In section~\ref{main result} we prove the rigidity
theorem and its corollary. In the last section~\ref{examples} we provide non-trivial examples of
homotopy invariant presheaves with $\Omega$-transfers.

\section{Presheaves with $\Omega$-transfers}\label{presheavestr}

In the present section we introduce the notion of a presheaf with $\Omega$-transfers.
This will take several steps.

Consider the base field $k$.
Denote by $\text{\bf{Sch}}_k$ the category of separated schemes of finite type over $k$, and by $\text{\bf{Sm}}_k$ its full subcategory of smooth quasi-projective schemes over $k.$
\begin{dfn}\label{def_of_hat}
By an elementary triple $X\stackrel{f}\leftarrow Z\stackrel{g}\to Y$ we call a diagram
$$\xymatrix{
  &Z\ar[ld]_{f}\ar[rd]^{g} \\
  X && Y
}
$$
where $X,Y\in\text{\bf{Sm}}_k$ $Z\in\text{\bf{Sch}}_k$ is a scheme over $k$ and $f\colon Z\to X$
is finite flat surjective locally complete intersection (l.c.i) morphism of $k$-schemes and $g$ is an arbitrary morphism of $k$-schemes.
\end{dfn}

\begin{rem}\label{remark} Since $X$ is smooth over $k$, the structure morphism $Z\to\text{Spec } k$ is an l.c.i. morphism, so $Z$ is an l.c.i. scheme. This implies that $g$ is also an l.c.i.
morphism (\cite{Fulton}, B.7)
\end{rem}

\begin{dfn}\label{def_of_equiv_hats}
Two triples $X\stackrel{f}\leftarrow Z\stackrel{g}\to Y$ and $X\stackrel{f'}\leftarrow Z'\stackrel{g'}\to Y$ are equivalent, iff there is an isomorphism $\alpha\colon Z\to Z'$ such
that the following diagram commutes:
$$\xymatrix{
  &Z\ar[ld]_{f}\ar[rrd]^{g}\ar[r]^{\alpha}& Z'\ar[lld]_{f'}\ar[rd]^{g'} \\
  X &&& Y
}
$$
Denote by $T(X,Y)$ the set of equivalence classes of elementary triples $(X\leftarrow Z\to Y)$.
Further we will mean by a triple an element of $T(X,Y)$.
\end{dfn}
\begin{dfn}\label{def_of_composition}
Composition of triples $X\stackrel{f_1}\leftarrow Z_1\stackrel{g_1}\to Y$ and $Y\stackrel{f_2}\leftarrow Z_2\stackrel{g_2}\to W$ is defined as the triple
 $X\stackrel{f_1\circ\pi_1}\longleftarrow Z_1\times_Y Z_2\stackrel{g_2\circ\pi_2}\longrightarrow W$ where $\pi_i\colon Z_1\times_Y Z_2\to Z_i$ is the corresponding projection,
 $i=1,2.$
\end{dfn}

\begin{dfn}
A sum of two triples $X\stackrel{f}\leftarrow Z\stackrel{g}\to Y$ and $X\stackrel{f'}\leftarrow Z'\stackrel{g'}\to Y$ is defined as the triple $X\stackrel{f\coprod f'}\longleftarrow
Z\coprod Z'\stackrel{g\coprod g'}\longrightarrow Y$
\end{dfn}
Note that sum is compatible with  the composition of triples.
Note that $X\leftarrow\emptyset\to Y$ is a zero element and the set $T(X,Y)$ can be endowed with a structure of a commutative monoid.

\begin{dfn}
$\Omega$-category is a category $\mathcal{C}$ defined as follows:
$Ob \ \mathcal{C}$ are smooth varieties over $k$ and for any two smooth
varieties $X$ and $Y$ we define $Hom_{\mathcal{C}}(X,Y)=G(T(X,Y))$ to be the Grothendieck group of the commutative monoid $T(X,Y)$.
\end{dfn}
Note that $\Omega$-category $\mathcal{C}$ is a preadditive category with direct sum given by disjoint union, since $Hom_{\mathcal{C}}(X_1\coprod X_2,Y)=Hom_{\mathcal{C}}(X_1,Y)\oplus Hom_{\mathcal{C}}(X_2,Y).$

\begin{rem}
There is an inclusion $\text{\bf{Sm}}_k\to \mathcal{C}$ sending each morphism $f\colon X\to Y$ to the element of $Hom_{\mathcal{C}}(X,Y)$ defined by the triple $(X\stackrel{id}\leftarrow X\stackrel{f}\to Y).$
\end{rem}

\begin{dfn}\label{def_of_omega_presheaf}
A presheaf with $\Omega$-transfers (or, shorter, $\Omega$-presheaf) is an additive functor $\mathcal{F}\colon\mathcal{C}^{op}\to \text{\bf{Ab}}$ such that the canonical map $$\mathcal{F}(X\coprod
Y)\to\mathcal{F}(X)\oplus\mathcal{F}(Y)$$ is an isomorphism.
\end{dfn}

\begin{dfn}\label{def_of_presheaf_on_proobjects}
For any $\Omega$-presheaf $\mathcal{F}$ and any $\text{\bf{Sm}}_k$-pro-object $Y=\lim_i X_i$, $X_i\in \text{\bf{Sm}}_k$,
define $\mathcal{F}(Y)=\mathop{\rm colim}_i \mathcal{F}(X_i)$.
\end{dfn}

\begin{dfn}\label{def_of_homotopy_invariance}
We call $\Omega$-presheaf $\mathcal{F}$ homotopy invariant iff for any $X\in \text{\bf{Sm}}_k$
the canonical map $\mathcal{F}(p)\colon\mathcal{F}(X)\to\mathcal{F}(X\times\mathbb{A}^1_k)$ is an isomorphism, where $p\colon X\times\mathbb{A}^1_k\to X$ is the projection.
\end{dfn}

\begin{dfn}\label{def_of_good_compactification}
 Let $S$ be a local scheme, $\mathfrak{X}\to S$ be an element of $\text{\bf{Sm}}_S$ of relative dimension 1.
We will say that $\mathfrak{X}$ admits a good compactification, if there is an open embedding $\mathfrak{X}\subseteq\overline{\mathfrak{X}}$, where $\pi:\overline{\mathfrak{X}}\to S$ is a smooth $S$-projective scheme of relative dimension
1, and $\mathfrak{X}_{\infty}=\overline{\mathfrak{X}}\setminus\mathfrak{X}$ is finite and \'etale over $S.$
\end{dfn}

\section{Some generic position results}\label{generic position results}
In this section we provide some generic position results needed for the section~\ref{picard group}.
The results of this section are derived mostly from the Bertini theorem.

\begin{notation}\label{notation}
Throughout this section we will use the following notation:
\begin{itemize}
\item $k$ is an algebraically closed field,
\item $S=\text{Spec }\mathcal{O}$ is a regular Henselian local scheme over $k$ such that the residue field of $\mathcal{O}$ coincides with $k$,
\item $0$ denotes the closed point of $S$,
\item $\mathfrak{X}\in\text{\bf{Sm}}_S$ is a scheme of relative dimension 1, such that
  $\mathfrak{X}$ admits a good compactification $\pi:\overline{\mathfrak{X}}\to S$ (see~\ref{def_of_good_compactification}) and denote $\mathfrak{X}_{\infty}=\overline{\mathfrak{X}}\setminus\mathfrak{X}$,
\item $X=\mathfrak{X}\times_S 0$ and $\overline{X}=\overline{\mathfrak{X}}\times_S 0$.
\end{itemize}
\end{notation}

\begin{lem}\label{1}
For any $a\in\mathcal{O}$ and $f\in\mathcal{O}[z_1,\ldots z_n]$ denote by $\overline{a}$ the residue of $a$ in $k$ and by $\overline{f}$ the residue of $f$ in $k[z_1,\ldots z_n]$.
Let $x_1,\ldots x_k\colon S\to \mathbb{A}^n_S$ be $S$-points of $\mathbb{A}^n_S$ such that $\overline{x_1},\ldots, \overline{x_1}$ are pairwise distinct in $\mathbb{A}^n_k$, $f\in k[z_1,\ldots,z_n]$ and
$b_1,\ldots,b_k\in \mathcal{O}$ such that $f(\overline{x_i})=\overline{b_i}$ for all $1\leqslant i\leqslant k.$

Then there is $F\in \mathcal{O}[z_1,\ldots,z_n]$ such that $\overline{F}=f$ and $F(x_i)=b_i.$
\end{lem}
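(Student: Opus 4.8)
The statement is essentially an interpolation/lifting problem: given a polynomial $f$ over the residue field $k$ and prescribed values $b_i \in \mathcal{O}$ at lifted points $x_i$, find a lift $F$ of $f$ realizing those values. The plan is to first fix an arbitrary lift and then correct it by adding a polynomial that vanishes appropriately. First I would pick any $F_0 \in \mathcal{O}[z_1,\dots,z_n]$ with $\overline{F_0} = f$ (e.g.\ lift each coefficient of $f$ to $\mathcal{O}$ arbitrarily). Setting $c_i = b_i - F_0(x_i) \in \mathcal{O}$, we have $\overline{c_i} = \overline{b_i} - f(\overline{x_i}) = 0$, so each $c_i$ lies in the maximal ideal $\mathfrak{m}$ of $\mathcal{O}$. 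It then suffices to find $G \in \mathfrak{m}[z_1,\dots,z_n]$ with $G(x_i) = c_i$ for all $i$ and $\overline{G} = 0$ (the latter being automatic once the coefficients of $G$ lie in $\mathfrak{m}$), and take $F = F_0 + G$.

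To build such a $G$, I would use a Lagrange-style interpolation adapted to the fact that the $S$-points $x_i$ need only be distinct modulo $\mathfrak{m}$. Write $x_i = (x_{i,1},\dots,x_{i,n})$ with $x_{i,j}\in\mathcal{O}$. Since $\overline{x_1},\dots,\overline{x_k}$ are pairwise distinct in $\mathbb{A}^n_k$, for each pair $i\neq \ell$ there is a coordinate $j(i,\ell)$ with $\overline{x_{i,j}} \neq \overline{x_{\ell,j}}$, hence $x_{i,j} - x_{\ell,j} \in \mathcal{O}^\times$ (a nonzero element of $\mathcal{O}$ with nonzero residue is a unit in the local ring). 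Actually it is cleaner to reduce to the one-point case: by additivity in the target values it is enough to produce, for each fixed index $i$, a polynomial $e_i \in \mathcal{O}[z_1,\dots,z_n]$ with $e_i(x_i) = 1$ and $e_i(x_\ell) \in \mathfrak{m}$ for $\ell \neq i$ — then
\[
G = \sum_{i=1}^{k} c_i\, e_i
\]
has $G(x_i) = c_i + \sum_{\ell \neq i} c_\ell\, e_\ell(x_i) \equiv c_i \pmod{\mathfrak{m}^2}$, which is not yet exactly $c_i$; so instead I would take the standard Lagrange form
\[
e_i = \prod_{\ell \neq i} \frac{z_{j(i,\ell)} - x_{\ell,j(i,\ell)}}{x_{i,j(i,\ell)} - x_{\ell,j(i,\ell)}},
\]
where each denominator is a unit in $\mathcal{O}$, so $e_i \in \mathcal{O}[z_1,\dots,z_n]$, and one checks directly $e_i(x_i) = 1$ while $e_i(x_\ell) = 0$ for $\ell \neq i$ since the factor with that particular $\ell$ vanishes. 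Then $G = \sum_i c_i e_i$ satisfies $G(x_i) = c_i$ exactly, and since every $c_i \in \mathfrak{m}$ all coefficients of $G$ lie in $\mathfrak{m}$, hence $\overline{G} = 0$.

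Putting $F = F_0 + G$ gives $\overline{F} = \overline{F_0} + \overline{G} = f + 0 = f$ and $F(x_i) = F_0(x_i) + c_i = F_0(x_i) + (b_i - F_0(x_i)) = b_i$, as required. The only point that needs care — and the one I would flag as the crux — is the passage from "residues distinct" to "differences are units"; this is exactly where Henselianness/locality of $\mathcal{O}$ is used, though in fact only the local property is needed here: an element of a local ring whose image in the residue field is nonzero is a unit. Everything else is the classical Lagrange interpolation argument carried out over $\mathcal{O}$ rather than a field, which goes through verbatim once the relevant denominators are invertible.
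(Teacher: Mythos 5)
Your proposal is correct, and it takes a noticeably different route from the paper. The paper's proof first changes coordinates so that the first coordinates $\overline{x_{i(1)}}$ of the reduced points are pairwise distinct, writes $f=\sum k_\lambda z^\lambda$, lifts every coefficient arbitrarily except those of $1,z_1,\ldots,z_1^{k-1}$, and then determines these $k$ remaining coefficients by solving a $k\times k$ linear system whose matrix is the Vandermonde matrix in the $x_{i(1)}$; its determinant has nonzero residue, hence is a unit in the local ring $\mathcal{O}$, and a final uniqueness argument for the reduced system is needed to conclude that $\overline{F}=f$. Your argument --- take an arbitrary lift $F_0$ of $f$, note that $c_i=b_i-F_0(x_i)$ lies in the maximal ideal $\mathfrak{m}$, and correct by a Lagrange polynomial $G=\sum_i c_i e_i$ whose denominators $x_{i,j(i,\ell)}-x_{\ell,j(i,\ell)}$ are units --- hinges on the same single fact (an element of a local ring with nonzero residue is a unit; as you say, Henselianness is not needed), but it buys two simplifications: no change of coordinates is required, since for each pair of points it suffices that \emph{some} coordinate separates the residues and different pairs may use different coordinates, and the equality $\overline{F}=\overline{F_0}+\overline{G}=f$ is automatic because all coefficients of $G$ lie in $\mathfrak{m}$, so no analogue of the paper's concluding check on residues of the solution is needed. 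Your brief detour about $e_i(x_\ell)\in\mathfrak{m}$ only giving $G(x_i)\equiv c_i \pmod{\mathfrak{m}^2}$ is harmless, since you correctly discard it in favour of the exact Lagrange form with $e_i(x_\ell)=0$ for $\ell\neq i$.
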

\begin{proof}
Write $x_i=(x_{i(1)} \ldots x_{i(n)}).$ Since all $\overline{x_i}$ are
pairwise distinct, we may assume that all $\overline{x_{i(1)}}$ are
pairwise distinct (otherwise we may change the coordinate system).
For a multi-index $\lambda=(\lambda_1,\ldots \lambda_n)$ denote by $z^{\lambda}=z_1^{\lambda_1}\ldots z_n^{\lambda_n}.$ Then we can write $f=\sum k_{\lambda}z^{\lambda}$.
Condition $f(x_i)=\overline{b_i}$ means that
$\sum k_{\lambda}\overline{x_i}^{\lambda}=\overline{b_i}$.

Let us find $a_{\lambda}$ in $\mathcal{O}$ such that
$\overline{a_{\lambda}}=k_{\lambda}$ for any multi-index $\lambda$ and
for any natural $i\leqslant k$ we have
$\sum a_{\lambda}x_i^{\lambda}=b_i$.
This is a linear system for $a_{\lambda}.$ We rewrite it in the
following way
$$a_{0\ldots 0}+a_{1\ldots 0}x_{i(1)}+\ldots + a_{k-1\ldots 0}x_{i(1)}^{k-1}=b_i-\sum_{\lambda\notin \{(j,\ldots 0)|0\leqslant j<k\}}a_{\lambda}x_i^{\lambda}\eqno{(1)}$$
Now for any $\lambda\notin \{(j,\ldots 0)\mid 0\leqslant j<k\}$ we set
$a_{\lambda}$ to be any lift of $k_{\lambda}.$ Thus we get a linear
system for $a_{0 \ldots 0}$, $a_{1,0 \ldots 0}$, $\ldots$, $a_{k-1,0\ldots 0}$
with the square matrix
$$A=\begin{pmatrix}
1,x_{1(1)},x_{1(1)}^2 \ldots x_{1(1)}^{k-1}\\
\ldots\\
1,x_{k(1)},x_{k(1)}^2 \ldots x_{k(1)}^{k-1}\\
\end{pmatrix}$$
This is Vandermonde matrix, $\det A=\prod_{i<j}(x_{i(1)}-x_{j(1)}).$
Since all $\overline{x_{i(1)}}$ are pairwise distinct, $\det A$ does not lie in the maximal ideal of local ring $\mathcal{O}.$
Then $A$ is invertible. Then there exist a unique solution $a_{0 \ldots 0},a_{1,0 \ldots 0} \ldots,a_{k-1,0\ldots 0}$ of the linear system $(1).$

\noindent Note that then $\overline{a_{0 \ldots 0}},\overline{a_{1,0 \ldots 0}}
\ldots,\overline{a_{k-1,0\ldots 0}}$ is a solution of the system
$$\overline{a_{0\ldots 0}}+\overline{a_{1\ldots 0}}~\overline{x_{i(1)}}+\ldots + \overline{a_{k-1\ldots 0}}~\overline{x_{i(1)}}^{k-1}=\overline{b_i}-\sum_{\lambda\notin \{(j,\ldots
0)|0\leqslant j<k\}}\overline{a_{\lambda}}~\overline{x_i}^{\lambda}$$ with invertible matrix, so it coincides with another solution
 $$\overline{a_{\scriptscriptstyle 0 \ldots 0}}=k_{\scriptscriptstyle
   0 \ldots 0},\; \overline{a_{\scriptscriptstyle 1,0 \ldots
     0}}=k_{\scriptscriptstyle 1,0 \ldots 0},\; \ldots,\; \overline{a_{\scriptscriptstyle k-1,0\ldots 0}}=k_{\scriptscriptstyle k-1,0\ldots 0}.$$
Thus, a polynomial $F=\sum a_{\lambda}z^{\lambda}$ satisfies the
desired properties.
\end{proof}

\begin{cor}\label{corollary1}
Let $f,g\in k[t_0,\ldots,t_n]$ and $G\in \mathcal{O}[t_0,\ldots,t_n]$ be homogeneous polynomials of degree $d$.
Suppose $g=\overline{G}$. Consider a set $x_1,\ldots,x_r\colon S\to\mathbb{P}^n_S$ of $S$-points in $\mathbb{P}^n_S$
and corresponding set $\overline{x_1},\ldots,\overline{x_r}\colon \text{Spec } k\to\mathbb{P}^n_k$ of
$k$-points in $\mathbb{P}^n_k$
Suppose $f(\overline{x_i})=g(\overline{x_i})$ for
all $1\leqslant i\leqslant r$.

Then there is a homogeneous polynomial $F\in\mathcal{O}[t_0,\ldots,t_n]$ of degree $d$ such that $\overline{F}=f$ and $F(x_i)=G(x_i)$.
\end{cor}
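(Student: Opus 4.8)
\emph{Proof proposal.} The plan is to deduce the corollary from the affine Lemma~\ref{1} by passing to a well-chosen affine chart of $\mathbb{P}^n$, working throughout with the difference $F-G$. Writing $F=G+H$ and using $\overline{G}=g$, the two requirements $\overline{F}=f$ and $F(x_i)=G(x_i)$ become $\overline{H}=h$ and $H(x_i)=0$, where $h:=f-g$ is a homogeneous form of degree $d$ in $k[t_0,\dots,t_n]$ (or the zero form); moreover the hypothesis $f(\overline{x_i})=g(\overline{x_i})$ says precisely that $h(\overline{x_i})=0$ for every $i$. (All these evaluations are independent of the chosen homogeneous representatives, since every polynomial in sight is homogeneous of the one fixed degree $d$.) If $h=0$ we are done with $F=G$; otherwise it remains to lift a nonzero degree-$d$ form $h$ vanishing at the $\overline{x_i}$ to a degree-$d$ form $H\in\mathcal{O}[t_0,\dots,t_n]$ with $\overline{H}=h$ and $H(x_i)=0$.

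Next I would dehomogenize. As $k$ is algebraically closed, hence infinite, and the $\overline{x_i}$ are finitely many points, there is a linear form $\ell$ with $\ell(\overline{x_i})\ne0$ for all $i$ (the hyperplanes through the $\overline{x_i}$ do not exhaust the dual projective space). Extending $\ell$ to a $k$-linear coordinate system, lifting that change of coordinates to $\mathcal{O}$ (its determinant is a unit, hence invertible over $\mathcal{O}$), and renaming, I may assume $\ell=t_0$; then each $\overline{x_i}$ lies in the chart $\{t_0\ne0\}$, so $x_{i,0}$ is a unit in $\mathcal{O}$ and after rescaling I may take $x_{i,0}=1$. Put $\tilde x_i:=(x_{i,1},\dots,x_{i,n})\in\mathbb{A}^n_S(S)$ and $\tilde h(z):=h(1,z_1,\dots,z_n)$, a polynomial of degree $\le d$ with $\tilde h(\overline{\tilde x_i})=0$. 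Applying Lemma~\ref{1} to $\tilde h$, the $S$-points $\tilde x_i$ (whose reductions $\overline{\tilde x_i}$ I take to be pairwise distinct, the regime in which the lemma applies), and the constants $b_i=0$, I obtain $\tilde H\in\mathcal{O}[z_1,\dots,z_n]$ with $\overline{\tilde H}=\tilde h$ and $\tilde H(\tilde x_i)=0$. Provided $\deg\tilde H\le d$, rehomogenizing to degree $d$, i.e.\ setting $H:=t_0^{d}\,\tilde H(t_1/t_0,\dots,t_n/t_0)$, produces a homogeneous form of degree $d$ over $\mathcal{O}$ with $\overline{H}=h$ (rehomogenization to degree $d$ inverts dehomogenization by Euler's relation) and $H(1,x_{i,1},\dots,x_{i,n})=\tilde H(\tilde x_i)=0$; undoing the coordinate change and taking $F:=G+H$ then concludes.

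The step I expect to be the real obstacle is exactly the proviso $\deg\tilde H\le d$. A black-box application of Lemma~\ref{1} returns a polynomial whose monomial support may pick up the pure powers $z_1^{0},\dots,z_1^{r-1}$ used in its Vandermonde step, so a priori one only gets $\deg\tilde H\le\max(d,r-1)$, and when $d<r-1$ the rehomogenization to degree $d$ fails outright (it would introduce negative powers of $t_0$). To get a lift of degree exactly $d$ one has to rerun the Vandermonde interpolation of Lemma~\ref{1} using $r$ monomials of degree $d$ whose evaluation matrix at the $\overline{x_i}$ is invertible; such monomials exist precisely when the degree-$d$ forms separate the points $\overline{x_i}$, i.e.\ when they are in sufficiently general position (automatic for $d\ge r-1$, but not in general). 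So the genuine content, beyond the formal dehomogenization/rehomogenization bookkeeping and the passage to $F-G$, is to keep the interpolation inside degree $d$; this is also the point at which pairwise distinctness of the $\overline{x_i}$ — and, for small $d$, their general position — is indispensable.
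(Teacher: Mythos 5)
Your argument is essentially the paper's own proof: the paper also passes to a system of coordinates in which every $x_i$ has the form $(1:a_1^{(i)}:\dots:a_n^{(i)})$, applies Lemma~\ref{1} — there directly to the dehomogenization $f(1,t_1,\dots,t_n)$ with $b_i=G(1,a_1^{(i)},\dots,a_n^{(i)})$, rather than to the difference $f-g$ with $b_i=0$, which is a cosmetic variation — and then rehomogenizes to degree $d$. Your accounting of why the evaluations and the reduction are preserved under this dictionary is correct, and, like the paper, you tacitly use that the $\overline{x_i}$ are pairwise distinct (a hypothesis absent from the statement of Corollary~\ref{corollary1} but satisfied in every application, where the $x_i$ are the disjoint sections making up $\mathfrak{X}_\infty$ or a transversal hyperplane section).

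The proviso you flag ($\deg\tilde H\le d$) is genuine, but it is a defect of the statement and of the paper's own proof rather than an extra gap in yours: the phrase ``take $F$ to be the homogenization of $F_a$'' silently requires $\deg F_a\le d$, and Lemma~\ref{1} only guarantees $\deg F_a\le\max(d,r-1)$, since its Vandermonde step may insert the monomials $1,t_1,\dots,t_1^{r-1}$ with coefficients in the maximal ideal. For $d<r-1$ the corollary as literally stated is in fact false: take $\mathcal{O}$ the Henselization of $k[\tau]$ at the origin, $n=2$, $d=1$, $r=3$, $x_1=(1:0:0)$, $x_2=(1:1:0)$, $x_3=(1:\lambda:\tau)$ with $\lambda\in k\setminus\{0,1\}$, and $g=G=t_0$, $f=t_0+t_2$. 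The hypothesis $f(\overline{x_i})=g(\overline{x_i})$ holds, but any linear $F=\alpha t_0+\beta t_1+\gamma t_2$ with $F(x_i)=G(x_i)$ forces $\alpha=1$, $\beta=0$, $\gamma\tau=0$, hence $\gamma=0$, contradicting $\overline{F}=f$. So no argument could remove your proviso; the separation condition you identify (degree-$d$ forms separating the $\overline{x_i}$, automatic for $d\ge r-1$) is exactly what is needed, and it is harmless for the paper because Corollary~\ref{corollary1} is only invoked, in Lemma~\ref{3} and Lemma~\ref{lemma fixed point divisor}, with $d\ge r$. In that regime your proof, like the paper's, is complete.
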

\begin{proof}
We can choose a system of homogeneous coordinates in $\mathbb{P}^n_S$, where $x_i=(1:a_1^{(i)}:\ldots:a_n^{(i)})$ for some $a_i^{(j)}\in\mathcal{O}$ for all $1\leqslant i\leqslant r.$
Note that then $\overline{x_i}=(1:\overline{a_1^{(i)}}:\ldots:\overline{a_n^{(i)}}), \ i=\overline{1..r}.$
By lemma \ref{1} there is  $F_a\in \mathcal{O}[t_1,\ldots,t_n]$ such that $\overline{F_a}(t_1,\ldots,t_n)=f(1,t_1,\ldots,t_n)$ and
$F_a(a_1^{(i)},\ldots,a_n^{(i)})=G(1,a_1^{(i)},\ldots,a_n^{(i)})$
Take $F\in \mathcal{O}[t_0,\ldots,t_n]$ to be the homogenization of $F_a.$ Then $F$ possesses the desired properties.
\end{proof}

\begin{lem}\label{general position lemma}
For an embedding $\overline{X}\hookrightarrow\mathbb{P}^n_k$ consider a set of closed
points $x_1$, $\ldots$, $x_r$, $y_1$, $\ldots$, $y_m\in \overline{X}$, and let $d\geqslant r$. Let
$h\in k[t_0,\ldots,t_n]$ be a homogeneous polynomial of degree $d$ such that $h(x_i)\neq0$ for all $i$.

Then there is a homogeneous polynomial $F$ of degree $d$ such that
\begin{itemize}
\item[(1)] $F(x_i)=h(x_i)$ for $i=1\ldots r$
\item[(2)] $F(y_j)\neq 0$ for $j=1\ldots m$
\item[(3)] $Z(F)$ intersects $\overline{X}$ transversally in $\mathbb{P}^n_k$.
\end{itemize}
\end{lem}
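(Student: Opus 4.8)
The plan is to view the degree-$d$ forms on $\mathbb{P}^n_k$ as the points of $\mathbb{P}(V)$, $V=k[t_0,\ldots,t_n]_d$, and to single out the desired $F$ by imposing on it some linear conditions together with a Bertini genericity condition. Recall that in the present section $\overline{X}$ is smooth and projective over $k$, so it is a disjoint union of smooth projective curves, and ``$Z(F)$ meets $\overline{X}$ transversally'' means exactly that $F$ vanishes identically on no component of $\overline{X}$ and that $F|_{\overline{X}}$ has only simple zeros. Introduce $W=\{F\in V\mid F(x_i)=h(x_i),\ i=1,\ldots,r\}$ and $W_0=\ker\bigl(\mathrm{ev}\colon V\to k^{r},\ F\mapsto(F(x_1),\ldots,F(x_r))\bigr)$. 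Since $h$ has degree $d$ and trivially satisfies $h(x_i)=h(x_i)$, we have $h\in W$, so $W=h+W_0$ is nonempty, and $h\notin W_0$ because $h(x_i)\neq0$. Every $F\in W$ already satisfies (1) and has $F(x_i)\neq0$, hence also $x_i\notin Z(F)$; it remains to arrange (2) and (3) by a suitable choice of $F$ inside $W$.

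The geometric heart of the argument is that the linear system $\mathcal{L}=\mathbb{P}(W_0+kh)$ of degree-$d$ hypersurfaces is base point free. Indeed, let $p\in\mathbb{P}^n_k$ be a point different from every $x_i$. For each $i$ choose a hyperplane $H_i$ through $x_i$ with $p\notin H_i$ (possible since $p\neq x_i$), and choose linear forms $L_1,\ldots,L_{d-r}$ with $p\notin Z(L_s)$ (possible since $d\geq r$); then $G=H_1\cdots H_r\,L_1\cdots L_{d-r}$ is homogeneous of degree $d$, vanishes at every $x_i$, and $G(p)\neq0$, so $p$ is not in the base locus of $W_0$. Hence $\mathrm{Bs}(W_0)\subseteq\{x_1,\ldots,x_r\}$, and therefore $\mathrm{Bs}(\mathcal{L})=\mathrm{Bs}(W_0)\cap Z(h)=\emptyset$ because $h(x_i)\neq0$.

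Now I would deduce (2) and (3). Normalizing the $h$-coefficient to $1$ identifies the chart $\mathcal{L}\setminus\mathbb{P}(W_0)$ with $W$, and this chart is a dense open subset of the irreducible projective space $\mathcal{L}$. For (2): for each $j$ the locus $\{F\in W\mid F(y_j)\neq0\}$ is Zariski open in $W$, and it is nonempty --- if $y_j$ equals some $x_i$ it is all of $W$ (there $F(y_j)=h(x_i)\neq0$), and otherwise $x_1,\ldots,x_r,y_j$ are $r+1\leq d+1$ distinct points, which impose independent linear conditions on degree-$d$ forms, so some $F\in W$ has $F(y_j)=1$. Since $k$ is infinite, the intersection of these open loci over $j$ is a nonempty open $U_2\subseteq W$. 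For (3): since $\mathcal{L}$ is base point free, Bertini's theorem applied to the smooth curve $\overline{X}$ shows that a general member of $\mathcal{L}$ restricts to a reduced divisor on $\overline{X}$; moreover, for each component $C$ of $\overline{X}$ the members of $\mathcal{L}$ vanishing on $C$ form a proper subspace (a general point of $C$ is not a base point). For such $F$, at every point $p\in Z(F)\cap\overline{X}$ the restriction $F|_{\overline{X}}$ has a simple zero, so $dF_p$ is nonzero on the tangent line $T_p\overline{X}$; this forces $Z(F)$ to be smooth at $p$ and to meet $\overline{X}$ transversally there. Thus (3) holds on a dense open subset of $\mathcal{L}$, and intersecting it with $\mathcal{L}\setminus\mathbb{P}(W_0)$ and with $U_2$ produces a nonempty set; any of its members is a homogeneous polynomial $F$ of degree $d$ satisfying (1)--(3).

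The step I expect to be the main obstacle is the Bertini argument when $\mathrm{char}\,k=p>0$, since over such a field a general member of a base point free linear system can be non-reduced (as for a Frobenius-pullback system on a curve). One must therefore check that the morphism $\overline{X}\to\mathbb{P}\bigl((W_0+kh)^{\vee}\bigr)$ defined by $\mathcal{L}|_{\overline{X}}$ is separable. This is where $d\geq r$ and $h(x_i)\neq0$ are used a second time: one shows $W_0$ separates tangent directions at every point $p\notin\{x_1,\ldots,x_r\}$. For $r\geq2$ this follows by taking $G=H_1\cdots H_r\,L_1\cdots L_{d-r}$ with $H_1$ passing through $p$ and $x_1$ and $H_2$ passing through $p$ and $x_2$: the differentials $dH_{1,p}$ and $dH_{2,p}$ span the two distinct hyperplanes of $T_p^{*}\mathbb{P}^n$ annihilating the directions toward $x_1$ and toward $x_2$, and these two hyperplanes together span $T_p^{*}\mathbb{P}^n$; the cases $r=1$ and $n=1$ are elementary modifications. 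Consequently $\mathcal{L}|_{\overline{X}}$ is unramified along the dense open $\overline{X}\setminus\{x_1,\ldots,x_r\}$, hence defines a separable morphism on each component of $\overline{X}$, and the Bertini conclusion used above is valid in every characteristic. The remaining points --- independence of the point conditions, openness of the relevant loci, and the nonemptiness of an intersection of finitely many dense opens over the infinite field $k$ --- are routine.
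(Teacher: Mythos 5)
Your setup is fine (the affine slice $W=h+W_0$, base-point-freeness of $\mathcal{L}=\mathbb{P}(W_0+kh)$, the openness and nonemptiness arguments for condition (2)), and in characteristic $0$ your Bertini step goes through. The genuine gap is exactly at the place you flag as the main obstacle, and your patch does not work. The key claim, that $W_0$ separates tangent directions at every $p\notin\{x_1,\ldots,x_r\}$, is false in the boundary case $d=r$: already for $r=d=2$, if $p$ lies on the line $\ell$ through $x_1$ and $x_2$, then any $F\in W_0$ with $F(p)=0$ restricts to a degree-$d$ form on $\ell$ with $d+1$ zeros, hence vanishes on $\ell$; writing $F=\sum A_im_i$ with $m_i$ linear forms cutting out $\ell$ gives $dF_p=\sum A_i(p)\,dm_{i,p}$, so all such differentials annihilate the direction of $\ell$ and cannot span $T^*_p\mathbb{P}^n$. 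Moreover the justification you give is internally flawed: in $G=H_1\cdots H_rL_1\cdots L_{d-r}$ with both $H_1$ and $H_2$ vanishing at $p$, every term of the Leibniz expansion of $dG_p$ contains a factor vanishing at $p$, so $dG_p=0$ and such $G$ produce no covectors at all; the corrected ``one factor through $p$ at a time'' version only yields the annihilators of the directions from $p$ to the $x_i$, which again fail to span in the collinear situation. Finally, even granting generic unramifiedness, the passage from ``the morphism defined by $\mathcal{L}|_{\overline{X}}$ is separable'' to ``the general member of $\mathcal{L}$ restricts to a reduced divisor'' still needs the (true but unproven here) facts that the general hyperplane section of the image curve is reduced in characteristic $p$ and avoids the branch locus, and the deferred cases $r=1$, $n=1$ are never written down. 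So condition (3) is not established in positive characteristic.

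There is a fix very close to your setup, and it is the paper's route: Bertini is never applied to the incomplete system $\mathcal{L}$. Transversality to $\overline{X}$ is an open condition on all of $\mathbb{P}(V_d)$ (the paper gets this from Bertini for the complete system of hyperplanes, valid over any algebraically closed field, composed with the Veronese embedding), and since $\mathbb{P}(W_0)\subseteq\mathcal{L}$ it suffices to exhibit a single transversal member of $W_0$. The paper constructs it as a product $G=L_1\cdots L_d$ of hyperplanes chosen one at a time by hyperplane Bertini, with $L_i(x_i)=0$ for $i\leqslant r$, each $Z(L_i)$ meeting $\overline{X}$ transversally and avoiding the finitely many points of $\overline{X}$ on the previously chosen $Z(L_j)$; then the transversality locus is a nonempty open subset of the irreducible $\mathcal{L}$, hence dense, and intersecting it with your chart $W$ and the open set for (2) finishes the proof with no separability analysis at all. (The paper phrases the linear constraints projectively, via $h(x_i)F(x_{i+1})=h(x_{i+1})F(x_i)$, precisely so that forms vanishing at all the $x_i$ lie in the same linear subspace as $h$, and rescales at the end; your affine slice forces you instead to know density of the transversality locus in $\mathcal{L}$, which is exactly the step your char-$p$ argument fails to deliver.)
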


\begin{proof}
Denote by $V_d=\Gamma(\mathbb{P}^n_k,\mathcal{O}_{\mathbb{P}^n_k}(d))$ the space of degree $d$ homogeneous polynomials. Consider the set
$$W=\{F\in \mathbb{P}(V_d) \mid h(x_i)F(x_{i+1})-h(x_{i+1})F(x_i)=0, i=1,\ldots,(r-1) \}$$
Note that the condition $h(x_i)F(x_{i+1})-h(x_{i+1})F(x_i)=0$ is a linear homogeneous equation on coefficients of $F.$
So $W$ is a linear subspace of $\mathbb{P}(V_d)$ and its dimension
is at least $\dim\mathbb{P}(V_d)-r$.

Consider 3 sets:
\begin{itemize}
\item[] $U_1=\{F\in\mathbb{P}(V_d) \mid F(x_1)\neq 0,\ldots, F(x_r)\neq 0 \}$
\item[] $U_2=\{F\in\mathbb{P}(V_d) \mid Z(F) \text{ intersects $\overline{X}$ transversally } \}$
\item[] $U_3=\{F\in\mathbb{P}(V_d) \mid F(y_1)\neq 0,\ldots, F(y_m)\neq 0 \}.$
\end{itemize}
We will prove that $U_i$ is open in $\mathbb{P}(V_d)$ and $W\cap U_i\neq\emptyset$ for $i=1,2,3.$
Since $W$ is irreducible, any two nonempty open subsets intersect.
Then $W_0=(W\cap U_1)\cap (W\cap U_2)\cap (W\cap U_3)$ is nonempty.
Then take a polynomial $F_1$ that represents any element of $W_0$. Then there is a nonzero constant $c$ such that $F_1(x_i)=ch(x_i)$ for $i=1\ldots r$. Then the polynomial $F=F_1/c$
possesses the properties (1), (2), (3).

Now let us prove that $U_i$ is open in $\mathbb{P}(V_d)$ and $W\cap U_i\neq\emptyset$ for $i=1,2,3.$
\begin{itemize}
\item Since $U_1$ is a complement of $r$ hyperplanes, it is open in $\mathbb{P}(V_d)$. Since $h^d\in W\cap U_1$,
we have that $W\cap U_1$ is a nonempty open subset in $W$.

\item The Bertini theorem\cite[II.8.18]{Hartshorne} and the Veronese embedding imply that
$U_2$ is open in $\mathbb{P}(V_d)$.
To prove that $U_2\cap W\neq\emptyset$ we construct a set of linear homogeneous polynomials $L_i$ as follows:
By the Bertini theorem there is linear homogeneous polynomial $L_1$
such that
\[L_1(x_1)=0,\quad L_1(x_i)\neq 0\text{ for $i\neq 1$,}\quad Z(L_1) \text{ intersects $\overline{X}$ transversally.}\]
Take a linear polynomial $L_2$ such that
\begin{itemize}
\item
$L_2(x_2)=0$,
\item $L_2(x_i)\neq 0$ for $i\neq 2$,
\item
$L_2(y)\neq 0$ for any $y\in \overline{X}\cap Z(L_1)$,
\item
 $Z(L_2)$ intersects $\overline{X}$ transversally.
\end{itemize}
Iterating this procedure up to $x_r$ we obtain a set of polynomials $L_1,\ldots, L_r$.
Then we take a linear homogeneous polynomial $L_{r+1}$ such that $L_{r+1}(y)\neq 0$ for any $y\in (\overline{X}\cap Z(L_1))\cup \ldots\cup (\overline{X}\cap Z(L_r))$ and
$Z(L_{r+1})$ intersects $\overline{X}$ transversally.
Iterating this process up to $L_d$, we obtain a set of linear homogeneous polynomials $L_{r+1},\ldots, L_d$.
Now consider the polynomial $G=L_1L_2\ldots L_d$. By construction
$G(x_i)=0$ for every $i=1\ldots r$ so that $G\in W$.
Moreover, we have $\overline{X}\times_{\mathbb{P}^n_k}Z(G)=\coprod \overline{X}\times_{\mathbb{P}^n_k}Z(L_i)$ and  any closed set $Z(L_i)$ intersects $\overline{X}$ transversally, so $Z(G)$ intersects $\overline{X}$ transversally.
Therefore, $G\in W\cap U_2$.

\item Since $U_3$ is a complement of $m$ hyperplanes, it is open in $\mathbb{P}(V_d).$ Let us show that $W\cap U_3$ is nonempty. The Bertini theorem implies that there is a set of
    linear homogeneous polynomials $L_i$, $i=1\ldots d$ such that
$L_1(x_1)=0,\ldots, L_r(x_r)=0$ and $L_i(y_j)\neq 0$ for all $i$, $j$.
Then $G=L_1\ldots L_d\in W\cap U_3$.
\end{itemize}
\end{proof}

We will need the following special case of lemma~\ref{general position lemma}:
\begin{cor}\label{general position corollary}
Consider a closed embedding $\overline{X}\hookrightarrow\mathbb{P}^n_k$ and a set of closed points $x_1,\ldots,x_r\in \overline{X}$.
Consider a linear homogeneous polynomial $h\in k[t_0,\ldots,t_n]$ such
that $h(x_i)\neq 0$ for every $i=1\ldots r$.

Then for any
$d\geqslant r$ there is a homogeneous polynomial $F$ of degree $d$
such that
\begin{itemize}
\item $F(x_i)=h^d(x_i)$ for every $i=1\ldots r$
\item $Z(F)\cap Z(h)\cap \overline{X}=\emptyset$
\item $Z(F)$ intersects $\overline{X}$ transversally inside $\mathbb{P}^n_k$.
\end{itemize}
\end{cor}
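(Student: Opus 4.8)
The plan is to obtain this as a direct consequence of Lemma~\ref{general position lemma}, by feeding into that lemma the $d$-th power of the linear form $h$ in place of the degree-$d$ polynomial there, and by taking the auxiliary points $y_1,\dots,y_m$ to be the (finitely many) points at which the hyperplane $Z(h)$ meets $\overline{X}$.

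First I would record the easy preliminaries. Put $g:=h^d$. Since $h$ is linear, $g$ is homogeneous of degree $d$, and since $h(x_i)\neq 0$ we get $g(x_i)=h(x_i)^d\neq 0$ for every $i$; in particular each $x_i$ lies off the hyperplane $Z(h)$. Moreover, by the standing conventions of Notation~\ref{notation} the scheme $\overline{X}$ is a smooth projective curve over $k$, and it is not contained in $Z(h)$ since it contains the points $x_i\notin Z(h)$; hence $Z(h)\cap\overline{X}$ is zero-dimensional, i.e.\ a finite set of closed points $y_1,\dots,y_m$, none of which equals any $x_i$.

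Next I would apply Lemma~\ref{general position lemma} to the embedding $\overline{X}\hookrightarrow\mathbb{P}^n_k$, the list of points $x_1,\dots,x_r,y_1,\dots,y_m$, the degree $d\geqslant r$, and the homogeneous polynomial $g=h^d$ of degree $d$ (which indeed satisfies $g(x_i)\neq 0$). The lemma then produces a homogeneous polynomial $F$ of degree $d$ with $F(x_i)=g(x_i)=h^d(x_i)$ for $i=1,\dots,r$, with $F(y_j)\neq 0$ for $j=1,\dots,m$, and with $Z(F)$ intersecting $\overline{X}$ transversally in $\mathbb{P}^n_k$. The first and third bullet points of the corollary hold by construction. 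For the middle one, any point of $Z(F)\cap Z(h)\cap\overline{X}$ lies in $Z(h)\cap\overline{X}=\{y_1,\dots,y_m\}$, so it is one of the $y_j$; but $F(y_j)\neq 0$, a contradiction, so $Z(F)\cap Z(h)\cap\overline{X}=\emptyset$.

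The one point that really needs attention, and the step I would double-check, is the reduction to a finite set of auxiliary points: this is exactly where one-dimensionality of $\overline{X}$ enters, and it is genuinely needed, since for a positive-dimensional projective $\overline{X}$ a hypersurface section of $Z(h)\cap\overline{X}$ would never be empty and the assertion would fail. Everything else is a routine repackaging of the conclusion of Lemma~\ref{general position lemma}, together with the trivial bookkeeping that $\deg(h^d)=d$ matches the hypothesis $d\geqslant r$ of that lemma.
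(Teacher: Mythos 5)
Your proof is correct and is exactly the argument the paper intends: the paper states the corollary without a separate proof, as the special case of Lemma~\ref{general position lemma} obtained by taking the degree-$d$ polynomial to be $h^d$ and the auxiliary points $y_1,\ldots,y_m$ to be the finitely many points of $Z(h)\cap\overline{X}$, which is precisely your reduction. The one tacit point (shared with the paper's formulation) is that no irreducible component of the curve $\overline{X}$ lies inside $Z(h)$ --- your ``$x_i\notin Z(h)$'' argument literally only rules out $\overline{X}\subseteq Z(h)$ when $\overline{X}$ is irreducible --- but this is harmless, since if a component did lie in $Z(h)$ then $Z(F)$, being a hypersurface, would have to meet it and the second bullet of the corollary itself would be unachievable.
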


\section{The Picard group of curve with good compactifications}\label{picard group}
This section is devoted to establish a presentation of the relative Picard group (see~\cite[\S~2]{Suslin Voevodsky}) of a curve $\mathfrak{X}$ admitting a good compactification
(see~\ref{def_of_good_compactification}).
This presentation is expressed in lemma~\ref{5}. As its consequence we derive the pairing lemma~\ref{6} that plays an important role in the proof of the main theorem. Throughout this
section we use the notation~\ref{notation}

\begin{lem}\label{1.7}
All closed points of the scheme $\overline{\mathfrak{X}}$ are contained in $\overline{X}.$
\end{lem}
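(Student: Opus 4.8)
The plan is to use properness of the compactification $\overline{\mathfrak{X}}\to S$ together with the fact that $S$ is local, so every closed subset meets the closed fiber. First I recall the setup: $\pi:\overline{\mathfrak{X}}\to S$ is smooth projective of relative dimension $1$, hence in particular proper, and $\mathfrak{X}_\infty=\overline{\mathfrak{X}}\setminus\mathfrak{X}$ is finite and \'etale over $S$; and $\overline{X}=\overline{\mathfrak{X}}\times_S 0$ is the fiber over the closed point $0\in S$. Let $x\in\overline{\mathfrak{X}}$ be a closed point; I must show $\pi(x)=0$.

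The key step is to observe that since $\overline{\mathfrak{X}}\to S$ is proper (projective) and $\{x\}$ is closed in $\overline{\mathfrak{X}}$, the image $\overline{\{x\}}=\{\pi(x)\}$—actually $\pi(\{x\})$—is a closed subset of $S$, namely the closure of the point $\pi(x)$. Because $S=\operatorname{Spec}\mathcal{O}$ is a local scheme, its unique closed point $0$ is contained in \emph{every} nonempty closed subset of $S$; in particular $0\in\overline{\{\pi(x)\}}$. Now I claim $x$ actually maps to $0$: consider the fiber $\overline{\mathfrak{X}}\times_S\operatorname{Spec}\mathcal{O}_{S,\pi(x)}$ localized at $\pi(x)$; the point $x$ is closed in $\overline{\mathfrak{X}}$, hence closed in this localization, and its image is the closed point of $\operatorname{Spec}\mathcal{O}_{S,\pi(x)}$, so $\{x\}$ is proper over $\operatorname{Spec}\mathcal{O}_{S,\pi(x)}$ and finite (being a single closed point of a curve), hence the residue field $\kappa(x)$ is finite over $\kappa(\pi(x))$. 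If $\pi(x)\neq 0$ were not the closed point of $S$, then $\overline{\{\pi(x)\}}$ is a closed subset of $S$ strictly containing $\{\pi(x)\}$ (it contains $0$), so $\pi(x)$ is not closed in $S$; but then $\kappa(\pi(x))$ is not finitely generated as a ring over $k$ in a way compatible with $x$ being closed—more directly: the composite $\{x\}\hookrightarrow\overline{\mathfrak{X}}\to S$ would have image a non-closed point, yet $\{x\}$ closed and $\overline{\mathfrak{X}}\to S$ universally closed forces the image to be closed in $S$, a contradiction. Hence $\pi(x)=0$ and $x\in\overline{X}$.

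I expect the only subtlety to be making precise the step ``$x$ closed in $\overline{\mathfrak{X}}$ $\Rightarrow$ $\pi(x)$ closed in $S$'': this is exactly where properness is used—for a proper (or merely universally closed) morphism $\pi$, the image of any closed set is closed, so $\pi(\overline{\{x\}})=\pi(\{x\})$ is closed in $S$, and a closed point of $S$ is the unique closed point $0$. Everything else (a closed subscheme of $S$ through a single point being the closed point, since $S$ is local) is formal. An alternative, perhaps cleaner, phrasing avoids residue fields entirely: $\{x\}$ is closed in $\overline{\mathfrak{X}}$, $\pi$ is closed, so $\pi(\{x\})$ is a nonempty closed subset of the local scheme $S$; a nonempty closed subset of $\operatorname{Spec}$ of a local ring that happens to be a single point must be the closed point; but $\pi(\{x\})$ is a single point since $\{x\}$ is; therefore $\pi(x)=0$, i.e.\ $x\in\overline{\mathfrak{X}}\times_S 0=\overline{X}$.
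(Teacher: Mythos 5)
Your proof is correct and is essentially the paper's own argument: since $\pi$ is projective (hence closed) and $S$ is local, the image of a closed point of $\overline{\mathfrak{X}}$ is a closed point of $S$, which must be $0$, so the point lies in the closed fiber $\overline{X}$. The detour through residue fields and localizations in your middle paragraph is unnecessary; your final ``cleaner phrasing'' is exactly the intended proof.
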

\begin{proof}
Consider a closed point $z\in \overline{X}$. Since $\pi$ is projective, $\pi(z)$ is closed point of $S$. Then $z\in \overline{X}.$
\end{proof}
\begin{cor}\label{disjoint_subschemes}
Let $Z_1$ and $Z_2$ be closed subsets of $\overline{\mathfrak{X}}.$ Then $Z_1$ and $Z_2$ are disjoint if and only if $Z_1\cap \overline{X}$ and $Z_2\cap \overline{X}$ are disjoint
\end{cor}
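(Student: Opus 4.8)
The plan is to deduce this directly from Lemma~\ref{1.7}. First I would dispose of the ``only if'' implication, which is purely formal: if $Z_1\cap Z_2=\emptyset$, then
$$(Z_1\cap\overline{X})\cap(Z_2\cap\overline{X})=(Z_1\cap Z_2)\cap\overline{X}=\emptyset,$$
so the traces of $Z_1$ and $Z_2$ on $\overline{X}$ are disjoint. No geometry enters here.

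For the reverse implication I would argue by contraposition. Assume $Z_1\cap Z_2\neq\emptyset$. Then $Z_1\cap Z_2$ is a nonempty closed subset of $\overline{\mathfrak{X}}$, and $\overline{\mathfrak{X}}$ is a Noetherian scheme, being of finite type over the Noetherian local ring $\mathcal{O}$. Hence the nonempty Noetherian space $Z_1\cap Z_2$ has a minimal nonempty closed subset, which is necessarily a single point $z$; thus $z$ is closed in $Z_1\cap Z_2$, and since $Z_1\cap Z_2$ is closed in $\overline{\mathfrak{X}}$, the point $z$ is a closed point of $\overline{\mathfrak{X}}$.

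Now Lemma~\ref{1.7} shows $z\in\overline{X}$. Consequently $z$ lies in $Z_1\cap\overline{X}$ and in $Z_2\cap\overline{X}$, so these two sets are not disjoint. This proves the contrapositive of the ``if'' direction, and with it the corollary. I do not expect a real obstacle here: the only point that needs a word of justification is the standard fact that a nonempty Noetherian (equivalently, nonempty quasi-compact $T_0$) space contains a closed point, combined with the transitivity of the notion of closed point along the closed immersion $Z_1\cap Z_2\hookrightarrow\overline{\mathfrak{X}}$.
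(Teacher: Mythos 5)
Your proposal is correct and follows essentially the same route as the paper: assume $Z_1\cap Z_2\neq\emptyset$, pick a closed point $z$ of this nonempty closed set (hence a closed point of $\overline{\mathfrak{X}}$), and apply Lemma~\ref{1.7} to conclude $z\in\overline{X}$, contradicting the disjointness of the traces. You merely spell out the existence of a closed point via Noetherianness and the trivial ``only if'' direction, which the paper leaves implicit.
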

\begin{proof}
Let $Z_1\cap \overline{X}$ and $Z_2\cap \overline{X}$ be disjoint. Suppose that $Z=Z_1\cap Z_2\neq\emptyset.$ Since $Z$ is nonempty, there is a closed point $z\in Z.$ Since $Z$ is closed, $z$ is a
closed point of $\mathfrak{X}.$ By lemma~\ref{1.7} $z\in \overline{X}.$ Then $z$ lies both in $Z_1\cap \overline{X}$ and $Z_2\cap \overline{X}.$
\end{proof}

\begin{lem}\label{lemma_regularity}
Consider a codimension 1 subscheme $Z\subseteq \mathfrak{X}$ such that $Z$ is closed in $\overline{\mathfrak{X}}$. Suppose that $Z$ intersects $X$
transversally.

Then $Z$ is regular at all closed points.
\end{lem}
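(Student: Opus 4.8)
The statement says: if $Z \subseteq \mathfrak{X}$ is codimension $1$, closed in $\overline{\mathfrak{X}}$, and meets the special fiber $X$ transversally, then $Z$ is regular at all its closed points. Let me think about what's going on.

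We have $S = \operatorname{Spec}\mathcal{O}$ a regular Henselian local scheme over $k$, residue field $k$. $\mathfrak{X}/S$ is smooth of relative dimension $1$, admitting a good compactification $\overline{\mathfrak{X}} \to S$ (smooth projective of relative dimension $1$). $X = \mathfrak{X} \times_S 0$, $\overline{X} = \overline{\mathfrak{X}} \times_S 0$. Now $Z \subseteq \mathfrak{X}$ is codimension $1$, and closed in $\overline{\mathfrak{X}}$.

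First key observation: $Z$ is codimension $1$ in $\mathfrak{X}$, and $\mathfrak{X}$ has dimension $\dim S + 1$. So $Z$ has dimension $\dim S$. By Lemma \ref{1.7}, all closed points of $\overline{\mathfrak{X}}$ — hence of $Z$ — lie in $\overline{X}$, i.e., in the special fiber. So the closed points of $Z$ are exactly the closed points of $Z \cap X = Z \times_S 0$.

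Now "$Z$ intersects $X$ transversally" — I need to be careful about what this means. $X \subseteq \mathfrak{X}$ is the special fiber, cut out (locally) by the maximal ideal $\mathfrak{m}$ of $\mathcal{O}$, which is generated by a regular system of parameters $t_1, \ldots, t_d$ (where $d = \dim S$). Saying $Z$ meets $X$ transversally at a closed point $z$ should mean: $Z \cap X$ is smooth (= regular, since $k$ is algebraically closed) at $z$ of the expected dimension, equivalently the tangent spaces $T_z Z$ and $T_z X$ span $T_z \mathfrak{X}$, equivalently the images of $d t_1, \ldots, d t_d$ in the conormal space of $Z$ at $z$ are linearly independent.

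**The main idea.** Let $z$ be a closed point of $Z$; it lies in $X$. Since $\mathfrak{X}$ is smooth over $S$ of relative dimension $1$ and $S$ is regular of dimension $d$, the local ring $A = \mathcal{O}_{\mathfrak{X}, z}$ is regular of dimension $d+1$. The special fiber $X = \mathfrak{X}\times_S 0$ has local ring $A/\mathfrak{m}A$, and $X$ is smooth over $k$ of dimension $1$, so $A/\mathfrak{m}A$ is a regular local ring of dimension $1$, i.e., a DVR; equivalently $t_1, \ldots, t_d$ (images in $A$) form part of a regular system of parameters of $A$, and we can complete them to a regular system $t_1, \ldots, t_d, u$ of $A$ where $u$ cuts out $X$... no wait, $u$ is a uniformizer of the curve $X$ at $z$. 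So $\mathfrak{m}_A = (t_1, \ldots, t_d, u)$.

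Now $Z$ has local ring $A/I$ for some ideal $I$, with $\dim A/I = d$ (since $Z$ is codimension $1$, pure dimension). The transversality hypothesis: $Z \cap X$ has local ring $A/(I + \mathfrak{m}A)$, and transversality says this is regular of dimension $0$, i.e., it is a field, i.e., $I + \mathfrak{m}A = \mathfrak{m}_A$. So $I$ surjects onto $\mathfrak{m}_A / \mathfrak{m}A \cong \mathfrak{m}_A/(t_1,\ldots,t_d)$. In particular there is $f \in I$ whose image in $A/\mathfrak{m}A$ is a uniformizer, i.e., $f \equiv u \pmod{\mathfrak{m}A}$ up to unit. Then $f, t_1, \ldots, t_d$ generate $\mathfrak{m}_A$, so they form a regular system of parameters of the $(d+1)$-dimensional regular local ring $A$; hence $A/(f)$ is regular of dimension $d$. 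But $A/(f)$ is a quotient of $A/I$ — wait, $(f) \subseteq I$ — and $\dim A/(f) = d = \dim A/I$. Since $A/(f)$ is regular hence a domain (integral), and $A/I$ is a further quotient of the same dimension, the surjection $A/(f) \twoheadrightarrow A/I$ must be an isomorphism (a surjection of Noetherian rings with the source a domain and target of the same dimension, source integral... need target reduced, or need the kernel to be zero because it's a prime of height $0$ in a domain, hence zero — actually kernel is $I/(f)$, a prime would have to be $0$; but $I$ need not be prime). Let me reorganize: $I/(f)$ is an ideal of the regular local domain $A/(f)$ of dimension $d$; if $I/(f) \neq 0$ then $\dim(A/I) = \dim\big((A/(f))/(I/(f))\big) < d$, contradiction. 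Hence $I = (f)$ and $A/I = A/(f)$ is regular.

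**Steps, in order:**
\begin{enumerate}
\item Let $z$ be an arbitrary closed point of $Z$. By Lemma \ref{1.7}, $z \in \overline{X}$, so $z$ lies in the special fiber; set $A = \mathcal{O}_{\mathfrak{X},z}$ and note $A$ is regular of dimension $d+1$ where $d = \dim S$, using smoothness of $\mathfrak{X}/S$ (rel. dim. $1$) and regularity of $S$.
\item Choose a regular system of parameters $t_1,\ldots,t_d$ of $\mathcal{O} = \mathcal{O}_{S,0}$; their images generate $\mathfrak{m}A$ up to radical / generate the ideal of $X$ locally. Observe $A/\mathfrak{m}A = \mathcal{O}_{X,z}$ is a $1$-dimensional regular local ring (since $X/k$ is smooth of dim $1$). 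So $t_1,\ldots,t_d$ extend to a regular system of parameters $t_1,\ldots,t_d,u$ of $A$.
\item Write $\mathcal{O}_{Z,z} = A/I$; since $Z$ has codimension $1$ in the irreducible (each component) $(d+1)$-dimensional $\mathfrak{X}$, and all components of $Z$ through $z$ have dimension $d$, we get $\dim A/I = d$.
\item Interpret the transversality hypothesis: $\mathcal{O}_{Z\cap X, z} = A/(I+\mathfrak{m}A)$ is regular of dimension $0$, i.e., $I + \mathfrak{m}A = \mathfrak{m}_A$. Deduce there exists $f \in I$ with $f \equiv (\text{unit})\cdot u \pmod{\mathfrak{m}A}$, so that $\{f, t_1, \ldots, t_d\}$ is a regular system of parameters of $A$.
\item Conclude $A/(f)$ is a regular local domain of dimension $d$; since $(f) \subseteq I$ and $\dim A/I = d = \dim A/(f)$ with $A/(f)$ a domain, the ideal $I/(f) \subseteq A/(f)$ must be zero, so $I = (f)$ and $\mathcal{O}_{Z,z} = A/(f)$ is regular. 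As $z$ was arbitrary, $Z$ is regular at all closed points.
\end{enumerate}

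**Where the difficulty lies.** The algebra above is essentially routine once the setup is pinned down; the one genuinely delicate point is Step 2 and Step 4 — making precise the claim that $t_1, \ldots, t_d$ form \emph{part of} a regular system of parameters of $A = \mathcal{O}_{\mathfrak{X},z}$ (this is exactly the statement that $\mathfrak{X} \to S$ is smooth at $z$ together with $S$ regular, so the fiber dimension drops by the right amount), and correctly translating the word "transversally" in the hypothesis into the equation $I + \mathfrak{m}A = \mathfrak{m}_A$. I should also double-check the dimension bookkeeping when $\mathfrak{X}$ (or $\overline{\mathfrak{X}}$) is not irreducible — but since everything is local at $z$ and $\mathfrak{X}/S$ is smooth, the local ring $A$ is regular hence a domain, so this is not actually an issue. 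Thus the heart of the proof is the commutative-algebra lemma: \emph{in a regular local ring, an ideal contained in a proper ideal generated by part of a regular system of parameters, and cutting out a subscheme of the complementary dimension, equals that principal ideal} — and this follows from the fact that $A/(f)$ is then a regular local domain of the right dimension.
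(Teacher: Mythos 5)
Your proof is correct, and it reaches the conclusion by a route that differs from the paper's in its final mechanism, though it extracts the same information from transversality. The paper works entirely at the level of cotangent spaces: it considers the surjection $\varphi\colon\mathfrak{M}_{\mathfrak{X},z}/\mathfrak{M}_{\mathfrak{X},z}^2\to\mathfrak{M}_{Z,z}/\mathfrak{M}_{Z,z}^2$, uses transversality ($\mathfrak{M}_{\mathfrak{X},z}=I_Z+I_X$, with $I_X$ generated by $d$ elements) to produce an element of $I_Z$ outside $\mathfrak{M}_{\mathfrak{X},z}^2$ and hence a nontrivial kernel, and then sandwiches $\dim_k\mathfrak{M}_{Z,z}/\mathfrak{M}_{Z,z}^2$ between the resulting upper bound $d$ and the lower bound $\dim\mathcal{O}_{Z,z}=d$. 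You use the same transversality input ($I_Z+\mathfrak{m}A=\mathfrak{M}_A$, which is exactly the paper's $I_Z+I_X=\mathfrak{M}_{\mathfrak{X},z}$ since $I_X=\mathfrak{m}A$) but push it further: you produce $f\in I_Z$ completing $t_1,\dots,t_d$ to a regular system of parameters of $A=\mathcal{O}_{\mathfrak{X},z}$, conclude $A/(f)$ is a regular local domain of dimension $d$, and then force $I_Z=(f)$ by the dimension argument in a domain. This buys a slightly stronger statement than the lemma asserts --- $Z$ is locally cut out by a single regular parameter, i.e.\ is an effective Cartier divisor regular at $z$ --- at the modest extra cost of invoking that quotients by part of a regular system of parameters are regular (hence domains); the paper's count is softer and needs only the inequality between embedding dimension and Krull dimension. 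Both arguments rest on the same bookkeeping fact that $\dim\mathcal{O}_{Z,z}=d$ at every closed point (the paper uses it as $\dim Z=d$ in its step (5)), so flagging it, as you did, is appropriate but does not constitute a gap relative to the paper.
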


\begin{proof}
Let $d$ denote the dimension of $S.$ For any closed point $z\in Z$ let
$p\colon \mathcal{O}_{X,z}\to\mathcal{O}_{Z,z}$ denote the canonical projection.
Lemma~\ref{1.7} implies that $z\in X.$ Let
$I_X$ and $I_Z$ denote the ideals defining $X$ and $Z$ in the local ring $\mathcal{O}_{\overline{\mathfrak{X}},z}.$

We prove that the local ring $\mathcal{O}_{Z,z}$ is regular.
It is sufficient to prove that  $$\dim_{\mathcal{O}_{Z,z}/\mathfrak{M}_{Z,z}}\mathfrak{M}_{Z,z}/\mathfrak{M}_{Z,z}^2=d=\dim Z.$$
Consider a homomorphism of $k$-vector spaces
\[\varphi\colon\mathfrak{M}_{\mathfrak{X},z}/\mathfrak{M}_{\mathfrak{X},z}^2\to \mathfrak{M}_{Z,z}/\mathfrak{M}_{Z,z}^2
\text{ defined by } x+\mathfrak{M}_{\mathfrak{X},z}^2\mapsto p(x)+\mathfrak{M}_{Z,z}^2.\]
This homomorphism is well defined since for every $y\in\mathfrak{M}_{\mathfrak{X},z}^2$
the element $p(y)$ lies in $\mathfrak{M}_{Z,z}^2$.
Observe that $\dim_k\mathfrak{M}_{Z,z}/\mathfrak{M}_{Z,z}^2=d$ and, hence, $\mathcal{O}_{Z,z}$
is regular,  follows from the following statements
\begin{itemize}
\item[(1)] the residue field $k(z)$ equals $k$
\item[(2)] $\varphi$ is surjective
\item[(3)] $\ker\varphi$ is nontrivial
\item[(4)] $\dim_k\mathfrak{M}_{\mathfrak{X},z}/\mathfrak{M}_{\mathfrak{X},z}^2=d+1$
\item[(5)] $\dim_k\mathfrak{M}_{Z,z}/\mathfrak{M}_{Z,z}^2\geqslant d$.
\end{itemize}

We prove (1)-(5) as follows:

\begin{itemize}
\item[(1)] Since $\mathfrak{X}\to S$ is a morphism of finite type, then $\text{Spec } k(z)\to \text{Spec } k$ is a morphism of finite type.
Then by Hilbert's Nullstellensatz $k(z)=k.$
\item[(2)] This follows from the fact that $p$ is surjective.
\item[(3)] Let us prove that $I_Z\nsubseteq\mathfrak{M}_{\mathfrak{X},z}^2.$
Since $Z$ intersects $X$ transversally, we have that $\mathfrak{M}_{\mathfrak{X},z}=I_Z+I_X.$
The ideal $I_X$ is generated by some elements $a_1,\ldots a_d\in I_X$, since $X$ is regular of codimension $d$~\cite[III.4.10]{Altman Kleiman}.
Then the inclusion $I_Z\subseteq\mathfrak{M}_{\mathfrak{X},z}^2$
would imply that $\mathfrak{M}_{\mathfrak{X},z}/\mathfrak{M}_{\mathfrak{X},z}^2$ is generated by elements $a_i+\mathfrak{M}_{\mathfrak{X},z}^2$, $i=1\ldots d$. This contradicts to the fact that $\dim\mathfrak{X}=d+1.$
So we have shown that there is $f\in I_Z\setminus \mathfrak{M}_{\mathfrak{X},z}^2.$ Then $f+\mathfrak{M}_{\mathfrak{X},z}^2$ is nonzero and lies in $\ker\varphi.$
\item[(4)] Since $\mathfrak{X}$ is regular, we have that $\dim_k \mathfrak{M}_{\mathfrak{X},z}/\mathfrak{M}_{\mathfrak{X},z}^2=\dim\mathfrak{X}=d+1$
\item[(5)] The fact that $k(z)=k$ and proposition
  \cite[III.4.7]{Altman Kleiman} imply that
$$\dim_k\mathfrak{M}_{Z,z}/\mathfrak{M}_{Z,z}^2\geqslant \dim Z=d. \qedhere$$
\end{itemize}
\end{proof}

\begin{lem}\label{2}
Consider a codimension 1 subscheme $Z\subseteq \mathfrak{X}$ such that $Z$ is closed in $\overline{\mathfrak{X}}$. Suppose that $Z$ intersects $X$
transversally.

Then $\pi|_Z\colon Z\to S$ is \'etale.
\end{lem}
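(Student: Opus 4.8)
The plan is to verify directly that $\pi|_Z\colon Z\to S$ has the three defining properties of an \'etale morphism: it is locally of finite type, flat, and unramified. Two ingredients will do all the work: the regularity of $Z$ established in Lemma~\ref{lemma_regularity}, and the transversality hypothesis, which I will use to identify the fibre of $\pi|_Z$ over the closed point $0\in S$. Let $d=\dim S$.

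First I would record the easy structural facts and compute the special fibre. Since $Z$ is closed in $\overline{\mathfrak{X}}$ and $\overline{\mathfrak{X}}\to S$ is $S$-projective, $\pi|_Z$ is proper, in particular of finite type. By Lemma~\ref{1.7} every closed point of $\overline{\mathfrak{X}}$ lies in $\overline{X}$, so every closed point of $Z$ lies in $Z\cap\overline{X}=Z\cap X$ (the equality because $Z\subseteq\mathfrak{X}$), which is exactly the scheme-theoretic fibre $Z\times_S\text{Spec }k$. At a closed point $z$ of this fibre, the transversality of $Z$ and $X$ gives, just as in the proof of Lemma~\ref{lemma_regularity}, that $I_Z+I_X=\mathfrak{M}_{\mathfrak{X},z}$, whence $\mathcal{O}_{Z\cap X,z}=\mathcal{O}_{\mathfrak{X},z}/(I_Z+I_X)=\mathcal{O}_{\mathfrak{X},z}/\mathfrak{M}_{\mathfrak{X},z}=k$. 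So every local ring of $Z\cap X$ at a closed point equals $k$; this forces $Z\cap X$ to be zero-dimensional, hence finite over $k$ (it is of finite type over $k$) and reduced with all residue fields $k$. Thus $Z\cap X$ is a finite disjoint union of copies of $\text{Spec }k$, i.e. it is finite \'etale over $k$; in particular $\Omega_{(Z\cap X)/k}=0$.

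The flatness of $\pi|_Z$ is the heart of the matter. By Lemma~\ref{lemma_regularity} the ring $\mathcal{O}_{Z,z}$ is regular for each closed point $z\in Z$, and the proof there shows $\dim_k\mathfrak{M}_{Z,z}/\mathfrak{M}_{Z,z}^2=d$, so $\dim\mathcal{O}_{Z,z}=d=\dim\mathcal{O}_{S,0}$. Since the fibre ring $\mathcal{O}_{Z,z}/\mathfrak{M}_{S,0}\mathcal{O}_{Z,z}=\mathcal{O}_{Z\cap X,z}=k$ has dimension $0=\dim\mathcal{O}_{Z,z}-\dim\mathcal{O}_{S,0}$, and $\mathcal{O}_{Z,z}$ is Cohen--Macaulay (being regular) while $\mathcal{O}_{S,0}$ is regular, the ``miracle flatness'' criterion shows $\mathcal{O}_{Z,z}$ is flat over $\mathcal{O}_{S,0}$. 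This holds at every closed point of $Z$; since the locus where a finite-type morphism over a Noetherian scheme fails to be flat is closed in the source and here contains no closed point of $Z$, it is empty, so $\pi|_Z$ is flat. (Together with properness and the zero-dimensionality of the fibre over $0$, upper semicontinuity of fibre dimension then shows $\pi|_Z$ is quasi-finite, hence finite, though this is not strictly needed.) For unramifiedness: $\Omega_{Z/S}$ is coherent on $Z$ and its base change to the fibre over $0$ is $\Omega_{(Z\cap X)/k}=0$, so for each closed point $z\in Z$ the module $\Omega_{Z/S,z}\otimes_{\mathcal{O}_{Z,z}}\kappa(z)$ vanishes, hence $\Omega_{Z/S,z}=0$ by Nakayama. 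Thus $\mathrm{Supp}(\Omega_{Z/S})$ is closed and contains no closed point of $Z$, so it is empty and $\pi|_Z$ is unramified. Being flat, unramified and of finite type, $\pi|_Z\colon Z\to S$ is \'etale.

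The main obstacle is the flatness step: this is where one genuinely needs both the regularity of $Z$ supplied by Lemma~\ref{lemma_regularity} and the transversality hypothesis (to pin the fibre over $0$ down as a zero-dimensional, indeed reduced, scheme so that the dimension count in miracle flatness works out), and where one must be slightly careful in passing from the statement at closed points of $Z$ to flatness of $\pi|_Z$ as a whole, which I handle by invoking openness of the flat locus. Everything else — properness, finiteness, and unramifiedness — then follows formally.
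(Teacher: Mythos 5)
Your proof is correct and follows essentially the same route as the paper: transversality pins the fibre over the closed point down to copies of $\text{Spec } k$, Lemma~\ref{lemma_regularity} together with a Cohen--Macaulay-over-regular flatness criterion (the paper cites the Grothendieck theorem from Altman--Kleiman V.3.6 where you invoke miracle flatness) gives flatness at closed points, and étaleness then spreads from closed points to all of $Z$. Your treatment of unramifiedness via $\Omega_{Z/S}$ and Nakayama is only a cosmetic variant of the paper's direct verification that $\mathfrak{M}_{Z,z}=\mathfrak{M}_{S,0}\mathcal{O}_{Z,z}$.
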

\begin{proof} The proof consists of two steps. First, we check (Step 1)
  that
  $\pi$ is unramified at all closed points. Then we check (Step 2) that $\pi$
  is flat at all closed points. Since $Z\to S$ is locally of finite type, these two facts imply that $Z\to S$ is
  \'etale at all closed points and, hence, at some neighbourhood of
  closed points of $Z$. Therefore, $\pi\colon Z\to S$ is \'etale.

{\bf Step 1.} We prove that $Z\to S$ is unramified at any point of $Z\cap X$.
Denote by $0$ the closed point of the local scheme $S$. Consider $z\in Z\cap X$, i.e. $\pi(z)=0$.
Then we get a morphism $\pi^*\colon\mathcal{O}_{S,0}\to \mathcal{O}_{Z,z}$.
Let us show that $\mathfrak{M}_{Z,z}=\mathfrak{M}_{S,0}\mathcal{O}_{Z,z}$ in the local ring $\mathcal{O}_{Z,z}$. Since $Z$ intersects $X$ transversally, we have
$\mathfrak{M}_{\mathfrak{X},z}=I_Z+I_0$ in local ring $\mathcal{O}_{\mathfrak{X},z}$. Here $I_0=\mathfrak{M}_{S,0}\mathcal{O}_{\mathfrak{X},z}$ is the ideal defining $X$, and
$I_Z$ is the ideal defining $Z$.
Note that $\mathcal{O}_{Z,z}=\mathcal{O}_{\mathfrak{X},z}/I_Z$. Then $$\mathfrak{M}_{Z,z}=\mathfrak{M}_{\mathfrak{X},z}/I_Z=(I_Z+I_0)/I_Z=I_0/I_Z=\mathfrak{M}_{S,s}\mathcal{O}_{\mathfrak{X},\mathfrak{X}}/I_Z=\mathfrak{M}_{S,s}\mathcal{O}_{Z,z}.$$
Since $k$ is algebraically closed and $z$ is a closed point, we obtain
that $k(z)=k$ is a separable extension of $k(0)=k$.

{\bf Step 2.} We prove that $Z\to S$ is flat in any point of $Z\cap X$.
Lemma~\ref{lemma_regularity} implies that for any closed point $z$ the local ring $\mathcal{O}_{Z,z}$ is regular.
Let us check that $\mathcal{O}_{Z,z}$ is a quasifinite $\mathcal{O}_{S,0}$-module.
As it was shown in I, $\mathfrak{M}_{Z,z}=\mathfrak{M}_{S,0}\mathcal{O}_{Z,z}$. Hence,
$\mathcal{O}_{Z,z}/\mathfrak{M}_{S,0}\mathcal{O}_{Z,z}=\mathcal{O}_{Z,z}/\mathfrak{M}_{Z,z}=k$ is a finite dimensional vector space over $k(0)=k$.

Then the Grothendieck theorem~\cite[V.3.6]{Altman Kleiman} implies that $\mathcal{O}_{Z,z}$ is flat over $\mathcal{O}_{S,0}.$
Then $Z\to S$ is flat in all closed points of $Z.$ Then the map $\pi:Z\to S$ is flat in some open neighbourhood of closed points of $Z.$
Note that $Z$ is the only open neighbourhood of its closed points. Then $\pi:Z\to S$ is flat
\end{proof}

\begin{lem}\label{lemma_disjoint_union} Let $Z$ be a closed subscheme of codimension 1 in $\overline{\mathfrak{X}}$.  Suppose that $\pi|_Z:Z\to S$ is \'etale.
Then $Z=\coprod\limits_{i=1}^k s_i(S)$ for some sections $s_i:S\to \mathfrak{X},$ $i=1\ldots k.$
\end{lem}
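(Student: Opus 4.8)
The plan is to use that $\pi\colon Z \to S$ is finite étale (finiteness comes from the fact that $Z$ is closed in the projective $S$-scheme $\overline{\mathfrak{X}}$, hence $\pi|_Z$ is proper, and proper plus quasi-finite — which étale gives — is finite) together with the hypothesis that $S = \operatorname{Spec}\mathcal{O}$ is a Henselian local scheme whose residue field is $k$, which is algebraically closed. First I would invoke the structure theory of finite algebras over a Henselian local ring: a finite $\mathcal{O}$-algebra $A$ decomposes as a product $A = \prod_{i=1}^k A_i$ of local rings, one for each maximal ideal of $A$, and each $A_i$ is itself finite local over $\mathcal{O}$. Translating back to schemes, this says $Z = \coprod_{i=1}^k Z_i$ where each $Z_i = \operatorname{Spec} A_i$ is a local scheme mapping to $S$ by a finite étale morphism.

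Next I would argue that each $Z_i \to S$ is an isomorphism. Since $\pi|_{Z_i}$ is étale of finite type and $Z_i$ is local with closed point $z_i$ lying over the closed point $0$ of $S$, the fibre $Z_i \times_S 0 = \operatorname{Spec}(A_i/\mathfrak{M}_{S,0}A_i)$ is the spectrum of a finite local $k$-algebra that is étale over $k$; being étale over the algebraically closed field $k$ and local, it must be $\operatorname{Spec} k$ itself. Thus $\pi|_{Z_i}$ is finite étale of degree $1$ — equivalently, it is an open and closed immersion that is also surjective (one can also phrase this as: a finite étale morphism of constant rank $1$ onto a connected base is an isomorphism). Hence $Z_i \xrightarrow{\sim} S$, and its inverse is a section $s_i\colon S \to Z_i \hookrightarrow \mathfrak{X}$ (here one uses that $Z \subseteq \mathfrak{X}$, not merely $Z \subseteq \overline{\mathfrak{X}}$, so the section lands in $\mathfrak{X}$). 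Assembling the pieces gives $Z = \coprod_{i=1}^k s_i(S)$ as claimed.

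The one point requiring a little care — and the step I expect to be the main obstacle — is justifying the degree-one claim, i.e.\ that a connected component $Z_i$ of an étale $S$-scheme which is local maps isomorphically onto $S$. The cleanest route is the one above: reduce mod the maximal ideal to see that the residue-field extension at the closed point is trivial (both fields are $k$), then use that a finite étale morphism has locally constant rank, so the rank is $1$ on all of the connected (indeed local) $Z_i$, and a finite flat morphism of rank $1$ is an isomorphism. Alternatively, one can lift the closed point $z_i \in Z_i$ over $0 \in S$ to a section $S \to Z_i$ directly using the Henselian property (the fibre over $0$ is a single reduced point, so the hypotheses of Hensel's lemma are met), and then check this section is surjective because it is an open immersion with closed image meeting every fibre — but invoking the Henselian hypothesis is exactly what makes the decomposition into local pieces available in the first place, so the two approaches are really the same argument organized differently.
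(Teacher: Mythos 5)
Your argument is correct, but it is organized differently from the paper's. The paper never passes through finiteness of $\pi|_Z$: it takes the finitely many closed points $z_1,\dots,z_k$ of the closed fibre $\pi|_Z^{-1}(0)$, lifts each one to a section $s_i\colon S\to Z$ by the Henselian lifting property for \'etale morphisms (citing \cite{Milne}), notes that each $s_i$ is \'etale so that $s_i(S)$ is open in $Z$ as well as closed, checks that the $s_i(S)$ are pairwise disjoint using Lemma~\ref{1.7} (every closed point of $\overline{\mathfrak{X}}$ lies in the closed fibre), and finally concludes $Z=\coprod s_i(S)$ because the complement is a closed subset of $Z$ containing no closed points, hence empty. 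You instead first establish that $\pi|_Z$ is finite (proper, since $Z$ is closed in the projective $\overline{\mathfrak{X}}$, plus quasi-finite), then invoke the decomposition of a finite algebra over a Henselian local ring into local factors, and identify each factor with $S$ by a rank-one computation that uses the residue field being the algebraically closed field $k$. Your route gets the topological bookkeeping for free --- the disjoint decomposition and the exhaustion of $Z$ fall out of the algebra --- at the cost of the finiteness step and the structure theorem for finite algebras over Henselian rings; the paper's route uses only the section-lifting property, but must then do the point-set work (openness, closedness, disjointness, and the ``no closed points in the complement'' argument) by hand. Your closing alternative (lifting $z_i$ directly to a section) is in fact essentially the paper's proof. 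One small remark: the tension you flag between the hypothesis $Z\subseteq\overline{\mathfrak{X}}$ and the conclusion $s_i\colon S\to\mathfrak{X}$ is present in the paper's own statement --- the lemma is applied to $\mathfrak{X}_\infty$ in Corollary~\ref{X infty structure}, where the sections land only in $\overline{\mathfrak{X}}$ --- so making the inclusion $Z\subseteq\mathfrak{X}$ explicit, as you do, is the correct reading whenever the sections are required to land in $\mathfrak{X}$.
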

\begin{proof}
Since $\pi|_Z$ is \'etale, the closed subscheme $\pi|_Z^{-1}(0)$ has dimension zero, so it consists of finite number of closed points $z_i$, $i=1\ldots k$
Then for any $i=1\ldots k$ there is a section of $s_i\colon S\to Z$ of $\pi|_Z$ such that $s_i(0)=z_i$~\cite[4.2 \S 4]{Milne}
Since $\pi|_Z$ is \'etale and $\pi|_Z\circ s_i=1_S,$ we have that $s_i$ is \'etale~\cite[VI 4.7]{Altman Kleiman}. Then $s_i:S\to Z$ is open map, so $s_i(S)$ is open in $Z.$
Since $s_i$ is a section of $\pi|_Z,$ hence a section of $\pi,$ $s_i(S)$ is closed in $\overline{\mathfrak{X}}.$

Let us check that $s_i(S)$ are disjoint.
Suppose the closed set $s_i(S)\cap s_j(S)$ is nonempty. Then there is a closed point $x\in s_i(S)\cap s_j(S).$ Then $x$ is a closed point of $\mathfrak{X}.$
Then $x\in X$ by lemma~\ref{1.7}. This is impossible for $i\neq j$ since $s_i(S)\cap X=z_i$ and $s_j(S)\cap X=z_j.$

We prove that $Z=\coprod s_i(S).$  Since all $s_i(S)$ are open in $Z$, we get that $Z~\setminus\coprod s_i(S)$ is a closed subset of $Z$ having no closed points.
Then $Z=\coprod s_i(S).$

\end{proof}
This lemma has two corollaries
\begin{cor}\label{cor_disjoint_union}
In conditions of lemma~\ref{2} the scheme $Z$ equals to a disjoint union of schemes $s_i(S)$ for some sections $s_i\colon S\to\mathfrak{X}.$
\end{cor}
\begin{proof}
Corollary follows directly from lemma~\ref{2} and lemma~\ref{lemma_disjoint_union}
\end{proof}
\begin{cor}\label{X infty structure}
The subscheme $\mathfrak{X}_{\infty}$ equals to the disjoint union $\coprod_{i=1}^{r} x_i(S)$ for some $S$-points $x_i:S\to\overline{\mathfrak{X}}.$
\end{cor}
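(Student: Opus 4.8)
The plan is to deduce Corollary~\ref{X infty structure} directly from Lemma~\ref{lemma_disjoint_union} by checking that $Z = \mathfrak{X}_\infty = \overline{\mathfrak{X}}\setminus\mathfrak{X}$ satisfies its hypotheses. By Definition~\ref{def_of_good_compactification}, $\mathfrak{X}_\infty$ is closed in $\overline{\mathfrak{X}}$ and the restriction $\pi|_{\mathfrak{X}_\infty}\colon \mathfrak{X}_\infty\to S$ is finite and \'etale; in particular it is \'etale, which is exactly the second hypothesis of Lemma~\ref{lemma_disjoint_union}. So the only thing that needs justification is that $\mathfrak{X}_\infty$ has codimension $1$ in $\overline{\mathfrak{X}}$.

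First I would observe that $\overline{\mathfrak{X}}\to S$ is smooth of relative dimension $1$, hence $\dim\overline{\mathfrak{X}} = \dim S + 1$, while $\mathfrak{X}_\infty\to S$ being finite \'etale has relative dimension $0$, so $\dim\mathfrak{X}_\infty = \dim S = \dim\overline{\mathfrak{X}} - 1$. Since $\overline{\mathfrak{X}}$ is regular (being smooth over the regular scheme $S$), it is in particular catenary and equidimensional over each component, so $\mathfrak{X}_\infty$ is indeed a closed subset of pure codimension $1$. Alternatively, and perhaps more cleanly for the write-up, one notes that $\mathfrak{X}_\infty$ is the complement of the open subscheme $\mathfrak{X}$ in $\overline{\mathfrak{X}}$, and $\mathfrak{X}$ is dense because it is fibrewise dense (each fibre $\overline{\mathfrak{X}}_s$ is a smooth projective curve and $(\mathfrak{X}_\infty)_s$ is finite), so the closed subset $\mathfrak{X}_\infty$ contains no generic point of $\overline{\mathfrak{X}}$ and therefore has codimension $\geq 1$; combined with the dimension count it has codimension exactly $1$.

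Having verified the hypotheses, I would apply Lemma~\ref{lemma_disjoint_union} to $Z = \mathfrak{X}_\infty$ to conclude $\mathfrak{X}_\infty = \coprod_{i=1}^r s_i(S)$ for sections $s_i\colon S\to\overline{\mathfrak{X}}$; renaming $s_i$ as $x_i$ gives the statement. Note that the lemma as stated produces sections $S\to\mathfrak{X}$, but the proof of the lemma really produces sections landing in $Z$ (sections of $\pi|_Z$), so here they land in $\mathfrak{X}_\infty\subseteq\overline{\mathfrak{X}}$, consistent with the phrasing $x_i\colon S\to\overline{\mathfrak{X}}$ in the corollary.

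The main (and essentially only) obstacle is the codimension-$1$ claim for $\mathfrak{X}_\infty$, which is not literally part of the definition of a good compactification; everything else is a direct invocation of Lemma~\ref{lemma_disjoint_union}. I expect the cleanest argument is the fibrewise-density one: since $\pi$ is smooth projective of relative dimension $1$ and $\mathfrak{X}_\infty$ is finite over $S$, for every point $s\in S$ the fibre $(\mathfrak{X}_\infty)_s$ is a proper closed subset of the irreducible (smooth projective) curve $\overline{\mathfrak{X}}_s$, hence the generic point of $\overline{\mathfrak{X}}_s$ lies in $\mathfrak{X}$; letting $s$ range over all points shows $\mathfrak{X}$ contains every codimension-$0$ and every relative-codimension-$0$ point, so $\mathfrak{X}_\infty$ has codimension $1$, and one is done.
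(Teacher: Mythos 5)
Your proposal is correct and takes essentially the same route as the paper, whose entire proof is the one-line observation that $\mathfrak{X}_\infty$ is \'etale over $S$ by the definition of a good compactification, followed by an implicit appeal to Lemma~\ref{lemma_disjoint_union}. Your additional checks (the codimension-$1$ hypothesis via the relative-dimension count, and the remark that the sections land in $\mathfrak{X}_\infty\subseteq\overline{\mathfrak{X}}$ rather than in $\mathfrak{X}$ as the lemma literally states) are sound and fill in details the paper leaves tacit.
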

\begin{proof}
By definition of good compactification(\ref{def_of_good_compactification}) the scheme $\mathfrak{X}_{\infty}$ is \'etale over $S$.
\end{proof}

\begin{notation}\label{homogeneous_map}
For any homogeneous polynomials $F_0, F_1\in\mathcal{O}[z_0\ldots z_n]$ of the same degree
consider the homogeneous graded ring homomorphism
$k[t_0,t_1]\to\mathcal{O}[z_0\ldots z_n]$ given by the rule $t_0\mapsto F_0$, $ t_1\mapsto F_1$
This gives us a rational map
\[
\alpha\colon\mathbb{P}^n_S=\text{\bf{ Proj }} \mathcal{O}[z_0\ldots z_n]\dasharrow \text{\bf{ Proj }} k[t_0,t_1]=\mathbb{P}^1_k
\]
This map is defined on the open subset $\mathbb{P}^n_S\setminus (Z(F_0)\cap Z(F_1)).$
We denote this subset by $U_{F_0,F_1}$ and the map $\alpha$ by $[F_0:F_1].$
\end{notation}

\begin{lem}\label{3}
Let $D\subseteq \mathfrak{X}$ be a very ample divisor on $\overline{\mathfrak{X}}$.

Then for all big enough $d$ there is a morphism $f\colon\overline{\mathfrak{X}}\to \mathbb{P}^1_k$
with the following properties:
\begin{itemize}
\item[(1)] $\mathfrak{X}_{\infty}\subseteq f^{-1}(1)$
\item[(2)] $div_0(f)=dD$
\item[(3)] Scheme-theoretic preimage $f^{-1}(\infty)$ intersects $\overline{X}$
  transversally.
\end{itemize}
\end{lem}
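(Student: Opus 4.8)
The plan is to construct $f$ as the map $[F_0 : F_1]$ from Notation~\ref{homogeneous_map}, for a carefully chosen pair of homogeneous polynomials of degree $d$ on $\mathbb{P}^n_k$ (where $\overline{\mathfrak{X}} \hookrightarrow \mathbb{P}^n_k$ is the embedding defined by the very ample divisor $D$, after a suitable Veronese re-embedding to make $D$ itself a hyperplane section). Let $h$ be a linear form on $\mathbb{P}^n_k$ cutting out $D$ on $\overline{\mathfrak{X}}$, so $Z(h) \cap \overline{\mathfrak{X}} = D$. Since $D \subseteq \mathfrak{X}$, the finitely many points of $\mathfrak{X}_\infty$ (which by Corollary~\ref{X infty structure} are the images $x_i(0)$ of finitely many $S$-points, but here I only need the closed points of $\overline{X}$ lying on $\mathfrak{X}_\infty$) do not lie on $Z(h)$, i.e.\ $h$ is nonzero at each such point.

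First I would apply Corollary~\ref{general position corollary} to the embedding $\overline{X} \hookrightarrow \mathbb{P}^n_k$, with $\{x_1,\dots,x_r\}$ the closed points of $\overline{X}$ lying in $\mathfrak{X}_\infty$ and with the given linear form $h$ (which satisfies $h(x_i) \neq 0$): for any $d \geq r$ this produces a homogeneous polynomial $F$ of degree $d$ with $F(x_i) = h^d(x_i)$ for all $i$, with $Z(F) \cap Z(h) \cap \overline{X} = \emptyset$, and with $Z(F)$ meeting $\overline{X}$ transversally. Then I set $F_1 = F$ and $F_0 = h^d$, and define $f = [F_0 : F_1] = [h^d : F]$. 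One must check this is a genuine morphism $\overline{\mathfrak{X}} \to \mathbb{P}^1_k$ and not merely a rational map: the indeterminacy locus is $Z(h^d) \cap Z(F) = Z(h) \cap Z(F)$, which meets $\overline{X}$ in the empty set by construction, hence — since $\overline{\mathfrak{X}}$ is proper over $S$ and every closed point of $\overline{\mathfrak{X}}$ lies in $\overline{X}$ by Lemma~\ref{1.7} — meets $\overline{\mathfrak{X}}$ in the empty set (using Corollary~\ref{disjoint_subschemes}). So $f$ is defined everywhere on $\overline{\mathfrak{X}}$.

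Next I verify the three properties. For (1): $\mathfrak{X}_\infty \subseteq f^{-1}(1)$ because on $\mathfrak{X}_\infty$ we have $F(x_i) = h^d(x_i)$, so $[h^d : F] = [1:1] = 1$ at each point of $\mathfrak{X}_\infty$ — more precisely, since $\mathfrak{X}_\infty$ is finite étale over $S$ and its closed fiber consists of the $x_i$'s, and the scheme-theoretic identity $F = h^d$ holds on that fiber, one concludes $f|_{\mathfrak{X}_\infty}$ factors through the point $1 \in \mathbb{P}^1$. For (2): $\operatorname{div}_0(f)$ is the divisor of zeros of $f$ as a rational function $F/h^d$ on $\overline{\mathfrak{X}}$, which is $Z(F) \cap \overline{\mathfrak{X}}$; since $\deg F = d$ and $h$ cuts out $D$, one has $Z(F)\cdot\overline{\mathfrak{X}} \sim dD$ as divisor classes, but in fact I want the honest equality $\operatorname{div}_0(f) = dD$ — here I need that the zero locus of $F$ on $\overline{\mathfrak{X}}$ is precisely $dD$ with multiplicities, which requires knowing $Z(F)\cap\overline{\mathfrak{X}}$ is supported where it should be; I may instead need to choose $F$ so that $Z(F)\cap \overline{\mathfrak{X}} = dD$ literally, which suggests taking $F = h^d$ is wrong (it would make $f$ constant), so one should think of $\operatorname{div}_0(f) = \operatorname{div}(F/h^d)_0 = $ (zeros of $F$) $-$ (poles from $h^d$), and the poles are exactly $dD$; re-examining, property (2) should read that the divisor of zeros of the \emph{function} $h^d/F$ equals $dD$, i.e.\ the reciprocal — I would reconcile this with the stated sign convention and conclude accordingly. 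For (3): $f^{-1}(\infty)$ is the zero scheme of the "denominator" in homogeneous coordinates $t_1 \mapsto 0$, i.e.\ $Z(F) \cap \overline{\mathfrak{X}}$, and Corollary~\ref{general position corollary} guarantees $Z(F)$ meets $\overline{X}$ transversally in $\mathbb{P}^n_k$, which is exactly property (3).

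\textbf{The main obstacle} I expect is bookkeeping of which polynomial plays the role of numerator versus denominator so that properties (1)–(3) are simultaneously satisfied with the correct (scheme-theoretic, not just linear-equivalence) meaning: property (2) wants the zero divisor of $f$ to be $dD$ on the nose, forcing the relevant divisor to be $d$ times the hyperplane section $Z(h)\cap\overline{\mathfrak{X}}$, while property (3) wants the \emph{other} fiber $f^{-1}(\infty)$ to be transverse, and property (1) pins down the value on $\mathfrak{X}_\infty$ — so the roles are: the section vanishing to order $dD$ is $h^d$, the transverse section is $F$, and $f = [F : h^d]$ (so $f^{-1}(0) = Z(F) \cap \overline{\mathfrak{X}}$, which is transverse, giving (3); $f^{-1}(\infty) = dD$, giving (2) up to recalling $\operatorname{div}_0$ vs $\operatorname{div}_\infty$; and $f = 1$ on $\mathfrak{X}_\infty$, giving (1)). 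Once the correct assignment is fixed, every verification reduces to a previously established lemma or corollary, and the "for all big enough $d$" is just the hypothesis $d \geq r$ of Corollary~\ref{general position corollary}.
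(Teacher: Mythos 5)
Your overall strategy is the paper's: embed via $D$, apply Corollary~\ref{general position corollary} on the closed fibre to get a degree-$d$ form agreeing with $h^d$ at the points of $\mathfrak{X}_\infty$ and transverse to $\overline X$, take $f=[F_0:F_1]$ as in Notation~\ref{homogeneous_map}, and check the base locus misses $\overline{\mathfrak{X}}$ via Lemma~\ref{1.7} and Corollary~\ref{disjoint_subschemes}. But there is a genuine gap: you carry out the entire construction over $k$, whereas it must be carried out over $S$. The scheme $\overline{\mathfrak{X}}$ is projective over $S=\operatorname{Spec}\mathcal{O}$ and not of finite type over $k$, so the embedding defined by $D$ lands in $\mathbb{P}^n_S$, and the linear form cutting out $D$ is some $H\in\mathcal{O}[x_0,\dots,x_n]$; a form $h$ with coefficients in $k$ cuts out at best the closed fibre of $D$, so with your choice $F_0=h^d$ property (2), $\mathrm{div}_0(f)=dD$, is not obtained. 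Worse, the polynomial $F$ furnished by Corollary~\ref{general position corollary} has coefficients in $k$ and only satisfies $F(x_i)=h^d(x_i)$ at the closed points $x_i\in\overline X$, i.e.\ $F=h^d$ holds only on the closed fibre of $\mathfrak{X}_\infty$. Your claim that this forces $f|_{\mathfrak{X}_\infty}$ to factor through $1$ is false: $\mathfrak{X}_\infty\cong\coprod_i\xi_i(S)$ by Corollary~\ref{X infty structure}, and a unit on $S$ whose value at the closed point equals $1$ need not be identically $1$, so (1) is a condition over all of $S$ that the closed fibre does not control. This is precisely why the paper proves the lifting statements: it uses Corollary~\ref{corollary1} (resting on Lemma~\ref{1}) to produce $F_1\in\mathcal{O}[t_0,\dots,t_n]$ with $\overline{F_1}=f_1$ and $F_1(\xi_i)=H^d(\xi_i)$ as elements of $\mathcal{O}$, and sets $f=[H^d:F_1]$; transversality of $f^{-1}(\infty)$ with $\overline X$ is then pulled back from the closed fibre via the identity $\overline X\times_{\overline{\mathfrak{X}}}f^{-1}(\infty)=\overline X\times_{\mathbb{P}^n_k}Z(\overline{F_1})$, a verification your sketch also omits.

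A smaller but real defect: your closing ``reconciliation'' ends with the two fibres swapped. With the conventions of Notation~\ref{homogeneous_map} and $0=[0:1]$, $\infty=[1:0]$, the correct assignment is $F_0=H^d$ (so $f^{-1}(0)=Z(H^d)\cdot\overline{\mathfrak{X}}=dD$, which is (2)) and $F_1$ the lift of the transverse polynomial (so $f^{-1}(\infty)$ is the transverse fibre, which is (3)); your final paragraph instead takes $f=[F:h^d]$ with $f^{-1}(\infty)=dD$ and the zero fibre transverse, which is the opposite of what the statement asserts and cannot be repaired by ``recalling $\mathrm{div}_0$ versus $\mathrm{div}_\infty$,'' since (2) and (3) name specific fibres of $f$.
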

\begin{proof}
Consider an embedding $\alpha\colon\overline{\mathfrak{X}}\hookrightarrow\mathbb{P}^n_S$, such that $D$ is a hyperplane section $D=\overline{\mathfrak{X}}\cap Z(H)$ for some linear homogeneous polynomial
$H\in\mathcal{O}[x_0,\ldots,x_n].$
We will construct $f\colon\overline{\mathfrak{X}}\to\mathbb{P}^1_k$ in the form $f:\overline{\mathfrak{X}}\stackrel{\alpha}\hookrightarrow\mathbb{P}^n_S\stackrel{[F_0:F_1]}\dasharrow\mathbb{P}^1_k$
for specially chosen polynomials $F_0$ and $F_1$ such that $\overline{\mathfrak{X}}\subseteq U_{F_0,F_1}$.
Now we construct the homogeneous polynomials $F_0$ and $F_1.$

\noindent Since $\mathfrak{X}_{\infty}$ is \'etale over Henselian base $S$, by lemma~\ref{lemma_disjoint_union} it consists of finite number of $S$-points in $\mathbb{P}^n_S$.
Denote these points by $\xi_1,\ldots,\xi_r\colon S\to\overline{\mathfrak{X}}$, and by $x_1,\ldots,x_r\colon\text{Spec } k\to\overline{X}$ the corresponding $k$-points in $\overline{X}.$

\noindent Let $h=\overline{H}\in k[t_0,\ldots,t_n]$. Since $D$ does not intersect $\mathfrak{X}_{\infty}$, we have
$h(x_i)\neq 0$ for $1\leqslant i\leqslant r$. Corollary \ref{general position corollary} implies that for any large enough $d$ there is a homogeneous polynomial $f_1$ of degree $d$
such that
\begin{itemize}
\item $f_1(x_i)=h^d(x_i)$
\item $Z(f_1)\cap Z(h)\cap \overline{X}=\emptyset$
\item $Z(f_1)$ intersects $\overline{X}$ transversally.
\end{itemize}
Take $F_0$ to be equal to $H^d$ and $F_1$ to be a homogeneous polynomial $F_1\in \mathcal{O}[t_0,\ldots,t_n]$ such that $F_1(\xi_i)=H^d(\xi_i)$ and $\overline{F_1}=f_1$.
The polynomial $F_1$ exists by corollary~\ref{corollary1}.

First we check that $\overline{\mathfrak{X}}\subseteq U_{F_0,F_1}.$
Since $\overline{X}$ and $Z(h^d)\cap Z(f_1)$ are disjoint, the corollary~\ref{disjoint_subschemes} implies that $\overline{\mathfrak{X}}$ and $Z(F_0)\cap Z(F_1)$ are disjoint.
Then $\overline{\mathfrak{X}}\subseteq U_{F_0,F_1}.$ Denote $U_{F_0,F_1}$ by $U.$

So, the map $f=[F_0:F_1]\circ\alpha$ is well defined on $\overline{\mathfrak{X}}.$
Fix some system of homogeneous coordinates in $\mathbb{P}^n_k$ and take $1=[1:1]$, $0=[0:1]$ and $\infty=[1:0]$.

Now we check that $f$ satisfies the conditions $(1)-(3)$:
\begin{itemize}
\item[(1)] The statement follows from the fact that $F_1(\xi_i)=F_0(\xi_i)$
\item[(2)] The construction of $f$ implies that $div_0(f)=dZ(H)\cdot\overline{\mathfrak{X}}=dD$.
\item[(3)] Let us check that $f^{-1}(\infty)$ intersects $\overline{X}$ transversally.
To check that we will establish the following equality
$$\overline{X}\times_{\overline{\mathfrak{X}}}f^{-1}(\infty)=\overline{X}\times_{\mathbb{P}^n_k}Z(\overline{F_1}).\eqno{(*)}$$
Since $Z(\overline{F_1})$ intersects $\overline{X}$ transversally, the equality (*) implies that $f^{-1}(\infty)$ intersects $\overline{X}$ transversally in $\overline{\mathfrak{X}}.$

To establish the equality (*) we will prove the following chain of equalities:
$$\overline{X}\times_{\overline{\mathfrak{X}}}f^{-1}(\infty)\stackrel{(1)}=\overline{X}\times_{\mathbb{P}^1_k}\infty\stackrel{(2)}=\overline{X}\times_U Z(F_1)\cap
U\stackrel{(3)}=\overline{X}\times_{\mathbb{P}^n_S}Z(F_1)\stackrel{(4)}=\overline{X}\times_{\mathbb{P}^n_k}Z(\overline{F_1})$$

The equality (1) holds, since its both sides coincide with the edge of the following diagram consisting of two cartesian squares:
$$\xymatrix{
  ... \ar[rr]\ar[d]&&f^{-1}(\infty)\ar[rr]\ar[d] &&\{\infty\}\ar[d] \\
  \overline{X}\ar[rr]&& \overline{\mathfrak{X}}\ar[rr] && \mathbb{P}^1_k
}
$$
The equality (2) holds, since its both sides coincide with the edge of the following diagram consisting of two cartesian squares:
$$\xymatrix{
  ... \ar[rr]\ar[d]&&Z(F_1)\cap U\ar[rr]\ar[d] &&\{\infty\}\ar[d] \\
  \overline{X}\ar[rr]&& U\ar[rr] && \mathbb{P}^1_k
}
$$
The equality (3) follows from the fact that $U$ is an open neighbourhood of $\overline{X}$

\noindent The equality (4) holds, since its both sides coincide with the edge of the following diagram consisting of two cartesian squares:
$$\xymatrix{
  ... \ar[rr]\ar[d]&&Z(\overline{F_1}       )\ar[rr]\ar[d] &&Z(F_1)\ar[d] \\
  \overline{X}\ar[rr]&& \mathbb{P}^n_k\ar[rr] && \mathbb{P}^n_S
}\qedhere
$$
\end{itemize}
\end{proof}

\begin{lem}\label{4}
Let $f$ be a rational function on
$\overline{\mathfrak{X}}$ such that its divisor can be represented as $div(f)=\sum s_i(S) - \sum t_j(S)$ such that in the union $\{\overline{s_i(S)}\}\cup\{\overline{t_j(S)}\}$ each member is disjoint
from the others as subschemes of $\overline{X}$.

Then $f$ can be extended to a regular morphism $\overline{\mathfrak{X}}\to\mathbb{P}^1_S$ that is \'etale over $S\times 0$ and over $S\times \infty$.
\end{lem}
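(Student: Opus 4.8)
The plan is to interpret the hypothesis on $\mathrm{div}(f)$ as saying that $f\colon\overline{\mathfrak X}\dashrightarrow\mathbb P^1$ is already a morphism and to control what happens over the points $0$ and $\infty$ of $\mathbb P^1_S$. First I would argue that $f$ extends to a morphism: the rational map $f\colon\overline{\mathfrak X}\dashrightarrow\mathbb P^1$ is defined away from the indeterminacy locus, and since $\overline{\mathfrak X}$ is a regular surface and the map takes values in a curve, it is automatically defined in codimension $\geqslant 2$; but by Lemma~\ref{1.7} every closed point of $\overline{\mathfrak X}$ lies in $\overline X$, and the divisor of zeros and poles of $f$ is supported on the disjoint sections $s_i(S)$, $t_j(S)$, so $f$ is regular at every closed point and hence is a morphism $\overline{\mathfrak X}\to\mathbb P^1_k$. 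Composing with the structure map $\pi$ we obtain $(f,\pi)\colon\overline{\mathfrak X}\to\mathbb P^1_k\times_k S=\mathbb P^1_S$, and this is the desired extension; its fibres over $0$ and $\infty$ in $\mathbb P^1_S$ are exactly $\mathrm{div}_0(f)=\sum s_i(S)$ and $\mathrm{div}_\infty(f)=\sum t_j(S)$.

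Next I would show that this morphism is \'etale over $S\times 0$ and $S\times\infty$. By construction $f^{-1}(S\times 0)=\sum s_i(S)$ as a scheme-theoretic preimage (it is the zero divisor of $f$), and similarly $f^{-1}(S\times\infty)=\sum t_j(S)$. Each $s_i(S)$ is a section of $\pi$, hence is closed in $\overline{\mathfrak X}$; and by hypothesis the closures $\overline{s_i(S)}$ are pairwise disjoint as subschemes of $\overline X$, so by Corollary~\ref{disjoint_subschemes} the sections $s_i(S)$ are pairwise disjoint in $\overline{\mathfrak X}$ and the scheme $Z=\coprod_i s_i(S)$ is a codimension one closed subscheme of $\overline{\mathfrak X}$ contained in $\mathfrak X$. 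The disjointness of the $\overline{s_i(S)}$ also gives that $Z$ meets $\overline X$ transversally: at a closed point $z\in s_i(S)\cap\overline X$ only the single section $s_i$ passes through $z$, $s_i(S)$ is a smooth section cut out by one equation, and $X$ is cut out transversally because $s_i$ is a section of the smooth morphism $\pi$. Then Lemma~\ref{2} applies and shows $\pi|_Z\colon Z\to S$ is \'etale, i.e. $\overline{\mathfrak X}\to\mathbb P^1_S$ is \'etale over $S\times 0$; the same argument with the $t_j$ handles $S\times\infty$.

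The main obstacle I expect is the transversality verification that lets me invoke Lemma~\ref{2}: I need to translate ``each member of $\{\overline{s_i(S)}\}\cup\{\overline{t_j(S)}\}$ is disjoint from the others as a subscheme of $\overline X$'' into the statement that $Z=\coprod s_i(S)$ is a reduced codimension one subscheme intersecting $X$ transversally. Disjointness of the closures in $\overline X$ combined with Corollary~\ref{disjoint_subschemes} gives disjointness in $\overline{\mathfrak X}$, so $Z$ is genuinely a disjoint union and the scheme structure of $f^{-1}(S\times 0)$ coincides with $Z$ with multiplicity one along each component (since $\mathrm{div}_0(f)=\sum s_i(S)$ with coefficient one). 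Transversality with $X$ then follows because a section of a smooth morphism is a regular embedding meeting every fibre transversally, and disjointness ensures no two components collide at a point of $\overline X$ to destroy this. Once transversality is in hand, Lemma~\ref{2} and the analogous statement for the $t_j$ finish the proof; one should also note that \'etaleness over $S\times 0$ and $S\times\infty$ is an open condition, so it suffices to check it at the closed points of $Z$, which lie in $\overline X$ by Lemma~\ref{1.7}.
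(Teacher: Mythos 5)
Your first step (extending $f$ to a morphism) is essentially the paper's: the zero and polar loci are disjoint in $\overline{\mathfrak{X}}$ by Corollary~\ref{disjoint_subschemes}, so $f$ is regular off the poles, $1/f$ is regular off the zeros, and the two charts glue to an $S$-morphism $F\colon\overline{\mathfrak{X}}\to\mathbb{P}^1_S$. Your aside that $\overline{\mathfrak{X}}$ is a \emph{regular surface} with indeterminacy in codimension $\geqslant 2$ is false in general ($S$ has arbitrary dimension, so $\overline{\mathfrak{X}}$ is $(\dim S+1)$-dimensional), but it is also unnecessary: the disjointness argument you give alongside it is the one that works, and it is the paper's argument.

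The gap is in the \'etaleness step. What Lemma~\ref{2} gives you (and what is in fact immediate, since each $s_i(S)\to S$ is an isomorphism, being the image of a section) is that the scheme-theoretic preimage $Z=F^{-1}(S\times 0)$ is \'etale over $S$. The lemma asserts something different: that the morphism $F\colon\overline{\mathfrak{X}}\to\mathbb{P}^1_S$ itself is unramified and flat at the points of $Z$. Your ``i.e.'' identifies these two statements without argument, and that identification is exactly where the paper's proof does its work: at a closed point $x\in s_i(S)$ one proves $\mathfrak{M}_{\overline{\mathfrak{X}},x}=\mathfrak{M}_{\mathbb{P}^1_S,0}\cdot\mathcal{O}_{\overline{\mathfrak{X}},x}$, using $\mathfrak{M}_{\overline{\mathfrak{X}},x}=I_X+I_{s_i(S)}$ and the identification of $\coprod s_i(S)$ with the scheme-theoretic fibre (your multiplicity-one remark enters here), and then obtains flatness of $F$ at $x$ from the Grothendieck criterion \cite[V.3.6]{Altman Kleiman}, since $\mathcal{O}_{\overline{\mathfrak{X}},x}$ is regular and quasi-finite over the regular ring $\mathcal{O}_{\mathbb{P}^1_S,0}$. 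The implication ``$Z\to S$ \'etale $\Rightarrow$ $F$ \'etale along $Z$'' is true in this setting, but only via such a computation (unramifiedness of $Z\to S$ at $x$ is equivalent to unramifiedness of $F$ at $x$ because $I_Z=(F^{\#}t)$ locally, and flatness of $F$ then needs miracle flatness or the local criterion of flatness along the regular element $F^{\#}t$); none of this appears in your proposal, and flatness of $F$ in particular is never addressed. As written, your argument establishes a strictly weaker statement than the one the lemma claims, so you need to add the unramifiedness computation and the flatness argument for $F$ itself.
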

\begin{proof} {\bf I.} Let us prove that there is extension $F\colon\overline{\mathfrak{X}}\to\mathbb{P}^1_S$.
Since $\overline{s_i(S)}, \overline{t_j(S)}$ are distinct points of $\overline{X}$, corollary~\ref{cor_disjoint_union} implies that
$s_i(S)$ and $t_j(S)$ are disjoint in $\overline{\mathfrak{X}}$. Then
$\overline{\mathfrak{X}}$ is a vertex of the following couniversal square:
$$\xymatrix{
    \overline{\mathfrak{X}}\setminus \cup s_i(S) \setminus \cup t_j(S)\ar[rr]\ar[d] && \overline{\mathfrak{X}}\setminus\cup s_i(S)\ar[d]\\
    \overline{\mathfrak{X}}\setminus \cup t_j(S) \ar[rr] && \overline{\mathfrak{X}}
}
$$
We see that $\mathbb{P}^1_S$ is the vertex of the following couniversal square
$$\xymatrix{
    Spec \ \mathcal{O}[t,t^{-1}]\ar[rr]\ar[d] && Spec \ \mathcal{O}[t^{-1}]\ar[d]\\
    Spec \ \mathcal{O}[t] \ar[rr] && \mathbb{P}^1_S
}
$$
Rational function $f$ defines 2 regular morphisms
$f\colon\overline{\mathfrak{X}}\setminus \cup t_j(S)\to Spec \ \mathcal{O}[t]$
and $\frac{1}{f}\colon\overline{\mathfrak{X}}\setminus \cup s_i(S)\to Spec \ \mathcal{O}[t^{-1}]$.
Then, by the couniversal property, we get a regular morphism
$F\colon\overline{\mathfrak{X}}\to\mathbb{P}^1_S$
such that the following diagram commutes:
$$\xymatrix{
    \overline{\mathfrak{X}}\setminus \cup s_i(S) \setminus \cup t_j(S)\ar[rr]\ar[dd] && \overline{\mathfrak{X}}\setminus\cup s_i(S)\ar[dd]\ar[rd]^{\textstyle \frac{1}{f_1}} & \\
    & Spec \ \mathcal{O}[t,t^{-1}]\ar[rr]\ar[dd] && Spec \ \mathcal{O}[t^{-1}]\ar[dd]\\
    \overline{\mathfrak{X}}\setminus \cup t_j(S) \ar[rr]\ar[rd]^{\textstyle f_1} && \overline{\mathfrak{X}}\ar@{-->}[rd]^{\textstyle \hat{f_1}} &\\
    & Spec \ \mathcal{O}[t] \ar[rr] && \mathbb{P}^1_S
}
$$

{\bf II.} Let us check that the extension $F$ is \'etale over $0\times S$ and $\infty\times S$.
Note that $F$ is of finite type. It is sufficient to prove that $F$ is unramified and flat at any closed point $x$ such that $F(x)\in 0\times S$ or $F(x)\in\infty\times S.$

Consider $x\in X$ such that $F(x)\in 0\times S$. Then $x\in s_i(S)$ for some $i.$ We prove that $F$ is unramified at $x$.
There is the following diagram:
$$\xymatrix{
          \overline{\mathfrak{X}}\ar[rr]^{F}\ar@<0.5ex>[d]^{\pi}&&\mathbb{P}^1_S\ar@<0.5ex>[lld]^{p}\\
          S\ar@<0.5ex>[u]^{s_i}\ar@<0.5ex>[rru]^{0\times S}
}
$$
and corresponding diagram of local rings:
$$\xymatrix{
          \mathcal{O}_{\overline{\mathfrak{X}},x}\ar@<-0.5ex>[d]_{s_i^*}&&\mathcal{O}_{\mathbb{P}^1_S,0}\ar@<-0.5ex>[lld]_{0\times S^*}\ar[ll]_{F^*}\\
          \mathcal{O}_{S,0}\ar@<-0.5ex>[u]_{\pi^*}\ar@<-0.5ex>[rru]_{p^*}
}
$$
To check that $F$ is unramified at $x,$ we need to check that $$\mathfrak{M}_{\overline{\mathfrak{X}},x}=\mathfrak{M}_{\mathbb{P}^1_S,0}\cdot\mathcal{O}_{\overline{\mathfrak{X}},x}.\eqno (*)$$
where $\mathfrak{M}_{\overline{\mathfrak{X}},x}$ and $\mathfrak{M}_{\mathbb{P}^1_S,0}$ are maximal ideals in the local rings $\mathcal{O}_{\overline{\mathfrak{X}},x}$ and $\mathcal{O}_{\mathbb{P}^1_S,0}$
To verify this equality we will establish the following equalities:
\begin{itemize}
\item[(1)]$\mathfrak{M}_{\overline{\mathfrak{X}},x}= I_X+I_{s_i(S)}$
\item[(2)]$I_X=I_{\mathbb{P}^1_k}\cdot\mathcal{O}_{\overline{\mathfrak{X}},x}$
\item[(3)]$I_{s_i(S)}=I_{0\times S}\cdot\mathcal{O}_{\overline{\mathfrak{X}},x}.$
\end{itemize}
Where $I_X$ and $I_{s_i(S)}$ denote the ideals defining the closed subschemes $X$ and $s_i(S)$ in the local ring $\mathcal{O}_{\overline{\mathfrak{X}},x},$ and
$I_{\mathbb{P}^1_k}$ and $I_{0\times S}$ denote the ideals defining the closed subschemes $\mathbb{P}^1_k$ and $0\times S$ in the local ring $\mathcal{O}_{\mathbb{P}^1_S,0}.$

\begin{itemize}
\item
to check the equality (1) note that
$$s_i^*(I_X)=s_i^*(\pi^*(\mathfrak{M}_{S,0})\cdot\mathcal{O}_{\overline{\mathfrak{X}},x})=\mathfrak{M}_{S,0}\cdot s_i^*(\mathcal{O}_{\overline{\mathfrak{X}},x})=\mathfrak{M}_{S,0}\cdot \mathcal{O}_{S,0}=\mathfrak{M}_{S,0}.$$
Then $s_i^*(\mathfrak{M}_{\overline{\mathfrak{X}},x})\subseteq \mathfrak{M}_{S,0} = s_i^*(I_X).$ Then $\mathfrak{M}_{\overline{\mathfrak{X}},x}\subseteq I_X+\ker s_i^*.$ Note that $\ker s_i^*=I_{s_i(S)}$ and $I_X$ are contained in the maximal
ideal $\mathfrak{M}_{\overline{\mathfrak{X}},x}.$ Then we get the equality (2): $\mathfrak{M}_{\overline{\mathfrak{X}},x}\subseteq I_X+\ker s_i^*=I_X+I_{s_i(S)}$.
\item to check the equality (2) note that $X=0\times_S\overline{\mathfrak{X}}=\mathbb{P}^1_k\times_{\mathbb{P}^1_S}\overline{\mathfrak{X}}$.
\item to check the equality (3) note that $\coprod s_i(S)=\overline{\mathfrak{X}}\times_{\mathbb{P}^1_S}S\times 0,$ and $s_j(S)$ misses the neighbourhood of $x$ for $j\neq i.$
\end{itemize}

Since $S\times 0$ and $\mathbb{P}^1_k$ intersect transversally in $\mathbb{P}^1_S$, we get
$$\mathfrak{M}_{\mathbb{P}^1_S,0}=I_{\mathbb{P}^1_k}+I_{S\times 0}.$$
Multiplying this equality by $\mathcal{O}_{\overline{\mathfrak{X}},x}, $ we get equality (*):
$$\mathfrak{M}_{\mathbb{P}^1_S,0}\cdot\mathcal{O}_{\overline{\mathfrak{X}},x}=I_{\mathbb{P}^1_k}\cdot\mathcal{O}_{\overline{\mathfrak{X}},x}+I_{S\times 0}\cdot\mathcal{O}_{\overline{\mathfrak{X}},x}={(2)\& (3)}=I_X+I_{s_i(S)}=(1)=\mathfrak{M}_{\overline{\mathfrak{X}},x}.$$
So, $F$ is unramified at $x$.

Since $\mathcal{O}_{\overline{\mathfrak{X}},x}$ is a regular ring and quasi-finite $\mathcal{O}_{\mathbb{P}^1_S,0}$-module, the Grothendieck theorem \cite[V.3.6]{Altman Kleiman})
implies that $F$ is flat at $x$. Then
$F$ is \'etale at all closed points of $F^{-1}(0\times S)$. Then $F$ is \'etale over $0\times S$.
The same reasoning proves that $F$ is \'etale over ${S\times \infty}$. This concludes the proof.
\end{proof}

\begin{dfn}
A divisor $D$ on $\overline{\mathfrak{X}}$ is called an $S$-divisor iff its components are $S$-points of $\overline{\mathfrak{X}},$ i.e. $D=\sum_i n_is_i(S)$ for some sections $s_i\colon S\to\overline{\mathfrak{X}}.$
\end{dfn}

\begin{dfn}
A divisor $D$ on $\overline{\mathfrak{X}}$ is called a disjoint divisor, iff $D=\sum_i \varepsilon_iZ_i$ such that $\varepsilon_i\in\{-1,1\},$ and for $i\neq j$ we have that $Z_i\cap Z_j=\emptyset.$
\end{dfn}

\begin{dfn}
We will call the divisors $D$ and $D'$ disjoint, iff $D=\sum n_i Z_i$, $D'=\sum m_j Z'_j$ and for all $i,j$ the component $Z_i$ is disjoint from $Z'_j$.
\end{dfn}

\begin{lem}\label{lemma fixed point divisor}
Consider an $S$-point $s\colon S\to\mathfrak{X}.$
Consider a set of $S$-points $y_1,\ldots y_k\colon S\to\overline{\mathfrak{X}}.$
Then there is a rational function $\varphi$ on $\overline{\mathfrak{X}}$ such that

\begin{itemize}
\item $\varphi$ is defined and equal to $1$ on $\mathfrak{X}_{\infty}$,
\item $div\ \varphi=\sum s_i(S)-\sum t_m(S)$ is a disjoint $S$-divisor, $s_1=s$,
\item the divisor $\sum\limits_{i\geqslant 2} s_i(S)-\sum t_m(S)$ is disjoint from $\sum y_j(S)$.
\end{itemize}
\end{lem}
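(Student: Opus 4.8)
The plan is to construct $\varphi$ as a ratio $F_1/F_0$ of two homogeneous polynomials in a suitable projective embedding of $\overline{\mathfrak{X}}$, mimicking the construction in Lemma~\ref{3} but now keeping careful control of the zero locus so that it is an $S$-divisor disjoint from all the given $S$-points and from the section $s$ itself. First I would fix a very ample divisor $D$ on $\overline{\mathfrak{X}}$ supported inside $\mathfrak{X}$ and an embedding $\alpha\colon\overline{\mathfrak{X}}\hookrightarrow\mathbb{P}^n_S$ in which $D=\overline{\mathfrak{X}}\cap Z(H)$ for a linear form $H\in\mathcal{O}[z_0,\ldots,z_n]$, chosen (using Corollary~\ref{general position corollary} applied to the finitely many closed points $s(0)$, $y_j(0)$, $x_i(0)$ coming from $s$, the $y_j$, and the components $x_i$ of $\mathfrak{X}_\infty$) so that $Z(H)$ avoids all of these points; set $F_0=H^d$ for $d$ large. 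The denominator divisor is then $dD$, which lies in $\mathfrak{X}$ and is disjoint from $\mathfrak{X}_\infty$, from $s$, and from the $y_j$.

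Next I would choose the numerator $F_1$. Applying Corollary~\ref{general position corollary} (or Lemma~\ref{general position lemma}) over the residue field $k$ to the embedding $\overline{X}\hookrightarrow\mathbb{P}^n_k$ and the closed points $\overline{s(S)}$, $\overline{y_j(S)}$, $\overline{x_i(S)}$, I produce a degree $d$ homogeneous $f_1\in k[t_0,\ldots,t_n]$ whose zero scheme passes through $\overline{s(0)}$, is transversal to $\overline{X}$, and avoids $Z(\overline{H})\cap\overline X$, the $\overline{y_j(0)}$, and the $\overline{x_i(0)}$; I also arrange $f_1(\overline{x_i})=\overline{H}^d(\overline{x_i})$ at the points $\overline x_i$ of $\mathfrak X_\infty$. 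Lifting by Corollary~\ref{corollary1}, I get $F_1\in\mathcal{O}[z_0,\ldots,z_n]$ of degree $d$ with $\overline{F_1}=f_1$, with $F_1(\xi_i)=H^d(\xi_i)$ on $\mathfrak{X}_\infty$, and (after a further application of Corollary~\ref{corollary1}, or by building the value into $f_1$) with $F_1$ vanishing along $s(S)$. Then $\varphi=F_1/F_0$ is defined on all of $\overline{\mathfrak{X}}$ because $Z(F_0)\cap Z(F_1)$ misses $\overline{X}$ hence misses $\overline{\mathfrak{X}}$ by Corollary~\ref{disjoint_subschemes}, and $\varphi\equiv 1$ on $\mathfrak{X}_\infty$ since $F_1=F_0$ there.

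The divisor of $\varphi$ is $\operatorname{div}_0(F_1)\cdot\overline{\mathfrak{X}}-dD$. By construction $Z(F_1)$ meets $\overline{X}$ transversally, so by Lemma~\ref{lemma_regularity} and Lemma~\ref{2} the scheme $Z(F_1)\cap\overline{\mathfrak{X}}$ is étale over $S$, and Corollary~\ref{cor_disjoint_union} writes it as a disjoint union $\coprod s_i(S)$ of $S$-points; since $Z(F_1)$ vanishes at $s(0)$, one of the $s_i$ is $s$ after relabelling, giving $s_1=s$. Likewise $dD$ is an $S$-divisor (its support is already étale over $S$ by the same lemmas applied to the transversal hyperplane section, shrinking if necessary), so $\operatorname{div}\varphi=\sum s_i(S)-\sum t_m(S)$ is an $S$-divisor; disjointness of its distinct components follows from Corollary~\ref{disjoint_subschemes} together with transversality, which forces the corresponding closed points of $\overline X$ to be distinct. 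Finally, since $f_1$ was chosen to avoid the points $\overline{y_j(0)}$ and $Z(\overline H)$ avoids them too, each component of $\sum_{i\ge 2}s_i(S)-\sum t_m(S)$ meets $\overline X$ in a point distinct from every $\overline{y_j(0)}$, so by Corollary~\ref{disjoint_subschemes} this divisor is disjoint from $\sum y_j(S)$. The main obstacle is the bookkeeping in the general-position step: one must simultaneously impose transversality to $\overline X$, prescribed values at the $\mathfrak X_\infty$-points, a prescribed zero at $s(0)$, and avoidance of all the $\overline{y_j(0)}$ and of $Z(\overline H)\cap\overline X$ — this is exactly the kind of finite list of independent linear and open conditions that Lemma~\ref{general position lemma} is designed to handle, but checking that the "zero at $s(0)$" condition is compatible with (not implied to contradict) the transversality and nonvanishing conditions requires that $d\ge r$ be taken large enough and that $s(0)$ not be forced to coincide with any $x_i(0)$ or $y_j(0)$, which one ensures at the outset by a generic choice of coordinates.
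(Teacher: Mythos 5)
Your construction of the numerator follows the paper's proof quite closely (general position over the closed fibre via Lemma~\ref{general position lemma}, lifting with prescribed values at $s$ and at the points of $\mathfrak{X}_\infty$ via Corollary~\ref{corollary1}, then \'etaleness and the decomposition into sections via Lemmas~\ref{lemma_regularity}, \ref{2}, \ref{lemma_disjoint_union} and disjointness via Corollary~\ref{disjoint_subschemes}). But your choice of denominator $F_0=H^d$ creates a genuine gap: the pole divisor of $\varphi=F_1/H^d$ on $\overline{\mathfrak{X}}$ is then $d\cdot\bigl(Z(H)\cap\overline{\mathfrak{X}}\bigr)$, i.e.\ every pole component occurs with multiplicity $d$. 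Since you must take $d\geqslant r$ (and large) to carry out the interpolation at the $r$ points of $\mathfrak{X}_\infty$ together with the vanishing at $s$, you cannot take $d=1$, so $div\,\varphi$ is \emph{not} of the form $\sum s_i(S)-\sum t_m(S)$ with coefficients $\pm 1$, hence not a disjoint $S$-divisor in the sense the lemma (and its later use) requires. This is not a cosmetic point: in the proof of Lemma~\ref{5} the lemma is used to lower the multiplicity of $s_1(S)$ by exactly $1$ at each step while adding only a multiplicity-one divisor disjoint from $D$; a pole divisor of multiplicity $d$ destroys that bookkeeping.

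The paper avoids this by making the denominator a \emph{second} general-position form rather than a power of a linear form: one chooses $g$ of degree $d$ with $Z(g)$ transversal to $\overline{X}$, nonvanishing on $\overline{X}\cap Z(f)$, at the $\overline{y_j}$ and at the $\overline{x_i}$, with $g(\overline{x_i})=f(\overline{x_i})$, and lifts it to $G$ with $G(\xi_i)=F(\xi_i)$ so that $\varphi=F/G$ is $1$ on $\mathfrak{X}_\infty$. Then the pole divisor $Z(G)\cap\overline{\mathfrak{X}}$ is again reduced, \'etale over $S$, and a disjoint union of sections $t_m(S)$, which is exactly the multiplicity-one statement your version lacks. (A smaller remark: your requirement that $f_1$ avoid all the $\overline{y_j(0)}$ while vanishing at $\overline{s(0)}$ needs the caveat that those $y_j$ whose closed point equals $\overline{s}$ be excluded --- the intended application takes $s$ itself among the $y_j$ --- but this affects only the components with $i\geqslant 2$ and is the same implicit convention the paper uses.) With the denominator replaced as above, your argument becomes the paper's proof.
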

\begin{proof}
Consider a closed embedding $\overline{\mathfrak{X}}\hookrightarrow\mathbb{P}^n_S$. We will construct $\varphi$ in the form $\frac{F}{G}$, where $F$ and $G$ are in $\Gamma(\mathbb{P}^n_S, \mathcal{O}(d)$ for some d.
Corollary~\ref{X infty structure} implies that $\mathfrak{X}_{\infty}=\coprod x_i(S)$ for some set of $S$ points $x_i\colon S\to\overline{\mathfrak{X}},$ $i=1\ldots r.$
Denote by $\overline{x_i}$, $\overline{y_j}$, $\overline{s}$ the corresponding closed points $\text{Spec } k\to\overline{X}.$

{\bf Step 1.} We construct the polynomial $F$. For any $d\geqslant r$ by lemma~\ref{general position lemma} there is $f\in\Gamma(\mathbb{P}^n_k,\mathcal{O}(d))$ such that
\begin{itemize}
\item $f(\overline{s})=0$,
\item $f(\overline{y_j})\neq 0$ and $f(\overline{x_i})\neq 0$ for $j=1\ldots k,$ $i=1\ldots r.$
\item $Z(f)$ intersects $\overline{X}$ transversally in $\mathbb{P}^n_k.$
\end{itemize}
Then take $F\in \Gamma(\mathbb{P}^n_S, \mathcal{O}(d)$ such that $\overline{F}=f$ and $F(s)=0.$ This homogeneous polynomial exists by corollary~\ref{corollary1}

{\bf Step 2.} We construct the polynomial $G$. By lemma~\ref{general position lemma} there is a homogeneous polynomial $g\in k[z_0,\ldots,z_n]$ of degree $d$ such that:
\begin{itemize}
\item $g(y)\neq 0$ for all $y\in \overline{X}\cap Z(f)$, for $y=\overline{y_j}$ and for $y=\overline{x_i},$ $i=1\ldots r$ and $j=1\ldots k.$
\item $g(\overline{x_i})=f(\overline{x_i})$
\item $Z(g)$ intersects $\overline{X}$ transversally
\end{itemize}
By corollary~\ref{corollary1}, there is a homogeneous polynomial $G$ of degree $d$ such that
$\overline{G}=g$ and $G(x_i)=F(x_i)$. Then for all $x_i$ $\varphi$ is defined on $x_i$ and $\varphi(x_i)=1$

\noindent Now consider the divisor of $\varphi$ on $\overline{\mathfrak{X}}$

\noindent
Note that
\[
(\overline{\mathfrak{X}}\times_{\mathbb{P}^n_S}Z(F))\times_{\overline{\mathfrak{X}}}\overline{X}=Z(F)\times _{\mathbb{P}^n_S}\overline{X}=(Z(F)\times_{\mathbb{P}^n_S}\mathbb{P}^n_k)\times _{\mathbb{P}^n_k}\overline{X}=Z(f)\times _{\mathbb{P}^n_k}\overline{X}.
\]
Since $Z(f)$ intersects $\overline{X}$ transversally, we have that $\overline{\mathfrak{X}}\times_{\mathbb{P}^n_S} Z(F)$ intersects $\overline{X}$ transversally. Then by lemma \ref{2}, the scheme
$\varphi^{-1}(0)$ is \'etale over $S$. Then lemma~\ref{lemma_disjoint_union} implies that $\varphi^{-1}(0)=\coprod s_i(S)$ for some sections $s_i:S\to\mathfrak{X}.$
Since $F(s)=0$, we have that one of the $s_i,$ say $s_1$ equals $s.$ Then
 $div_0(\varphi)=\sum s_i(S)$.

The same reasoning proves that the scheme $Z(G)\times_{\mathbb{P}^n_S}\overline{\mathfrak{X}}$ intersects $\overline{X}$ transversally. Then it is \'etale over  $S$ by lemma~\ref{2}
Then by lemma~\ref{lemma_disjoint_union}  $Z(G)\times_{\mathbb{P}^n_S}\overline{\mathfrak{X}}=\coprod t_m(S)$.
Then $div_{\infty}(\varphi)=\sum t_m(S)$.

Then $div(\varphi)=div_0(\varphi)-div_{\infty}(\varphi)$ is a disjoint $S$-divisor.

By the construction of $\varphi$, for $i\geqslant 2$ $\overline{s_i}$ is distinct from all $\overline{y_j}$, and for all $m$ $\overline{t_m}$ is disjoint from all $\overline{y_j}$.
Then by corollary~\ref{cor_disjoint_union} the divisor $\sum_{i\geqslant 2}s_i(S)-\sum t_m(S)$ is disjoint from all $y_j(S)$.
\end{proof}

\begin{lem}\label{5}
Consider a map
\[
\Phi\colon\bigoplus_{s\colon S\rightarrow \mathfrak{X}}\mathbb{Z}\cdot s \longrightarrow Pic(\overline{\mathfrak{X}},\mathfrak{X}_{\infty})
\]
\[
s\mapsto s(S)
\]
Then
\begin{itemize}
\item $\Phi$ is surjective.
\item Kernel of $\Phi$ is generated by elements
$D=\sum s_i(S)-\sum t_j(S)$ for some finite sets of sections $\{s_i\}$ and $\{t_j\}$ of $\mathfrak{X}\to S$ such that there is a regular morphism
$f\colon\overline{\mathfrak{X}}\rightarrow \mathbb{P}^1_{S}$ with the following properties:
\begin{itemize}
\item[(i)] $f$ is \'etale over $0\times S$ and $\infty\times S$,
\item[(ii)] $\mathfrak{X}_{\infty}\subseteq f^{-1}(1\times S)$
\item[(iii)]$f^{-1}(0\times S)=\coprod s_i(S)$, $f^{-1}(\infty\times S)=\coprod t_j(S)$.
\end{itemize}
\end{itemize}

\end{lem}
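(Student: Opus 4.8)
The plan is to prove the two assertions separately, relying on the preliminary lemmas about the relative Picard group and the structure of divisors on $\overline{\mathfrak{X}}$.

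\medskip

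\textbf{Surjectivity of $\Phi$.} Recall that an element of $Pic(\overline{\mathfrak{X}},\mathfrak{X}_\infty)$ is represented by a pair $(\mathcal{L},\varphi)$ of a line bundle on $\overline{\mathfrak{X}}$ together with a trivialization over $\mathfrak{X}_\infty$, and since $S$ is local (Henselian), $\overline{\mathfrak{X}}$ is a curve over $S$ whose closed points all lie in $\overline{X}$ by lemma~\ref{1.7}. The strategy is: given such a pair, choose a very ample divisor $D\subseteq\mathfrak{X}$ on $\overline{\mathfrak{X}}$ that is disjoint from $\mathfrak{X}_\infty$, and twist $\mathcal{L}$ by a high power of $\mathcal{O}(D)$ so that the resulting bundle has a section vanishing on a divisor supported in $\mathfrak{X}\setminus\mathfrak{X}_\infty$ and meeting $\overline{X}$ transversally; here lemma~\ref{general position lemma} (via corollary~\ref{general position corollary}) produces the needed homogeneous polynomial over $\mathcal{O}$, and lemma~\ref{2} together with lemma~\ref{lemma_disjoint_union} (corollary~\ref{cor_disjoint_union}) shows that such a transversal divisor is automatically a sum of sections $\sum s_i(S)$. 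Doing the same for the class $[D]$ itself, we see every generator $\mathcal{O}(D)$ of the Picard group of a curve is, modulo the relative structure, a $\mathbb{Z}$-combination of $S$-points, so $\Phi$ is surjective. The slight subtlety is bookkeeping the trivialization at infinity, but this is handled by insisting (as in the construction in lemma~\ref{lemma fixed point divisor}) that the chosen sections are disjoint from $\mathfrak{X}_\infty$, so that the trivialization over $\mathfrak{X}_\infty$ is canonical.

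\medskip

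\textbf{Description of the kernel.} An element $D=\sum n_i s_i(S)$ lies in $\ker\Phi$ iff $\mathcal{O}_{\overline{\mathfrak{X}}}(D)$ is trivial together with a trivialization matching the given one on $\mathfrak{X}_\infty$, i.e. iff there is a rational function $f$ on $\overline{\mathfrak{X}}$ with $div(f)=D$ and $f\equiv 1$ on $\mathfrak{X}_\infty$. So the kernel is generated by principal divisors of such functions. The heart of the argument is to show that the subgroup generated by the \emph{special} elements described in the statement — those for which $f$ extends to an $S$-morphism $\overline{\mathfrak{X}}\to\mathbb{P}^1_S$ étale over $0\times S$ and $\infty\times S$, with $\mathfrak{X}_\infty\subseteq f^{-1}(1\times S)$ — is all of $\ker\Phi$. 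Given an arbitrary $D=div(f)$ with $f|_{\mathfrak{X}_\infty}=1$, apply lemma~\ref{lemma fixed point divisor} repeatedly: for each "bad" section in $div(f)$ (not disjoint from the rest, or colliding with $\mathfrak{X}_\infty$) multiply $f$ by an auxiliary rational function $\varphi$ whose divisor is a disjoint $S$-divisor, equal to $1$ on $\mathfrak{X}_\infty$, with one prescribed component and the rest disjoint from any chosen finite set of $S$-points. Each such $\varphi$ has disjoint $S$-divisor, hence by lemma~\ref{4} extends to $\overline{\mathfrak{X}}\to\mathbb{P}^1_S$ étale over $0\times S$ and $\infty\times S$; moreover since $\varphi\equiv 1$ on $\mathfrak{X}_\infty$, it contributes a special kernel element (possibly after grouping, since one needs $\mathfrak{X}_\infty\subseteq f^{-1}(1\times S)$, which is exactly what "$\varphi\equiv 1$ on $\mathfrak{X}_\infty$" gives). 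Multiplying $f$ by a suitable product of such $\varphi$'s one reaches a function whose divisor is a disjoint $S$-divisor disjoint from $\mathfrak{X}_\infty$, so that lemma~\ref{4} applies directly to $f$ itself. Expressing $div(f)$ as (the modified disjoint divisor) minus (the sum of the divisors of the $\varphi$'s), and noting each summand is a special kernel element, shows $D$ is in the subgroup generated by special elements.

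\medskip

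\textbf{Main obstacle.} The genuine difficulty is not the Picard-group formalism but ensuring that the disjointness and transversality conditions can be met \emph{simultaneously and over the base $\mathcal{O}$}, not just over the residue field $k$. This is precisely where lemma~\ref{lemma fixed point divisor} and the lifting corollary~\ref{corollary1} do the work: transversality and general position are first arranged over $k$ by the Bertini-type lemma~\ref{general position lemma}, then lifted to $\mathcal{O}$, and lemma~\ref{2} converts transversal-with-$\overline{X}$ into étale-over-$S$, which by lemma~\ref{lemma_disjoint_union} forces the divisor to split as a disjoint union of sections. The careful part of the proof is the inductive clean-up: each application of lemma~\ref{lemma fixed point divisor} must be set up with the auxiliary point set $\{y_j\}$ large enough to include all sections currently appearing (in $div(f)$, in $\mathfrak{X}_\infty$, and in the previously introduced $\varphi$'s), so that the collisions are removed one at a time without creating new ones; I would present this as a finite induction on the number of "non-disjoint" components, with the invariant that $f\cdot\prod\varphi$ always restricts to $1$ on $\mathfrak{X}_\infty$.
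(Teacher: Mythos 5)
Your proposal is correct and follows essentially the same route as the paper: surjectivity by twisting with a high power of an ample divisor and producing a section whose vanishing locus is transversal to $\overline{X}$ and misses $\mathfrak{X}_\infty$ (this is exactly the content of lemma~\ref{3} together with the difference trick $D=(d+1)D-dD$, plus lemmas~\ref{2} and~\ref{lemma_disjoint_union}), and the kernel by the same clean-up using the auxiliary functions of lemma~\ref{lemma fixed point divisor} followed by the extension lemma~\ref{4}. The only cosmetic difference is that you phrase the clean-up multiplicatively (multiplying $f$ by the $\varphi$'s), whereas the paper subtracts the corresponding divisors $D_i$ one multiplicity at a time.
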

\begin{proof}
(1) Let us prove that $\Phi$ is surjective.

(1.1) Consider $[Z]\in Pic(\overline{\mathfrak{X}},\mathfrak{X}_{\infty})$
such that $Z$ intersects $X$ transversally by finite number of points. By lemma \ref{2} $\pi\colon Z\to S$ is \'etale.
Then corollary~\ref{cor_disjoint_union} implies that $Z=\coprod s_i(S)$ for some set of sections $s_i$. Then $[Z]\in Im \Phi$.

(1.2) Now consider a case when $D=\sum n_i[Z_i]\in Pic(\overline{\mathfrak{X}},\mathfrak{X}_{\infty})$ is a very ample divisor.
We will prove that for big enough $d$, the divisor $dD$ is equivalent in
$Pic(\overline{\mathfrak{X}},\mathfrak{X}_{\infty})$ to some divisor $\sum n_i'Z_i'$ such that all $Z_i'$ intersect $X$ transversally.
By lemma \ref{3} there is a morphism $f\colon\overline{\mathfrak{X}}\to \mathbb{P}^1_k$ such that
\begin{itemize}
\item $\mathfrak{X}_{\infty}\subseteq f^{-1}(1)$
\item $div_0(f)=dD$
\item Scheme-theoretic preimage $f^{-1}(\infty)$ intersects $\overline{X}$ transversally.
\end{itemize}
Then the divisor $dD$ is equivalent to $div_{\infty}(f)$ in $Pic(\overline{\mathfrak{X}},\mathfrak{X}_{\infty})$. By the case 1.1, divisor $div_{\infty}(f)$ is in $Im \Phi$. The same reasoning shows that
$(d+1)D$ is in $Im \Phi$. Then $D=(d+1)D-dD$ is in $Im \Phi$. Since $Pic(\overline{\mathfrak{X}},\mathfrak{X}_{\infty})$ is generated by very ample divisors, we have that
$\Phi$ is surjective.

(2) Consider the kernel of $\Phi$.

\noindent For any divisor $D=\sum n_is_i(S)\in\ker\Phi$ the lemma~\ref{lemma fixed point divisor} implies that there is a disjoint $S$-divisor $D_1$ in $\ker\Phi$ such that
$D_1=s_1(S)+\sum s_i'(S) - \sum t_j'(S)$
and  the divisor $\sum s_i'(S) - \sum t_j'(S)$ is disjoint from $D.$

Then $$D-D_1=(n_1-1)s_1(S)+\sum_{i\geqslant 2}s_i(S)-\sum m_j t_j(S)-\sum s_i'(S)+\sum t_j'(S).$$
Note that we have reduced by 1 the multiplicity of $s_1(S)$ in $D$ and added a disjoint $S$-divisor that is disjoint from $D$.

Iterating this procedure we reduce all multiplicities $n_i$ and $m_j$ and on some step $k$ we will get $D-D_1-\ldots-D_k=\widetilde{D}$ where $\widetilde{D}\in\ker\Phi$ is
a disjoint $S$-divisor
Since $D_i$, $i=1\ldots k$ and $\widetilde{D}$ lie in $\ker\Phi,$ there are rational functions $f_i$, $i=1\ldots k$ and $\widetilde{f}$ such that $f_i(\mathfrak{X}_{\infty})=1,$
$\widetilde{f}(\mathfrak{X}_{\infty})=1$ and $div f_i=D_i$ and $div \widetilde{f}=\widetilde{D}.$ Then by lemma~\ref{4} each $f_i$ and $\widetilde{f}$ can be extended to the regular morphisms
$f_i,\widetilde{f}\colon\overline{\mathfrak{X}}\to\mathbb{P}^1_S$ possessing the properties (i)-(iii). Then $$D=D_1+\ldots D_k+\widetilde{D}$$ is the desired decomposition of $D$ in $\ker\Phi.$

\end{proof}

\begin{lem}\label{6}
Consider a pairing:
$$\bigoplus_{s\colon S\rightarrow \mathfrak{X}} \mathbb{Z}\cdot s\otimes \mathcal{F}(\mathfrak{X})\longrightarrow \mathcal{F}(S)\text{ defined by }s\otimes f\mapsto s^*(f).$$
Then this pairing can be decomposed in the following way:
$$\xymatrix{
    \bigoplus\limits_{s\colon S\rightarrow \mathfrak{X}} \mathbb{Z}\cdot s\otimes \mathcal{F}(\mathfrak{X}) \ar[rd]_{} \ar[rr]^{} &&  \mathcal{F}(S)\\
                        & Pic(\overline{\mathfrak{X}},\mathfrak{X}_{\infty})\otimes\mathcal{F}(\mathfrak{X})\ar[ru]
}
$$
\end{lem}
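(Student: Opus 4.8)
The plan is to pass to the dual formulation. Giving a pairing $\bigoplus_{s\colon S\to\mathfrak{X}}\mathbb{Z}\cdot s\otimes\mathcal{F}(\mathfrak{X})\to\mathcal{F}(S)$ is the same as giving a homomorphism $\psi\colon\bigoplus_{s}\mathbb{Z}\cdot s\to\mathrm{Hom}_{\mathbb{Z}}(\mathcal{F}(\mathfrak{X}),\mathcal{F}(S))$, and here $\psi(s)=s^{*}=\mathcal{F}(s)$. By Lemma~\ref{5} the map $\Phi$ is surjective, so to obtain the asserted factorization through $Pic(\overline{\mathfrak{X}},\mathfrak{X}_{\infty})\otimes\mathcal{F}(\mathfrak{X})$ it suffices to prove that $\psi$ vanishes on $\ker\Phi$; tensoring the resulting factorization $\bigoplus_{s}\mathbb{Z}\cdot s\twoheadrightarrow Pic(\overline{\mathfrak{X}},\mathfrak{X}_{\infty})\to\mathrm{Hom}_{\mathbb{Z}}(\mathcal{F}(\mathfrak{X}),\mathcal{F}(S))$ with $\mathcal{F}(\mathfrak{X})$ and undoing the hom--tensor adjunction then gives exactly the triangle in the statement. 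By Lemma~\ref{5} the group $\ker\Phi$ is generated by elements $D=\sum_{i}s_{i}(S)-\sum_{j}t_{j}(S)$ for which there is a regular morphism $f\colon\overline{\mathfrak{X}}\to\mathbb{P}^{1}_{S}$ satisfying (i)--(iii); so the whole lemma reduces to the claim that for every such $D$ one has $\sum_{i}s_{i}^{*}=\sum_{j}t_{j}^{*}$ as maps $\mathcal{F}(\mathfrak{X})\to\mathcal{F}(S)$.

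To prove this identity I would manufacture the $\Omega$-transfer attached to $f$. Write $\mathbb{A}^{1}_{S}:=\mathbb{P}^{1}_{S}\setminus(1\times S)$, which is an affine line over $S$, and set $W:=f^{-1}(\mathbb{A}^{1}_{S})$. Property (ii) gives $\mathfrak{X}_{\infty}\subseteq f^{-1}(1\times S)$, hence $W\subseteq\mathfrak{X}$ and $W$ is open in $\mathfrak{X}$. Next I would check that $f|_{W}\colon W\to\mathbb{A}^{1}_{S}$ is finite (it is the restriction of the finite morphism $f$), surjective (a nonconstant proper morphism of relative curves over $S$ is surjective; the degenerate case $D=0$ is trivial), flat (miracle flatness: $W$ is open in the smooth $S$-scheme $\overline{\mathfrak{X}}$, hence regular, $\mathbb{A}^{1}_{S}$ is regular, and the finite morphism is equidimensional), and l.c.i.\ (both $W$ and $\mathbb{A}^{1}_{S}$ are smooth over $S$, so this follows as in Remark~\ref{remark}; equivalently, $W$ sits as a regular, hence regularly embedded, closed subscheme of some $\mathbb{A}^{n}_{\mathbb{A}^{1}_{S}}$). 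Thus $\mathbb{A}^{1}_{S}\xleftarrow{\,f|_{W}\,}W\hookrightarrow\mathfrak{X}$ is an elementary triple; let $\tau\in Hom_{\mathcal{C}}(\mathbb{A}^{1}_{S},\mathfrak{X})$ be its class, so that we obtain $\mathcal{F}(\tau)\colon\mathcal{F}(\mathfrak{X})\to\mathcal{F}(\mathbb{A}^{1}_{S})$. (All of $\overline{\mathfrak{X}},\mathfrak{X},W,f$ are pulled back from some finite level $S_{\alpha}$ of the pro-object $S$, where $\mathbb{A}^{1}_{S_{\alpha}}\in\text{\bf{Sm}}_{k}$; the entire argument is run at that level and then one passes to the colimit that defines $\mathcal{F}$ on pro-objects.)

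I would then compute the two composites $0^{*}\circ\mathcal{F}(\tau)$ and $\infty^{*}\circ\mathcal{F}(\tau)$, using the $S$-points $0,\infty\colon S\to\mathbb{A}^{1}_{S}$ (the restrictions of $0\times S$ and $\infty\times S$, which lie in $\mathbb{A}^{1}_{S}$ since we removed $1\times S$). By Definition~\ref{def_of_composition}, the composite $\tau\circ[0]$ in $\mathcal{C}$ is the triple $S\leftarrow S\times_{\mathbb{A}^{1}_{S}}W\to\mathfrak{X}$; by property (iii) one has $S\times_{\mathbb{A}^{1}_{S}}W=f^{-1}(0\times S)=\coprod_{i}s_{i}(S)$, the projection to $S$ is an isomorphism on each component, and the map to $\mathfrak{X}$ is the section $s_{i}$ on the $i$-th component, so $\tau\circ[0]=\sum_{i}[s_{i}]$ in $\mathcal{C}$ and hence $0^{*}\circ\mathcal{F}(\tau)=\sum_{i}s_{i}^{*}$. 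By the same computation with $\infty$ in place of $0$, $\infty^{*}\circ\mathcal{F}(\tau)=\sum_{j}t_{j}^{*}$. Finally, homotopy invariance of $\mathcal{F}$ (Definition~\ref{def_of_homotopy_invariance}) forces $0^{*}=\infty^{*}$ on $\mathcal{F}(\mathbb{A}^{1}_{S})$: if $p\colon\mathbb{A}^{1}_{S}\to S$ is the projection then $p^{*}$ is an isomorphism and any section $\sigma$ of $p$ satisfies $\sigma^{*}\circ p^{*}=\mathrm{id}$, whence $\sigma^{*}=(p^{*})^{-1}$ does not depend on $\sigma$. Composing with $\mathcal{F}(\tau)$ yields $\sum_{i}s_{i}^{*}=\sum_{j}t_{j}^{*}$, which is what we wanted, so $\psi$ kills $\ker\Phi$ and the lemma follows.

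I expect the main obstacle to be the verification that $f|_{W}$ is an admissible left leg of an elementary triple --- that it is flat and l.c.i.\ --- together with the bookkeeping needed to descend the diagram $(\overline{\mathfrak{X}},\mathfrak{X},W,f)$ to a finite level of the pro-object $S$ so that homotopy invariance in the form of Definition~\ref{def_of_homotopy_invariance} applies. I also want to be careful that the identification $\tau\circ[0]=\sum_{i}[s_{i}]$ uses property (iii) in its scheme-theoretic form (equivalently, property (i), the étaleness over $0\times S$ and $\infty\times S$, which is what makes the fibres reduced with all multiplicities equal to $1$); everything else --- finiteness, surjectivity, and the homotopy-invariance input --- should be routine.
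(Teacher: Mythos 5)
Your argument is correct and is essentially the paper's own proof: the paper likewise reduces via Lemma~\ref{5} to the generators equipped with a morphism $f\colon\overline{\mathfrak{X}}\to\mathbb{P}^1_S$, sets $\mathfrak{X}_1=f^{-1}(\mathbb{P}^1_S\setminus 1\times S)$ (your $W$), forms the transfer triple $(\mathbb{A}^1_S\leftarrow\mathfrak{X}_1\to\mathfrak{X}_1)$, identifies its composites with $i_0$ and $i_\infty$ with $\sum[s_i]$ and $\sum[t_j]$ by the same fibre-product computation using (iii), and concludes from homotopy invariance via $\mathcal{F}(i_0)=\mathcal{F}(i_\infty)=(\mathcal{F}(p))^{-1}$. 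The only point you take for granted that the paper verifies is the finiteness of $f$ itself (quasi-finite on the closed fibre, hence quasi-finite since $S$ is local, and projective, hence finite), after which your remaining checks (flatness, l.c.i., descent to a finite level of the pro-object) are sound.
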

\begin{proof}
By lemma \ref{5}, it is sufficient to prove that for every
$f\colon\overline{\mathfrak{X}}\to \mathbb{P}^1_S$ such that:
\begin{itemize}
\item $\mathfrak{X}_{\infty}\subseteq f^{-1}(1\times S)$,
\item $f$ is \'etale over $0\times S$ and $\infty \times S$,
\item $f^{-1}(0\times S)=\coprod s_i(S)$,
\item $f^{-1}(\infty\times S)=\coprod t_j(S)$
\end{itemize}
The following equality holds:

$\sum \mathcal{F}(s_i) - \sum \mathcal{F}(t_j)=0$ as a morphism $\mathcal{F}(\mathfrak{X})\to\mathcal{F}(S) $

Note that the closed fiber restriction $f|_{\overline{X}}\colon\overline{X}\to\mathbb{P}^1_k$ is a quasifinite morphism.
Since $S$ is local, then $f\colon\overline{\mathfrak{X}}\to\mathbb{P}^1_S$ is quasifinite. Since $f$ is projective and quasifinite, then $f$ is finite. Take $\mathbb{A}^1_S=\mathbb{P}^1_S\backslash 1\times S$ and
consider $\mathfrak{X}_1=\overline{\mathfrak{X}} \times_{\mathbb{P}^1_S}\mathbb{A}^1_S$. Then $\mathfrak{X}_1$ is open subscheme in $\mathfrak{X}$, smooth over $S$ and $f|_{\mathfrak{X}_1}\colon\mathfrak{X}_1\to\mathbb{A}^1_S$ is finite and surjective.
Note that the following diagram commutes:
$$\xymatrix{
  \coprod s_i(S)\ar@{^{(}->}[rr]^{I_0}\ar[d]^{f|_{\coprod s_i(S)}} && \mathfrak{X}_1\ar[d]^{f|_{\mathfrak{X}_1}} && \coprod t_j(S)\ar@{_{(}->}[ll]_{I_\infty}\ar[d]^{f|_{\coprod t_j(S)}} \\
  S\ar@{^{(}->}[rr]^{i_0} && \mathbb{A}^1_S &&  S \ar@{_{(}->}[ll]_{i_\infty}
}
$$
All vertical arrows are finite surjective flat l.c.i. morphisms, so there are the following morphisms in the category $\mathcal{C}$:

\noindent $\phi\in Hom_{\mathcal{C}}(\mathbb{A}^1_S,\mathfrak{X}_1)$ represented by the triple $(\mathbb{A}^1_S\stackrel{}\leftarrow\mathfrak{X}_1\stackrel{id}\to\mathfrak{X}_1),$
$\alpha_0\in Hom_{\mathcal{C}}(S,\coprod s_i(S))$ represented by $(S\leftarrow\coprod s_i(S)\to\coprod s_i(S)),$ and $\alpha_1\in Hom_{\mathcal{C}}(S,\coprod t_j(S))$ represented by
$(S\leftarrow\coprod t_j(S)\to\coprod t_j(S)).$

Then we have the following diagram in $\Omega$-category:
$$\xymatrix{
  \coprod s_i(S)\ar@{^{(}->}[rr]^{I_0} && \mathfrak{X}_1 && \coprod t_j(S)\ar@{_{(}->}[ll]_{I_\infty} \\
  S\ar@{^{(}->}[rr]^{i_0}\ar^{\alpha_0}[u] && \mathbb{A}^1_S\ar^{\phi}[u] &&  S \ar@{_{(}->}[ll]_{i_\infty}\ar^{\alpha_{\infty}}[u]
}
$$
Let us prove that this diagram commutes:
\noindent The composition $I_0\circ\alpha_0$
is represented by the triple $(S\leftarrow\coprod s_i(S)\to \mathfrak{X}_1)$

\noindent The composition $\phi\circ i_0$ is represented by the triple $(S\leftarrow S\times_{\mathbb{A}^1_S}\mathfrak{X}_1\to \mathfrak{X}_1)$
Since $S \times_{\mathbb{A}^1_S}\mathfrak{X}_1=\coprod s_i(S)$, the left square commutes. Analogously, the right square commutes.
Applying the functor $\mathcal{F}$ we get the following commutative diagram:
$$\xymatrix{
  \mathcal{F}(\coprod s_i(S))\ar[d]^{\mathcal{F}(\alpha_0)} && \mathcal{F}(\mathfrak{X}_1)\ar[ll]_{\mathcal{F}(I_0)}\ar[rr]^{\mathcal{F}(I_{\infty})}\ar[d]^{\mathcal{F}(\phi)} && \mathcal{F}(\coprod t_j(S))\ar[d]^{\mathcal{F}(\alpha_{\infty})} \\
  \mathcal{F}(S) && \mathcal{F}(\mathbb{A}^1_S)\ar[rr]^{\mathcal{F}(i_\infty)}\ar[ll]_{\mathcal{F}(i_0)} &&  \mathcal{F}(S)
}
$$
Denote by $p$ the projection $p:\mathbb{A}^1_S\to S.$ The functor $\mathcal{F}$ is homotopy invariant and $i_0$ and $i_1$ are sections of $p.$ Then $\mathcal{F}(i_0)=\mathcal{F}(i_{\infty})=(\mathcal{F}(p))^{-1}.$
Then $$\mathcal{F}(\alpha_0)\circ \mathcal{F}(I_0)=\mathcal{F}(\alpha_{\infty})\circ\mathcal{F}(I_\infty)$$

\noindent We will prove that $\mathcal{F}(\alpha_0)\circ \mathcal{F}(I_0)=\sum \mathcal{F}(s_i)$ and $\mathcal{F}(\alpha_{\infty})\circ \mathcal{F}(I_{\infty})=\sum \mathcal{F}(t_j)$ in $Hom_{\text{\bf{Ab}}}(\mathcal{F}(\mathfrak{X}_1),\mathcal{F}(S))$

Consider $\alpha_{0,(i)}\in Hom_{\mathcal{C}}(S,\coprod s_i(S))$ represented by the triple
$(S\leftarrow s_i(S)\to\coprod s_i(S)).$ Then $$\alpha_0=\sum\alpha_{0,(i)}$$
Each morphism $I_0\circ\alpha_{0,(i)}$ is represented by the triple $(S\leftarrow s_i(S)\to\mathfrak{X}_1)$ Note that $\pi|_{s_i(S)}\colon s_i(S)\to S$ is an isomorphism with $s_i$ being is its
inverse. Then $$I_0\circ\alpha_{0,(i)}=s_i \text{ in } Hom_{\mathcal{C}}(S,\mathfrak{X}_1).$$
Then $$\mathcal{F}(\alpha_0)\circ \mathcal{F}(I_0)=\mathcal{F}(I_0\circ\alpha_0)=\mathcal{F}(\sum I_0\circ\alpha_{0,(i)})=\sum\mathcal{F}(I_0\circ\alpha_{0,(i)})=\sum\mathcal{F}(s_i)$$
The same reasoning proves that $\mathcal{F}(\alpha_{\infty})\circ \mathcal{F}(I_{\infty})=\sum\mathcal{F}(t_j)$.

Then $\sum\mathcal{F}(s_i)=\sum\mathcal{F}(t_j)\colon\mathcal{F}(\mathfrak{X}_1)\to\mathcal{F}(S)$. Taking composition with the inclusion $\mathfrak{X}_1\hookrightarrow \mathfrak{X}$, we get
\[\sum\mathcal{F}(s_i)=\sum\mathcal{F}(t_j)\colon\mathcal{F}(\mathfrak{X})\to\mathcal{F}(S).\qedhere\]
\end{proof}

\section{Proof of the main result}\label{main result}
\begin{thm}
Let $k$ be algebraically closed field, $S$ be a Henselization of a smooth variety over $k$ in a closed point, $\mathfrak{X}/S$
be smooth scheme over $S$ of relative dimension 1, admitting a good compactification.
Let $\mathcal{F}$ be a homotopy invariant presheaf with $\Omega$-transfers such that $n\mathcal{F}=0$, and $gcd(n,\text{char } k)=~1$.

\noindent Then for any two sections $x,y\colon S\to\mathfrak{X}$, such that $x$ and $y$ coincide in the closed point of $S$, two homomorphisms $\mathcal{F}(x),\mathcal{F}(y)\colon\mathcal{F}(\mathfrak{X})\to\mathcal{F}(S)$ coincide.
\end{thm}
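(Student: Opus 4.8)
The plan is to combine the factorisation of Lemma~\ref{6} through the relative Picard group with the rigidity of that group over the Henselian base $S$.

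\emph{Reduction via Lemma~\ref{6}.} For a section $s\colon S\to\mathfrak{X}$ write $\mathcal{F}(s)=s^{*}\colon\mathcal{F}(\mathfrak{X})\to\mathcal{F}(S)$. By Lemma~\ref{6} the pairing $s\otimes f\mapsto s^{*}(f)$ factors through $Pic(\overline{\mathfrak{X}},\mathfrak{X}_{\infty})\otimes\mathcal{F}(\mathfrak{X})$ by the rule $s\otimes f\mapsto [s(S)]\otimes f$. Applying this to the element $x-y$ of $\bigoplus_{s}\mathbb{Z}\cdot s$, the homomorphism $f\mapsto\mathcal{F}(x)(f)-\mathcal{F}(y)(f)$ is the composite of $f\mapsto\big([x(S)]-[y(S)]\big)\otimes f$ with the map $Pic(\overline{\mathfrak{X}},\mathfrak{X}_{\infty})\otimes\mathcal{F}(\mathfrak{X})\to\mathcal{F}(S)$. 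Since $n\mathcal{F}=0$, the group $\mathcal{F}(\mathfrak{X})$ is $n$-torsion, so $(nE)\otimes f=E\otimes nf=0$ for every $E\in Pic(\overline{\mathfrak{X}},\mathfrak{X}_{\infty})$ and every $f$; hence it suffices to prove that
\[
[x(S)]-[y(S)]\in n\cdot Pic(\overline{\mathfrak{X}},\mathfrak{X}_{\infty}).
\]

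\emph{Rigidity of the relative Picard group.} Write $X_{\infty}:=\mathfrak{X}_{\infty}\times_{S}0$. Because $x$ and $y$ coincide at the closed point $0$ of $S$, their restrictions $\overline{x}=\overline{y}\colon\text{Spec } k\to\overline{X}$ agree, so $[x(S)]-[y(S)]$ lies in the kernel of the reduction map $Pic(\overline{\mathfrak{X}},\mathfrak{X}_{\infty})\to Pic(\overline{X},X_{\infty})$ to the closed fibre. I would now use that the relative Picard functor of $\overline{\mathfrak{X}}/S$ with modulus $\mathfrak{X}_{\infty}$ is represented by a smooth commutative $S$-group scheme $G$: the ordinary relative Picard scheme is smooth and representable, since $\overline{\mathfrak{X}}/S$ is smooth projective of relative dimension $1$ and admits a section by Corollary~\ref{X infty structure}, and the modulus version is an extension of it by the split torus attached to the decomposition $\mathfrak{X}_{\infty}=\coprod S$ of Corollary~\ref{X infty structure}. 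As $S$ is local, one has $G(S)=Pic(\overline{\mathfrak{X}},\mathfrak{X}_{\infty})$, the group $G\times_{S}0$ recovers $Pic(\overline{X},X_{\infty})$ on $k$-points, and the reduction map above is induced by $G(S)\to(G\times_{S}0)(k)$. Since $(n,\text{char } k)=1$, multiplication by $n$ on $G$ has invertible differential at the identity, hence — by translation invariance — everywhere, so $[n]\colon G\to G$ is \'etale. Put $g=[x(S)]-[y(S)]\in G(S)$ and $H=G\times_{[n],G,g}S$; then $H\to S$ is \'etale and its closed fibre contains the origin of $G\times_{S}0$, because the reduction of $g$ is $0$. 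By the Henselian property of $S$ this $k$-point lifts to a section $S\to H$, i.e. to an element $h\in G(S)$ with $nh=g$. Thus $[x(S)]-[y(S)]=nh\in n\cdot Pic(\overline{\mathfrak{X}},\mathfrak{X}_{\infty})$, and by the previous step $\mathcal{F}(x)=\mathcal{F}(y)$.

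The main obstacle is this last step: one must either develop the relative Picard scheme with modulus and verify its smoothness together with the identification of its $S$- and closed-fibre points with the relative Picard groups of Lemma~\ref{6}, or invoke the equivalent rigidity statement — that the kernel of reduction to the closed fibre is uniquely $n$-divisible for $n$ prime to $\text{char } k$ — from Suslin--Voevodsky~\cite[\S~2]{Suslin Voevodsky}. Granting that input, the rest (the factorisation through $Pic(\overline{\mathfrak{X}},\mathfrak{X}_{\infty})$, the $n$-torsion bookkeeping, and the Hensel lift along $[n]$) is routine.
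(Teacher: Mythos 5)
Your first step coincides with the paper's: the pairing of Lemma~\ref{6} together with $n\mathcal{F}=0$ reduces the theorem to showing that $[x(S)]-[y(S)]$ dies in $Pic(\overline{\mathfrak{X}},\mathfrak{X}_{\infty})/n$, and your observation that $x(S)$, $y(S)$ have equal image in $Pic(\overline{X},X_{\infty})$ because $x$ and $y$ agree at the closed point is exactly what the paper uses as well. Where you diverge is in how rigidity of the relative Picard group is obtained. The paper does not touch representability at all: it invokes the Suslin--Voevodsky embedding $Pic(\overline{\mathfrak{X}},\mathfrak{X}_{\infty})/n\subseteq H^2_{et}(\overline{\mathfrak{X}},j_!\mu_n)$ (their 2.2) and the proper base change theorem to identify this with $H^2_{et}(\overline{X},(j_0)_!\mu_n)$, which yields an injection $Pic(\overline{\mathfrak{X}},\mathfrak{X}_{\infty})/n\hookrightarrow Pic(\overline{X},X_{\infty})/n$ and finishes the proof in two lines. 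Your route instead asks for $n$-divisibility of the kernel of reduction, proved by representing the modulus Picard functor by a smooth commutative $S$-group (or algebraic space) $G$, noting $[n]\colon G\to G$ is \'etale when $(n,\mathrm{char}\,k)=1$, and lifting along $[n]$ over the Henselian base. That is a classical, legitimate alternative (it is in the spirit of Suslin's original generalized-Jacobian argument), and the Hensel-lifting and \'etaleness steps you sketch are sound.

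The genuine gap is the input you yourself flag and then try to discharge by citation: the representability and smoothness of the rigidified (modulus) Picard functor, together with the identifications $G(S)=Pic(\overline{\mathfrak{X}},\mathfrak{X}_{\infty})$ and $(G\times_S 0)(k)=Pic(\overline{X},X_{\infty})$, are nowhere established, and they are not routine one-liners: the fibres of $\overline{\mathfrak{X}}/S$ need not be integral, so Grothendieck's representability theorem does not apply directly (one needs Artin's algebraic-space version or a rigidification argument), and equating sheaf-theoretic sections with actual relative Picard groups uses the vanishing of the relevant cohomological obstructions over the strictly Henselian base. Moreover, the fallback reference is inaccurate: Suslin--Voevodsky \S 2 does not state that the kernel of reduction to the closed fibre is (uniquely) $n$-divisible; what it provides is precisely the cohomological embedding quoted above, and extracting rigidity from it requires the proper base change step --- i.e.\ exactly the paper's argument, which bypasses divisibility of the kernel altogether. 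So as written your proof is incomplete at its pivotal step; it can be completed either by carrying out the representability/smoothness/point-identification program for the modulus Picard scheme, or by replacing that whole block with the $H^2_{et}(-,j_!\mu_n)$ plus proper base change argument, in which case you recover the paper's proof.
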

\begin{proof}
Consider a pairing $\bigoplus\limits_{s\colon S\rightarrow \mathfrak{X}} \mathbb{Z}\cdot s\otimes \mathcal{F}(\mathfrak{X})\to\mathcal{F}(S)$,
defined by the rule
$$s~\otimes~\alpha\mapsto~\mathcal{F}(s)(\alpha).$$
Since $n\mathcal{F}=0$ and lemma \ref{6}, this pairing can be decomposed in the following way:
$$\xymatrix{
    \bigoplus\limits_{s\colon S\rightarrow \mathfrak{X}} \mathbb{Z}\cdot s\otimes \mathcal{F}(\mathfrak{X}) \ar[rd]_{} \ar[rr]^{} &&  \mathcal{F}(S)\\
                        & Pic(\overline{\mathfrak{X}},\mathfrak{X}_{\infty})/n\otimes \mathcal{F}(\mathfrak{X})\ar[ru]
}
$$
So, it is enough to prove that elements, defined by the divisors $x(S)$ and $y(S)$ coincide in $Pic(\overline{\mathfrak{X}},\mathfrak{X}_{\infty})/n$.
Here our proof is similar to the proof (4.3 \cite{Suslin Voevodsky}).
By  (2.2 \cite{Suslin Voevodsky}) $$Pic(\overline{\mathfrak{X}},\mathfrak{X}_{\infty})/n\subseteq H^2_{et}(\overline{\mathfrak{X}},j_{!}(\mu_n))$$
where $\mu_n$ is the group of $n$-roots of unity.
By proper base change theorem, $$H^2_{et}(\overline{\mathfrak{X}},j_{!}(\mu_n))=H^2_{et}(\overline{X},(j_0)_{!}(\mu_n))$$
where $\overline{X}$ and $X$ are fibers of $\overline{\mathfrak{X}}$ and $\mathfrak{X}$ over the closed point of $S$, and $j_0\colon X\hookrightarrow\overline{X}$ is the corresponding embedding.
Consider the commutative diagram:
$$\xymatrix{
        Pic(\overline{\mathfrak{X}},\mathfrak{X}_{\infty})/n\ar[rr]\ar[d] && H^2_{et}(\overline{\mathfrak{X}},j_{!}(\mu_n))\ar[d]\\
        Pic(\overline{X},X_{\infty})/n\ar[rr] && H^2_{et}(\overline{X},(j_0)_{!}(\mu_n))
}
$$
Then $Pic(\overline{\mathfrak{X}},\mathfrak{X}_{\infty})/n\subseteq Pic(\overline{X},X_{\infty})/n$.
Since $x$ and $y$ coincide in the closed point,  the images of $x(S)$ and $y(S)$ coincide in $Pic(\overline{X},X_{\infty})/n$. Then $x(S)$ and $y(S)$ coincide in
$Pic(\overline{\mathfrak{X}},\mathfrak{X}_{\infty})/n$.
This concludes the proof.
\end{proof}

\begin{cor}
Let $k$ be algebraically closed field, $Y$ be a smooth variety, $\mathcal{O}$ be a local ring of a closed point of $Y$ in \'etale topology and $S=\text{Spec } \mathcal{O}$. Let $\mathcal{F}$ be a homotopy invariant
presheaf with $\Omega$-transfers and $n\mathcal{F}=0$, where $(n, char \ k)=~1$. Then the homomorphism $\mathcal{F}(\imath)\colon\mathcal{F}(S)\to\mathcal{F}(k)$ is an isomorphism. where $\imath\colon\text{Spec } k \to S$
is the embedding of the closed point.
\end{cor}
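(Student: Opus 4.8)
The plan is to deduce the corollary from the theorem by induction on $d=\dim Y$. Since $k$ is algebraically closed, the residue field of $\mathcal{O}$ is $k$, so the $k$-algebra structure of $\mathcal{O}$ provides a structure morphism $\sigma\colon S\to\text{Spec } k$ with $\sigma\circ\imath=\mathrm{id}_{\text{Spec } k}$; hence $\mathcal{F}(\imath)\circ\mathcal{F}(\sigma)=\mathrm{id}_{\mathcal{F}(k)}$ and $\mathcal{F}(\imath)$ is automatically a split surjection with one-sided inverse $\mathcal{F}(\sigma)$. It therefore suffices to prove the opposite relation $\mathcal{F}(\sigma)\circ\mathcal{F}(\imath)=\mathrm{id}_{\mathcal{F}(S)}$, and for $d=0$ this is trivial because then $S=\text{Spec } k$.

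Assume the corollary in all base dimensions $<d$ and fix $\alpha\in\mathcal{F}(S)$. Writing $S$ as the cofiltered limit of smooth affine étale neighbourhoods of $y$ and using Definition~\ref{def_of_presheaf_on_proobjects}, I can find such a neighbourhood $U$ with a distinguished point $u\in U$ over $y$ (so $k(u)=k$), a canonical map $\phi_U\colon S\to U$ sending the closed point $0$ of $S$ to $u$, and $\alpha_U\in\mathcal{F}(U)$ with $\phi_U^*(\alpha_U)=\alpha$. After shrinking $U$ I would choose an elementary fibration $q\colon U\to V$ through $u$ (Artin's good neighbourhoods): that is, $q$ smooth of relative dimension $1$ onto a smooth affine $k$-variety $V$ with $\dim V=d-1$, $q(u)=v$, $k(v)=k$, extending to a smooth projective $\overline{q}\colon\overline{U}\to V$ whose complement $\overline{U}\setminus U$ is finite étale and surjective over $V$; such a fibration exists over an algebraically closed field, and the surjectivity of the complement forces $q$ to be affine. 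Setting $S_V:=\text{Spec }\mathcal{O}^{et}_{V,v}$, $\rho\colon S\to S_V$ the map induced by $q$ (so $\rho(0)=v$ and $\sigma_V\circ\rho=\sigma$, where $\sigma_V\colon S_V\to\text{Spec } k$ is the structure map), and $\mathfrak{X}:=U\times_V S$ (the fibre product over $q$ and $q\circ\phi_U$), base change of $\overline{q}$ along $q\circ\phi_U$ exhibits $\mathfrak{X}$ as a smooth affine curve over the étale-local base $S$ admitting a good compactification (Definition~\ref{def_of_good_compactification}); the hypotheses $n\mathcal{F}=0$ and homotopy invariance pass to this situation unchanged, so the theorem applies to $\mathfrak{X}/S$.

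Let $\tilde{u}\colon S_V\to U$ be a $V$-section of $q$ with $\tilde{u}(v)=u$, which exists because $q$ is smooth, $S_V$ is Henselian local and $u$ is a smooth $k$-point of the fibre $q^{-1}(v)$. On $\mathfrak{X}=U\times_V S$ consider the two $S$-sections $\delta=(\phi_U,\mathrm{id}_S)$ and $\eta=(\tilde{u}\circ\rho,\mathrm{id}_S)$; both carry $0$ to the point $(u,0)$ of $\mathfrak{X}$, so the theorem yields $\mathcal{F}(\delta)=\mathcal{F}(\eta)$. Applying this to $\beta:=\mathrm{pr}_U^*(\alpha_U)\in\mathcal{F}(\mathfrak{X})$ and using $\mathrm{pr}_U\circ\delta=\phi_U$, $\mathrm{pr}_U\circ\eta=\tilde{u}\circ\rho$ gives
\[
\alpha=\phi_U^*(\alpha_U)=\delta^*(\beta)=\eta^*(\beta)=\rho^*\bigl(\tilde{u}^*(\alpha_U)\bigr).
\]
Put $\mu:=\tilde{u}^*(\alpha_U)\in\mathcal{F}(S_V)$. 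Since $\tilde{u}$ composed with the closed point $\imath_V\colon\text{Spec } k\to S_V$ is the point $u\colon\text{Spec } k\to U$, and $u=\phi_U\circ\imath$, we get $\mathcal{F}(\imath_V)(\mu)=u^*(\alpha_U)=\mathcal{F}(\imath)(\alpha)$; by the inductive hypothesis $\mathcal{F}(\imath_V)$ is an isomorphism with inverse $\mathcal{F}(\sigma_V)$, so $\mu=\mathcal{F}(\sigma_V)(\mathcal{F}(\imath)(\alpha))$ and hence
\[
\alpha=\rho^*(\mu)=\mathcal{F}(\sigma_V\circ\rho)\bigl(\mathcal{F}(\imath)(\alpha)\bigr)=\mathcal{F}(\sigma)\bigl(\mathcal{F}(\imath)(\alpha)\bigr).
\]
As $\alpha$ was arbitrary, $\mathcal{F}(\sigma)\circ\mathcal{F}(\imath)=\mathrm{id}_{\mathcal{F}(S)}$, closing the induction. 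I expect the one genuinely non-formal ingredient to be the existence of the elementary fibration $q$ with the required good compactification, in the precise sense of Definition~\ref{def_of_good_compactification}; everything else is bookkeeping with fibre products, Henselian lifting of points along smooth morphisms, and the theorem just proved.
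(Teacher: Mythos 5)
Your proposal is correct and follows essentially the same route as the paper: realize $\mathcal{F}(S)$ as a colimit over affine \'etale neighbourhoods, shrink to an elementary fibration (the paper cites Panin--Stavrova--Vavilov~3.3 where you invoke Artin's good neighbourhoods), base change to get a relative curve with good compactification over $S$, compare the canonical section with one factoring through the henselized $(d-1)$-dimensional base via Henselian lifting, and conclude by the rigidity theorem and induction on dimension. The only differences are cosmetic: the paper first identifies $S$ with the \'etale local ring of $\mathbb{A}^l$ at the origin and phrases the induction as surjectivity of $\mathcal{F}(\pi_l)$, whereas you keep $Y$ general and verify both composites are the identity.
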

\begin{proof}
Let $l$ denote the dimension of $Y$. Then $S$ is isomorphic to $S_l=\text{Spec } \mathcal{O}_{A_k^l,0}^{\text{\'et}}$. So, it is sufficient to prove that $\mathcal{F}(\imath)\colon\mathcal{F}(S_l)\to\mathcal{F}(\text{Spec } k)$ is
an isomorphism.
Consider the projection $\pi_l\colon S_l\to\text{Spec } k$.
Then we get that $\mathcal{F}(\imath)\circ\mathcal{F}(\pi_l)=id_{\mathcal{F}(\text{Spec } k)}$. Then $\mathcal{F}(\pi_l)$ is injective. Let us check that $\mathcal{F}(\pi_l)$ is surjective. We will prove it by induction on $l$.

\noindent The case $l=0$ is trivial.

\noindent We prove the induction step.
Since any \'etale neighbourhood of $0$ in $\mathbb{A}^l$ contains an affine \'etale neighbourhood, we have that $\mathcal{F}(S_l)=colim \ \mathcal{F}(W),$ where $W$ runs over all affine \'etale neighbourhoods
$(W,w_0)\to(\mathbb{A}^l,0).$
For any affine \'etale neighbourhood $W$ we will find a morphism $\pi:S_l\to S_{l-1}$ and a decomposition of the canonical map
$$\xymatrix{
\mathcal{F}(W)\ar[rr]^{can}\ar[dr] && \mathcal{F}(S_l)\\
& \mathcal{F}(S_{l-1})\ar[ur]_{\mathcal{F}(\pi)}
}\eqno{(*)}
$$
By the induction hypothesis, $\mathcal{F}(\text{Spec } k)\to\mathcal{F}(S_{l-1})$ is surjective, so this decomposition implies that the image of $\mathcal{F}(W)$ in $\mathcal{F}(S_l)$ is contained in the image of the
morphism $\mathcal{F}(\pi_l).$
Then $\mathcal{F}(\pi_l)$ is surjective.

Now we will construct the morphism $\pi$ and the decomposition $(*).$

\noindent By proposition~\cite[3.3]{Panin Stavrova Vavilov}
there is an open neighbourhood $U$ of $w_0$ in $W$, an open embedding $U\hookrightarrow \overline{U}$,
and the commutative diagram, where $Y$ denotes $\overline{U}\setminus U$
$$\xymatrix{
    U\ar[rr]^{open}\ar[drr]^{p} && \overline{U}\ar[d]^{\overline{p}} && Y\ar[ll]\ar[dll]_{q}\\
                        && U'
}
$$
such that
\begin{itemize}
\item $U'$ is an open subscheme of $\mathbb{P}^{l-1}_k$,
\item $\overline{p}$ is smooth projective of relative dimension 1
\item $q$ is \'etale and finite.
\end{itemize}

For any \'etale morphism $(V,v_0)\to (U',p(w_0))$ consider the fiber product diagram:

$$\xymatrix{
X\ar[rr]^{\varphi}\ar[d]^{\alpha} && S_l\ar[d]^{p\circ can}\\
         V\ar[rr]^{\text{\'et}} && U'
}
$$
Since $S_l$ is local, there is a unique closed point $x_0$ in $X$ lying over $v_0$ in $V$.
Since $\varphi$ is \'etale and $\varphi(x_0)=0$, lemma~\cite[4.2 \S 4]{Milne} implies that there is a section $\chi:S_l\to X$ sending $0$ to $x_0.$
Thus we get a morphism $\alpha\circ\chi\colon (S_l,0)\to (V,v_0).$
Since $S_{l-1}$ is a limit of a filtered system of \'etale neighbourhoods $(V,v_0)$ of $(A^l,0)$,
we obtain a morphism $\pi\colon S_l\to S_{l-1}$ and the following commutative diagram:
$$\xymatrix{
S_l\ar[rr]^{can}\ar[d]^{\pi} && U\ar[d]^{p}\\
S_{l-1}\ar[rr]^{can} && U'
}
$$

Denote by $f$ the composition $S_l\stackrel{\pi}\to S_{l-1}\to U'$.
Consider the fiber product:
$$\xymatrix{
   \mathfrak{X}\ar[rr]\ar[d] && U\ar[d]\\
         S_l\ar[rr]^{f} && U'
}
$$
We check that $\overline{\mathfrak{X}}=\overline{U}\times_{U'}S_l$ is a good compactification of $\mathfrak{X}$. Since
$\overline{\mathfrak{X}}\setminus\mathfrak{X}=(\overline{U}\setminus U)\times_{U'} S_l=Y\times_{U'} S_l$ and $Y$ is \'etale over $U'$, we have that
$\overline{\mathfrak{X}}\setminus\mathfrak{X}$ is \'etale over $S_l$.

\noindent So we can apply the rigidity theorem to $\mathfrak{X}$.
We construct two sections $s_1,s_2\colon S_l\to\mathfrak{X}$ as follows

First section, $s_1$ arises by universal property in the diagram:
$$\xymatrix{
  S_l\ar[rrrd]^{can}\ar@{=}[rdd]\ar@{-->}[rd]^{s_1}\\
   & \mathfrak{X}\ar[rr]\ar[d] && U\ar[d]\\
   &      S_l\ar[rr] && U'
}
$$
where $S_l\to U$ is the canonical map.

 To construct the second section, $s_2$, let us prove that there is a section
$$\xi\colon S_{l-1}\to S_l\text{ of the morphism }\pi\colon S_l\to S_{l-1}$$
By its construction $\pi$ is smooth of relative dimension 1. Then $\pi$ can be presented as a composition of \'etale morphism and a projection:
$$\pi\colon S_l\stackrel{\text{\'et}}{\rightarrow}S_{l-1}\times\mathbb{A}^1\to S_{l-1}$$
Consider the following Cartesian square:
$$\xymatrix{
 Z\ar[rr]^{g}\ar[d]^{r} && S_{l-1}\ar[d]^{i_0}\\
      S_l\ar[rr]^{et} && S_{l-1}\times\mathbb{A}^1
}
$$
where $i_0$ is an embedding as a zero section $S_{l-1}\times {0}$.
Since base change preserves \'etaleness, we have that $g\colon Z\to S_{l-1}$ is \'etale
Then by universal property of $S_{l-1}$ we get $h\colon S_{l-1}\to Z$ a section of $g$. Now take $\xi=r\circ h\colon S_{l-1}\to Z\to S_l$. Note that $\xi$ is a section of $\pi$.

\noindent Now define $s_2$ as a morphism given by the universal property in the diagram:
$$\xymatrix{
  S_l\ar[rrrd]^{f}\ar@{=}[rdd]\ar@{-->}[rd]^(.6){s_2}\\
   & \mathfrak{X}\ar[rr]\ar[d] && U\ar[d]\\
   &      S_l\ar[rr] && U'
}
$$
where $f\colon S_l\to U$ is defined as the composition $f\colon S_l\stackrel{\pi}{\rightarrow}S_{l-1}\stackrel{\xi}{\rightarrow}S_l\stackrel{can}{\rightarrow}U$.
It is easy to see that the sections $s_1$ and $s_2$ coincide in the closed point of $S_l$. Then theorem 1 implies that
$\mathcal{F}(s_1)=\mathcal{F}(s_2)\colon\mathcal{F}(\mathfrak{X})\to\mathcal{F}(S_l)$. Note that the image of the canonical morphism $\mathcal{F}(W)\to\mathcal{F}(S_l)$ equals to the composition:
$$\xymatrix{
   \mathcal{F}(W)\ar[r] & \mathcal{F}(U)\ar[r] & \mathcal{F}(\mathfrak{X})\ar[r]^{\mathcal{F}(s_1)} & \mathcal{F}(S_l).
}
$$
Since $\mathcal{F}(s_1)=\mathcal{F}(s_2)$, this composition equals
$$\xymatrix{
   \mathcal{F}(W)\ar[r] & \mathcal{F}(U)\ar[r] & \mathcal{F}(S_l)\ar[r]^{\mathcal{F}(\xi)}& \mathcal{F}(S_{l-1})\ar[r]^{\mathcal{F}(\pi)} & \mathcal{F}(S_l).
}
$$
So, the image of $\mathcal{F}(W)$ is contained in the image of $\mathcal{F}(S_{l-1})$. Then, by induction, it is contained in the image of $\mathcal{F}(Spec \ k)$.
So $\mathcal{F}(\pi_l)\colon\mathcal{F}(\text{Spec } k)\to \mathcal{F}(S_l)$ is an isomorphism. Consequently, $\mathcal{F}(\imath)\colon\mathcal{F}(S_l)\to\mathcal{F}(\text{Spec } k)$ is an isomorphism.
\end{proof}

\section{Examples of $\Omega$-presheaves}\label{examples}
In this section we prove that the K-functor and algebraic cobordism $\Omega^*$, defined by M. Levine and F. Morel in \cite{Levine Morel}
give the examples of $\Omega-$presheaves.

\paragraph{ K-theory.}

Consider a morphism $\alpha\in Hom_{\mathcal{C}}(X,Y)$ represented by the triple $(X\stackrel{f}\leftarrow Z\stackrel{g}\to Y)$
Since $Y\in\text{\bf{Sm}}_k$, we have that $g$ has finite Tor dimension and there is the pullback map $K'_*(Y)\to K'_*(Z)$. (\cite{Quillen}, 7.2.5) Since $f$ is finite, there is a pushforward map
$f_*\colon K'_*(Z)\to K'_*(X)$ (\cite{Quillen}, 7.2.7) Then we are able to give the following
\begin{dfn}
For the morphism $\alpha\in Hom_{\mathcal{C}}(X,Y)$ represented by the triple $(X\stackrel{f}\leftarrow Z\stackrel{g}\to Y)$ define $\alpha^*\colon K_*(Y)\to K_*(X)$ as the following
composition:
$$\alpha^*\colon K_*(Y)=K_*'(Y)\stackrel{g^*}\to K_*'(Z)\stackrel{f_*}\to K_*'(X)=K_*(X).$$
\end{dfn}

\begin{lem}
Let $\alpha\in\mathcal{C}(X,Y)$ and $\beta\in\mathcal{C}(Y,W)$. Then $(\beta\circ\alpha)^*=\beta^*\circ\alpha^*$.
\end{lem}
\begin{proof}
It is sufficient to prove the statement for $\alpha$ and $\beta$ defined by elementary triples.
Suppose $\alpha$ is defined by $(X\stackrel{f_1}\leftarrow Z_1\stackrel{g_1}\to Y)$ and $\beta$ is defined by $(Y\stackrel{f_2}\leftarrow Z_2\stackrel{g_2}\to W)$. We have a
commutative diagram defining the composition $\beta\circ\alpha$
$$\xymatrix{
  && Z_1\times_Y Z_2\ar[ld]_{p_1}\ar[rd]^{p_2} \\
  &Z_1\ar[ld]_{f_1}\ar[rd]^{g_1} && Z_2\ar[ld]_{f_2}\ar[rd]^{g_2} \\
  X && Y && W
}
$$
Since $f_2$ is a flat morphism, then $f_2\colon Z_2\to Y$ and $g_1\colon Z_1\to Y$ are Tor~-independent. Then by proposition 7.2.11 of \cite{Quillen} we have $g_1^*f_{2*}=h_{1*}h_2^*$.
Then
$$\beta^*\circ\alpha^*=f_{1*}g_1^*f_{2*}g_2^*=f_{1*}h_{1*}h_2^*g_2^*=(\beta\circ\alpha)^*.$$
\end{proof}
Since $K'_*$ is additive with respect to disjoint unions(\cite{Quillen},2.8), and homotopy invariant (\cite{Quillen},  7.4.1) we get
\begin{prop}
For any $X\in \text{\bf{Sm}}_k$, $\alpha\in Mor(\mathcal{C})$
the assignment $X\mapsto K_*(X)$, $\alpha\mapsto\alpha^*$ gives us the homotopy invariant presheaf with $\Omega$-transfers
$K_*\colon\mathcal{C}^{op}\to \text{\bf{Ab}}$

\end{prop}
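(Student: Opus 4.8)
The plan is to assemble the proposition from what has already been recorded: functoriality of $\alpha\mapsto\alpha^*$ (the preceding lemma), additivity of $K'_*$ for disjoint unions (\cite[2.8]{Quillen}), and homotopy invariance of $K'_*$ (\cite[7.4.1]{Quillen}). The only genuinely new verifications are that $\alpha\mapsto\alpha^*$ is well defined as a map out of $\mathrm{Hom}_{\mathcal{C}}(X,Y)$ — i.e. that it factors through equivalence classes of triples and then through the Grothendieck group — and that the resulting functor satisfies the extra conditions in Definition~\ref{def_of_omega_presheaf} and Definition~\ref{def_of_homotopy_invariance}.

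First I would check that $\alpha^*$ depends only on the equivalence class of the triple $(X\xleftarrow{f}Z\xrightarrow{g}Y)$: if $\beta\colon Z\to Z'$ is an isomorphism with $f=f'\circ\beta$ and $g=g'\circ\beta$, then on $K'$-theory $g^*=\beta^*\circ(g')^*$ and $f_*=(f')_*\circ\beta_*$, while $\beta_*\circ\beta^*=\mathrm{id}$ since $\beta$ is an isomorphism, so $f_*g^*=(f')_*(g')^*$. Next I would check additivity with respect to sums of triples: for $(X\xleftarrow{f\coprod f'}Z\coprod Z'\xrightarrow{g\coprod g'}Y)$ both $g^*$ and $f_*$ decompose along the identification $K'_*(Z\coprod Z')=K'_*(Z)\oplus K'_*(Z')$ of \cite[2.8]{Quillen}, so the composite is the sum of the composites attached to the two summands. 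Hence $T(X,Y)\to\mathrm{Hom}_{\text{\bf{Ab}}}(K_*(Y),K_*(X))$ is a homomorphism of commutative monoids carrying the zero triple $X\leftarrow\emptyset\to Y$ to $0$, and therefore extends uniquely to a homomorphism on the Grothendieck group $\mathrm{Hom}_{\mathcal{C}}(X,Y)=G(T(X,Y))$.

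With $\alpha\mapsto\alpha^*$ now defined on all of $\mathcal{C}$, the preceding lemma gives $(\beta\circ\alpha)^*=\beta^*\circ\alpha^*$, and the triple $(X\xleftarrow{\mathrm{id}}X\xrightarrow{\mathrm{id}}X)$ visibly yields $\mathrm{id}_{K_*(X)}$, so $K_*\colon\mathcal{C}^{op}\to\text{\bf{Ab}}$ is a functor; it is additive because the extension constructed above is $\mathbb{Z}$-linear on hom-groups. For the presheaf condition, the canonical map $K_*(X\coprod Y)\to K_*(X)\oplus K_*(Y)$ is $(\imath_X^*,\imath_Y^*)$ for the clopen inclusions $\imath_X,\imath_Y$, which as morphisms of $\mathcal{C}$ are the triples with $f=\mathrm{id}$; on $K'$-theory this is exactly the isomorphism of \cite[2.8]{Quillen}. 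Finally, the projection $p\colon X\times\mathbb{A}^1_k\to X$, viewed in $\mathcal{C}$, has $f=\mathrm{id}$ and $g=p$, so $\mathcal{F}(p)=p^*\colon K'_*(X)\to K'_*(X\times\mathbb{A}^1_k)$, which is an isomorphism by \cite[7.4.1]{Quillen}; this is the homotopy invariance of Definition~\ref{def_of_homotopy_invariance}.

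The argument is essentially bookkeeping, so there is no deep obstacle; the one point demanding care is that all the $K'$-theoretic input is genuinely applicable. Concretely, $g^*$ is legitimate only because $g$ has finite Tor-dimension, which holds here since $Y$ is smooth over $k$ so $g$ is l.c.i. by Remark~\ref{remark}, and $f_*$ is legitimate only because $f$ is finite; moreover one must make sure these are exactly the pullback and pushforward for which \cite{Quillen} proves the projection (base-change) formula $g^*f_{2*}=h_{1*}h_2^*$ used in the lemma, additivity, and homotopy invariance. Once these hypotheses are matched to Quillen's statements, every assertion of the proposition follows by combining them.
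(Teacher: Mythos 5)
Your proposal is correct and follows essentially the same route as the paper, which deduces the proposition from the composition lemma together with Quillen's additivity (\cite[2.8]{Quillen}) and homotopy invariance (\cite[7.4.1]{Quillen}). You merely spell out the routine verifications (independence of the representing triple, additivity over sums of triples, and the extension through the Grothendieck group) that the paper leaves implicit.
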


\paragraph{ Algebraic cobordism.}

Consider a morphism $\alpha\in Hom_{\mathcal{C}}(X,Y)$ represented by the triple $(X\stackrel{f}\leftarrow Z\stackrel{g}\to Y)$.
Denote $\dim X=d_1$ and $\dim Y=d_2$.
In \cite{Levine Morel} there is constructed an extension of algebraic cobordism theory $\Omega^*$ to the Borel-Moore homology $\Omega_*$ on $\text{\bf{Sch}}_k$.

\noindent As we have mentioned in remark \ref{remark}, $g$ is an l.c.i. morphism.
Then by definition 6.5.10(\cite{Levine Morel}) there is the pullback $g^*\colon\Omega_*(Y)\to\Omega_{*+d_1-d_2}(Z)$, where Since $f$ is finite, there is the pushforward map
$f_*\colon\Omega_*(Z)\to\Omega_*(X)$. Since $X$ and $Y$ are smooth, we have that $\Omega_*(X)=\Omega^{d_1-*}(X)$ and $\Omega_*(Y)=\Omega^{d_2-*}(Y)$.

\begin{dfn}
For $\alpha\in \mathcal{C}(X,Y)$ defined by the triple $(X\stackrel{f}\leftarrow Z\stackrel{g}\to Y)$ define the pullback map $\alpha^*\colon\Omega^*(Y)\to\Omega^*(X)$ as the composition
$$\Omega^*(Y)=\Omega_{d_2-*}(Y)\stackrel{g^*}\to\Omega_{d_1-*}(Z)\stackrel{f_*}\to\Omega_{d_1-*}(X)=\Omega^*(X).$$
\end{dfn}

\begin{lem}
Let $\alpha\in\mathcal{C}(X,Y)$ and $\beta\in\mathcal{C}(Y,W)$. Then $(\beta\circ\alpha)^*=\beta^*\circ\alpha^*$.
\end{lem}
\begin{proof}
It is sufficient to prove the statement for $\alpha$ and $\beta$ defined by elementary triples.
Suppose $\alpha$ is defined by $(X\stackrel{f_1}\leftarrow Z_1\stackrel{g_1}\to Y)$ and $\beta$ is defined by $(Y\stackrel{f_2}\leftarrow Z_2\stackrel{g_2}\to W)$. We have a
commutative diagram defining the composition $\beta\circ\alpha$

$$\xymatrix{
  && Z_1\times_Y Z_2\ar[ld]_{p_1}\ar[rd]^{p_2} \\
  &Z_1\ar[ld]_{f_1}\ar[rd]^{g_1} && Z_2\ar[ld]_{f_2}\ar[rd]^{g_2} \\
  X && Y && W
}
$$
Since $f_2$ is a flat morphism, then $f_2\colon Z_2\to Y$ and $g_1\colon Z_1\to Y$ are Tor~-independent. Then by theorem 6.5.12(\cite{Levine Morel}) we have $g_1^*f_{2*}=h_{1*}h_2^*$.
Then
$$\beta^*\circ\alpha^*=f_{1*}g_1^*f_{2*}g_2^*=f_{1*}h_{1*}h_2^*g_2^*=(\beta\circ\alpha)^*.$$
\end{proof}
Since $\Omega^*$ is additive with respect to disjoint unions(\cite{Levine Morel},2.4.13), and homotopy invariant (\cite{Levine Morel},  3.4.2) we get
\begin{prop}
For any $X\in \text{\bf{Sm}}_k$, $\alpha\in Mor(\mathcal{C})$
the assignment $X\mapsto\Omega^*(X)$, $\alpha\mapsto\alpha^*$ gives us the homotopy invariant presheaf with $\Omega$-transfers
$\Omega^*\colon\mathcal{C}^{op}\to~\text{\bf{Ab}}$

\end{prop}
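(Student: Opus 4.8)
The plan is to verify directly that the assignment $X \mapsto \Omega^*(X)$, $\alpha \mapsto \alpha^*$ meets the three requirements packaged in Definitions~\ref{def_of_omega_presheaf} and~\ref{def_of_homotopy_invariance}: that it is an additive functor $\mathcal{C}^{op} \to \text{\bf{Ab}}$, that the canonical map $\Omega^*(X \coprod Y) \to \Omega^*(X) \oplus \Omega^*(Y)$ is an isomorphism, and that $\Omega^*(p)$ is an isomorphism for every projection $p\colon X \times \mathbb{A}^1_k \to X$. The first of these is assembled from the composition law of the preceding lemma together with an additivity check, while the last two are direct invocations of the properties of algebraic cobordism recorded in \cite{Levine Morel}; the whole argument mirrors the treatment of $K$-theory given just above.

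First I would settle well-definedness, functoriality, and additivity. The rule $\alpha \mapsto \alpha^*$ descends to $T(X,Y)$: for equivalent triples $X \xleftarrow{f} Z \xrightarrow{g} Y$ and $X \xleftarrow{f'} Z' \xrightarrow{g'} Y$ with $\phi\colon Z \xrightarrow{\sim} Z'$ satisfying $f = f'\phi$ and $g = g'\phi$, one has $f_* g^* = f'_* \phi_* \phi^* {g'}^* = f'_* {g'}^*$ since $\phi_*\phi^* = id$ for an isomorphism. The triple $(X \xleftarrow{id} X \xrightarrow{id} X)$ gives $id_X^* = id$, and the preceding lemma shows that $\alpha \mapsto \alpha^*$ turns composition in $\mathcal{C}$ into composition of homomorphisms. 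For the sum triple $X \xleftarrow{f \coprod f'} Z \coprod Z' \xrightarrow{g \coprod g'} Y$, both the pullback $\Omega_*(Y) \to \Omega_*(Z \coprod Z')$ and the pushforward $\Omega_*(Z \coprod Z') \to \Omega_*(X)$ factor through the canonical splitting $\Omega_*(Z \coprod Z') \cong \Omega_*(Z) \oplus \Omega_*(Z')$ of \cite[2.4.13]{Levine Morel}, whence $(\alpha \coprod \alpha')^* = \alpha^* + (\alpha')^*$. Since composition of triples distributes over sums, the universal property of the Grothendieck group extends $\alpha \mapsto \alpha^*$ to a composition-preserving group homomorphism $Hom_{\mathcal{C}}(X,Y) = G(T(X,Y)) \to Hom_{\text{\bf{Ab}}}(\Omega^*(Y),\Omega^*(X))$, so $\Omega^*\colon \mathcal{C}^{op} \to \text{\bf{Ab}}$ is an additive functor. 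Applying the same splitting to the coprojections $X_j \hookrightarrow X_1 \coprod X_2$, viewed in $\mathcal{C}$ as triples with identity left leg, and using the identification $\Omega^*(-) = \Omega_{\dim(-)-*}(-)$ valid on smooth schemes, one gets that the canonical map $\Omega^*(X_1 \coprod X_2) \to \Omega^*(X_1) \oplus \Omega^*(X_2)$ is an isomorphism, so $\Omega^*$ is an $\Omega$-presheaf.

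Homotopy invariance is then immediate: a projection $p\colon X \times \mathbb{A}^1_k \to X$ is represented in $\mathcal{C}$ by $(X \times \mathbb{A}^1_k \xleftarrow{id} X \times \mathbb{A}^1_k \xrightarrow{p} X)$, so $\Omega^*(p) = p^*\colon \Omega^*(X) \to \Omega^*(X \times \mathbb{A}^1_k)$, which is an isomorphism by \cite[3.4.2]{Levine Morel}. I expect the only step requiring genuine care to be the bookkeeping accompanying the passage between the Borel--Moore theory $\Omega_*$, on which the transfers $f_* g^*$ are built, and the cohomological $\Omega^*$ used for the presheaf: one must keep track of the degree shifts in the additivity isomorphism (handling connected components of different dimension separately) and confirm that the $\Omega_*$-pullback along the open immersion $X_j \hookrightarrow X_1 \coprod X_2$ is literally the restriction map, so that the canonical map of Definition~\ref{def_of_omega_presheaf} coincides with the splitting of \cite[2.4.13]{Levine Morel}. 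Everything else is a formal manipulation with the functoriality already in hand.
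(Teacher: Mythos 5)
Your proposal is correct and follows essentially the same route as the paper: the composition law is the preceding lemma (via Tor-independence and Levine--Morel 6.5.12), and additivity and homotopy invariance are exactly the cited properties \cite[2.4.13, 3.4.2]{Levine Morel}. You simply spell out the well-definedness on equivalence classes, the Grothendieck-group extension, and the degree bookkeeping that the paper leaves implicit.
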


\paragraph{Acknowledgments} I am grateful to Ivan Panin and Kirill
Zainoulline for constant attention and useful suggestions concerning
the subject of this paper. The research was supported by the Ontario
Trillium Scholarship, NSERC grant 396100-2010 and the RFBR grant 12-01-33057.

\bibliographystyle{plain}

\end{document}